\theoremstyle{plain}
\newtheorem{nn}{}[section]
\newtheorem{theorem}[nn]{Theorem}
\newtheorem{lemma}[nn]{Lemma}
\newtheorem{proposition}[nn]{Proposition}
\newtheorem{remark}[nn]{Remark}
\newcommand{\F}{\mathcal{F}}
\newcommand{\R}{\mathbb{R}}
\newcommand{\Z}{\mathbb{Z}}
\newcommand{\intt}{\mathrm{int}}
\newcommand{\relintt}{\mathrm{relint}}
\newcommand{\M}{\mathcal{M}^3}
\newcommand{\vertt}{\mathrm{vert}}
\newcommand{\floor}[1]{\left\lfloor #1 \right\rfloor}
\newcommand{\area}{\mathrm{A}}
\newcommand{\rmcmd}[1]{\mathop{\mathrm{#1}}}
\newcommand{\integer}{\mathbb{Z}}
\newcommand{\cP}{\mathcal{P}}
\newcommand{\cPi}{\cP_{\rmcmd{i}}}
\newcommand{\cPif}[2]{\cP_{\rmcmd{if}}^{#2}(#1)}
\newcommand{\cPifm}[2]{\cP_{\rmcmd{ifm}}^{#2}(#1)}
\newcommand{\cPfmi}[2]{\cP_{\rmcmd{fmi}}^{#2}(#1)}
\newcommand{\cC}{\mathcal{C}}
\newcommand{\cCfm}[2]{\cC_{\rmcmd{fm}}^{#2}(#1)}
\newcommand{\aff}{\rmcmd{aff}}
\newcommand{\vx}{\rmcmd{vert}}
\newcommand{\rec}{\rmcmd{rec}}
\newcommand{\relbd}{\rmcmd{relbd}}
\newcommand{\relintr}{\rmcmd{relint}}
\newcommand{\conv}{\rmcmd{conv}}
\newcommand{\real}{\mathbb{R}}
\newcommand{\term}[1]{\emph{#1}}
\newcommand{\natur}{\mathbb{N}}
\newcommand{\Aff}{\rmcmd{Aff}}
\newcommand{\intr}{\rmcmd{int}}
\newcommand{\modulo}[1]{\, (\mathrm{mod} \, #1)}
\newcommand{\vol}{\rmcmd{vol}}
\newcommand{\lin}{\rmcmd{lin}}
\newcommand{\cX}{\mathcal{X}}
\newcommand{\ray}[1]{\left[#1\right>}
\newcommand{\thmcaption}[1]{\emph{(#1.)}}
\newcommand{\ceiling}[1]{\left\lceil#1\right\rceil}
\newcommand{\cR}{\mathcal{R}}
\newcommand{\card}[1]{\left|#1\right|}
\newcommand{\comment}[1]{}
\title{Maximal lattice-free polyhedra: \\
  finiteness and an explicit description in dimension three}
\author{Gennadiy Averkov \and Christian Wagner \and Robert
  Weismantel}
\begin{document}

\maketitle

\begin{abstract}
A convex set with nonempty interior is maximal lattice-free
if it is inclusion-maximal with respect to the property of
not containing integer points in its interior.
Maximal lattice-free convex sets are known to be polyhedra.
The precision of a rational polyhedron $P$ in $\R^d$ is
the smallest natural number $s$ such that $sP$ is an
integral polyhedron.
In this paper we show that, up to affine mappings
preserving $\Z^d$, the number of maximal lattice-free
rational polyhedra of a given precision $s$ is finite.
Furthermore, we present the complete list of all maximal
lattice-free integral polyhedra in dimension three.
Our results are motivated by recent research on cutting
plane theory in mixed-integer linear optimization.
\end{abstract}

\newtheoremstyle{itsemicolon}{}{}{\mdseries\rmfamily}{}{\itshape}{:}{ }{}
\newtheoremstyle{itdot}{}{}{\mdseries\rmfamily}{}{\itshape}{:}{ }{}
\theoremstyle{itdot}
\newtheorem*{msc*}{2010 MSC} 

\begin{msc*}
  52B20, 52C07, 90C10, 90C11
\end{msc*}

\newtheorem*{keywords*}{Keywords}

\begin{keywords*}
cutting plane, integral polyhedron, maximal lattice-free
set, mixed-integer programming 
\end{keywords*}


\section{Introduction}

A convex set $K \subseteq \R^d$ with nonempty interior is
called \emph{lattice-free} if the interior of $K$ does not
contain a point of $\Z^d$ and \emph{maximal lattice-free} if
$K$ is inclusion-maximal in the class of lattice-free convex
sets (for different definitions of lattice-freeness see, for
instance, \cite{Reznick86,Scarf85,Seboe99}).
Every maximal lattice-free set $K$ is a polyhedron with an
integer point in the relative interior of each facet of $K$
(see, for instance, \cite[Proposition~3.3]{Lovasz89}).

The study of maximal lattice-free polyhedra is motivated by
recent research in mixed-integer linear optimization.
Cutting planes for mixed-integer linear programs can be
obtained from a simultaneous consideration of several rows
of a simplex tableau (see, for instance,
\cite{anlowe2,AndersenLouveauxWeismantelWolsey07,AndersenWagnerWeismantel09,Balas71,BasuBonamiCornuejolsMargot09,boco,CornuejolsMargot08,DeyRichard08,DeyWolsey08,Espinoza08,Zambelli09}).
Such cutting planes are deducible from lattice-free convex
sets.
Furthermore, the strongest cutting planes are derived from
maximal lattice-free polyhedra. 
It is therefore natural to ask for a characterization of
maximal lattice-free polyhedra.
Since we aim at algorithmic applications, we restrict
considerations to the class of maximal lattice-free rational
polyhedra.
In this paper we answer the following two questions:
\begin{enumerate}[I.]
  \item Given the dimension $d \in \natur$ and the precision
    of a maximal lattice-free rational polyhedron $P$, how
    many different shapes are possible for $P$?
  \item How do maximal lattice-free integral polyhedra in
    dimension three look like? 
\end{enumerate}  
The answer to the first question is that $P$ can only have
finitely many shapes (a precise formulation will be given in
the following section).
In particular, we prove the following result.

\begin{theorem} \label{explicit.statement}
Let ${\cal I}^d$ denote the set of all lattice-free integral
polyhedra $P \subseteq \real^d$  such that $P$ is not
properly contained in another lattice-free integral
polyhedron.
Then there exists a constant $N$ depending only on $d$, and
polyhedra $P_1,\ldots,P_N \in {\cal I}^d$ such that for
every $P \in {\cal I}^d$ one has  $P = UP_j + v$ for some
unimodular matrix $U \in \Z^{d \times d}$, integral vector
$v \in \integer^d$, and $j \in \{1, \dots, N\}$.
\end{theorem}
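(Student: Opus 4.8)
\emph{Overall strategy.} The plan is to induct on the dimension $d$: first reduce to bounded polytopes by splitting off the lineality space, and then use the flatness theorem to show that a bounded maximal lattice-free integral polytope, after a unimodular transformation, fits into a box whose size is bounded in terms of $d$. I would begin by recording two properties of an arbitrary $P\in{\cal I}^d$. (i) $\rec(P)=\lin(P)=:L$, a rational subspace: if some primitive integral $r\in\rec(P)$ had $-r\notin\rec(P)$, then $P+\R r$ would be an integral polyhedron strictly containing $P$ and still lattice-free — a lattice point of $\intt(P)+\R r$, after subtracting a suitable integral multiple of $r$ and using that $r$ is a recession direction, would yield a lattice point of $\intt(P)$ — contradicting maximality. (ii) Every facet of $P$ contains a lattice point in its relative interior (again by maximality), so $P$ is a maximal lattice-free polyhedron in the usual sense and in particular has at most $2^d$ facets. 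Applying a unimodular transformation taking $L$ to $\spann(e_1,\dots,e_k)$, $k=\dim L$ (and $k\le d-1$ since $P\ne\R^d$), we get $P=\R^k\times Q$ with $Q\subseteq\R^{d-k}$ a full-dimensional \emph{bounded} polytope, and one checks that $Q\in{\cal I}^{d-k}$. If $k\ge 1$ the inductive hypothesis applies to $Q$; since there are finitely many choices of $k$ and unimodular maps of $\R^{d-k}$ extend to unimodular maps of $\R^d$ fixing $e_1,\dots,e_k$, this disposes of all $P$ with $L\ne\{0\}$, and it remains to treat the case $L=\{0\}$, i.e. $P$ bounded.

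\emph{Flattening.} Let $P\in{\cal I}^d$ be full-dimensional and bounded. By the flatness theorem there is a constant $w(d)$ such that every lattice-free convex body in $\R^d$ has lattice width at most $w(d)$. Choosing a primitive integral vector realizing this width for $P$, extending it to a unimodular map sending it to $e_d$, and translating by an integral vector, I may assume $P\subseteq\R^{d-1}\times[0,h]$ where $h$ is an integer with $1\le h\le\floor{w(d)}$ (an integer because $\min_P x_d$ and $\max_P x_d$ are attained at integral vertices). Since $P$ is a lattice polytope, each of its vertices has last coordinate in $\{0,\dots,h\}$; writing $V_j\subseteq\Z^{d-1}$ for the first $d-1$ coordinates of the vertices at height $j$, we have $P=\conv\bigl(\bigcup_{j=0}^{h}(V_j\times\{j\})\bigr)$, so $P$ is determined by $h$ and the sets $V_j$. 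Hence it suffices to show that, after one further unimodular change of the coordinates $x_1,\dots,x_{d-1}$ (lifted to $\R^d$ by the identity on $x_d$), the set $\bar P:=\conv\bigl(\bigcup_j V_j\bigr)\subseteq\R^{d-1}$ — equivalently the projection of $P$ forgetting the last coordinate — lies in a box $[-R(d),R(d)]^{d-1}$. Then the vertices of $P$ lie in the finite set $\{-R(d),\dots,R(d)\}^{d-1}\times\{0,\dots,h\}$, only finitely many $P$ are possible, and we pick one representative $P_j$ per equivalence class.

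\emph{Bounding the horizontal extent (the crux).} This last step is the main obstacle, since flatness only controls $P$ in the single direction $e_d$. I would proceed from the observation that if $y$ is a lattice point in the relative interior of $\bar P$ then the fibre $P\cap(\{y\}\times\R)$ cannot meet an integer height in its own relative interior — such a point would lie in $\intt(P)$ — so it is confined to one unit layer $\{y\}\times[\ell,\ell+1]$. Since the lower and upper endpoints of the fibres are respectively a convex and a concave function of $y$ on $\bar P$ with values in $[0,h]$, they have small gradient at points far from the relative boundary of $\bar P$, so over any large subregion of $\bar P$ the polytope $P$ is squeezed into a slab of bounded thickness. One then argues that a $\bar P$ which is large up to unimodular equivalence — i.e. has large lattice width in every direction — would leave room to enlarge $P$ inside ${\cal I}^d$, say by fattening a squeezed unit layer into a genuinely $d$-dimensional integral polyhedron still avoiding $\Z^d$ in its interior, contradicting maximality. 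A technically smoother variant, which I expect to be the cleaner one to carry out, is to turn the same squeezing estimate into a bound $\vol(P)\le V(d)$ and then invoke the known fact that, in fixed dimension, there are only finitely many lattice polytopes of bounded volume up to unimodular equivalence. Either way the horizontal bound follows, the induction closes, and every $P\in{\cal I}^d$ is, up to a unimodular matrix and an integral translation, one of finitely many polytopes $P_1,\dots,P_N$. The two reductions are routine; essentially all of the work is in this horizontal bound.
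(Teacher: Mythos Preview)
Your reduction to the bounded case is essentially the paper's Proposition~\ref{S=P+L}, and your closing remark that a volume bound together with the standard ``finitely many lattice polytopes of bounded volume up to $\Aff(\Z^d)$'' fact would finish is exactly the architecture the paper uses (Theorem~\ref{finite-volume} feeding into Theorem~\ref{vol-bound<=>finiteness}). So at the level of outline you and the paper agree.

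The gap is precisely where you locate it, and neither of your two sketches for the horizontal bound is close to a proof. For the first, you have not specified what integral polyhedron to adjoin to $P$ when $\bar P$ has large lattice width, nor why the result remains lattice-free; the heuristic that $\ell$ and $u$ have small gradient far from $\relbd\bar P$ does not by itself manufacture a concrete enlargement. For the second, knowing $f(y)\le 1$ only at lattice points of $\relint\bar P$, together with $0\le f\le h$ and concavity, does not bound $\int_{\bar P}f$: the projection $\bar P$ is \emph{not} lattice-free (indeed the paper's Lemma~\ref{projection non-free} shows it always contains an interior lattice point), so flatness gives no control on it, and iterating flatness in a second direction fails because $P$ may be thin in only one lattice direction.

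The paper fills the gap with an idea you do not have, and it is not the flatness theorem. It bounds the \emph{lattice diameter} of $P$: if $P$ contained $M+1$ collinear integer points, send that line to $\lin(e_d)$ and project. Inside $P'=\pi(P)$ the paper finds a special integral simplex $Q$ with vertex $o$ and exactly one relative interior lattice point $p$ (an element of the poset-minimal class $\cR^{d-1}(o)$, Lemma~\ref{properties of tight}). By Hensley's lower bound on barycentric coordinates (Theorem~\ref{hensley simplices with one point}), the coordinate of $p$ at $o$ is at least $\lambda^\ast(d,1)$, whence concavity of the fibre length yields $f(p)\ge\lambda^\ast(d,1)f(o)\ge\lambda^\ast(d,1)\cdot M$, contradicting $f(p)\le 1$ once $M>1/\lambda^\ast(d,1)$. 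This bounds the lattice diameter, hence $|P\cap\Z^d|$ by pigeonhole (Lemma~\ref{G bound}); then $P$ is enlarged to a polytope $Q\supsetneq P$ with $1\le|\intr(Q)\cap\Z^d|\le|P\cap\Z^d|$ and the Hensley/Lagarias--Ziegler volume bound (Theorem~\ref{hensley-finiteness}) is applied to $Q$. The substantive inputs you are missing are the $\cR^{d-1}(a)$ construction and Hensley's barycentric bound; I do not see how to replace them by a direct flatness-and-squeezing argument.
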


The proof of Theorem \ref{explicit.statement} suggests that
the constant $N$ grows rapidly in $d$.
Moreover, the proof does not imply any quick constructive
procedure for enumeration of the polyhedra
$P_1,\ldots,P_N$.
Having applications in mixed-integer cutting plane theory in
mind, it is thus desirable to provide  a precise
classification for small dimensions.
Notice that finite termination of a cutting plane algorithm
only requires cutting planes associated with lattice-free
integral polyhedra as derived in
Theorem~\ref{explicit.statement} (see
\cite{BasuCornuejolsMargot10,DeyLouveaux09,DelPiaWeismantel10}).
The explicit description of ${\cal I}^d$ in Theorem
\ref{explicit.statement} for $d = 1,2$ is folklore.
However, already the class ${\cal I}^3$ is rather complex.
Thus, the complete enumeration of ${\cal I}^d$ for an
arbitrary $d \ge 3$ is challenging.
We provide a classification of an important subclass of
${\cal I}^3$.

Theorem \ref{explicit.statement} follows directly from
Theorem \ref{finiteness} by setting $s = 1$.
The classification of a subclass of ${\cal I}^3$ is stated
in Theorem \ref{main.thm}.


\section{Main results and notation}

Let us first introduce the notation used in the formulations
of our main results.
(Introduction of the standard notation is postponed to the
end of this section.)
For the relevant background information in convex geometry,
in particular with respect to polyhedra and lattices, we
refer to the books
\cite{barvinok-2002,gruber-2007,gruber-lekkerkerker-1987,Rockafellar72}.

The intersection of finitely many closed halfspaces is said
to be a \term{polyhedron}.
By $\cP^d$ we denote the set of all polyhedra in $\real^d$
(where the elements of $\cP^d$ do not have to be
full-dimensional).
A bounded polyhedron is called a \term{polytope}.
A polyhedron $P \in \cP^d$ is said to be \term{integral} if
$P = \conv(P \cap \integer^d)$;
and $P$ is said to be \emph{rational} if $sP := \{sx \in
\real^d : x \in P\}$ is an integral polyhedron for some
finite integer $s \ge 1$.
The \emph{precision} of a rational polyhedron $P$ is the
smallest integer $s \ge 1$ such that $sP$ is an integral
polyhedron.

If $\Lambda$ is a lattice in $\real^d$, then a polyhedron $P
\in \cP^d$ is said to be \term{$\Lambda$-free} if $\intr(P)
\cap \Lambda = \emptyset$.
For $\Lambda=\integer^d$ we say ``lattice-free'' rather than
``$\Lambda$-free''.
In this paper, we restrict $\Lambda$ to be $s \integer^d$
for some $s \in \natur$. 

Our results are concerned with the interplay of the
following three properties of polyhedra:
integrality (abbreviated with ``i''),
$\Lambda$-freeness (abbreviated with ``f'' and an additional
``s'' in brackets to indicate the dependency on $\Lambda = s
\integer^d$), and inclusion-maximality in a given class
(abbreviated with ``m''). 
By $\cPi^d$ we denote the set of integral polyhedra
belonging to $\cP^d$, by $\cPif{s}{d}$ the set of
$\Lambda$-free polyhedra belonging to $\cPi^d$, and by
$\cPifm{s}{d}$ the set of elements of $\cPif{s}{d}$ which
are maximal within $\cPif{s}{d}$ with respect to inclusion.

Let $\Aff(\Lambda)$ denote the group of all affine 
transformations $T$ in $\real^d$ with $T(\Lambda) =
\Lambda$.
It is not hard to see that $\Aff(\Lambda) \subseteq
\Aff(\integer^d)$.
Henceforth, the transformations in $\Aff(\Lambda)$ are
called \term{$\Lambda$-preserving}, while the
transformations in $\Aff(\integer^d)$ are called
\emph{unimodular}.
If a set $P$ can be mapped to a set $Q$ by a
$\Lambda$-preserving transformation we simply say that both
sets are \emph{equivalent}.
The group $\Aff(\Lambda)$ has a natural action on $\cPi^d$.
Typically, we are interested in polyhedra in $\cPi^d$
identified modulo $\Aff(\Lambda)$, since this identification
does not change affine properties of integral polyhedra
relative to the lattice $\Lambda$.
In particular, two polyhedra $P,Q \in \cPi^d$ which coincide
up to an affine transformation in $\Aff(\Lambda)$ contain
the same number of lattice points in $\integer^d$ and
$\Lambda$ on corresponding faces.

Let us assume that $P \in \cP^d$ is a maximal lattice-free
rational polyhedron with precision $s$.
Thus, $sP$ is an integral polyhedron and the maximality and
lattice-freeness of $P$ with respect to the standard lattice
$\integer^d$ transfers one-to-one into a maximality and
$\Lambda$-freeness of $sP$ with respect to the lattice
$\Lambda = s \integer^d$.
Thus, instead of analyzing ``maximal lattice-free rational
polyhedra''~(which correspond to cutting planes when
rational data is assumed) we can equivalently consider the
more convenient set of ``maximal $\Lambda$-free integral
polyhedra''.
Indeed, from an analytical point of view, the latter set is
easier to handle since results from the literature can be
used which are stated in terms of integral polyhedra.
We are now ready to present our first main result.

\begin{theorem} \label{finiteness}
Let $d,s \in \natur$.
Then $\cPifm{s}{d} / \Aff(\Lambda)$ is a finite set.
\end{theorem}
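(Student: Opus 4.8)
The plan is to show that every $P \in \cPifm{s}{d}$ lies in a bounded region of $\real^d$ after an appropriate $\Lambda$-preserving normalization, and then invoke the fact that a bounded region of $\real^d$ contains only finitely many integral polyhedra up to translation by $\Lambda$. Recall that $P$ is an integral polyhedron, lattice-free with respect to $\Lambda = s\integer^d$, and inclusion-maximal with that property among integral polyhedra. First I would recall the structural facts about maximal lattice-free sets: the recession cone $\rec(P)$ is a rational polyhedral cone, $P$ decomposes as the Minkowski sum of a polytope and $\rec(P)$, and $P$ has an integer point of $\Lambda$ — hence of $\integer^d$ — in the relative interior of each facet (by the cited \cite[Proposition~3.3]{Lovasz89}, transferred to the lattice $\Lambda$). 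The key point is that although a maximal $\Lambda$-free polyhedron may be unbounded, its "shape" is still rigid: the lineality space $\lin(\rec(P))$ must be a rational subspace which can be assumed, after applying an element of $\Aff(\Lambda)$, to be a coordinate subspace $\real^k \times \{0\}$, and modulo this lineality space the projection of $P$ is a $\Lambda$-free polyhedron in $\real^{d-k}$ that is \emph{bounded} (otherwise one could enlarge $P$). Thus it suffices to treat the case where $\rec(P)$ is pointed, and then in fact where $P$ is a polytope; the general case follows by bounding the number of possible lineality spaces (finitely many rational coordinate subspaces up to $\Aff(\Lambda)$) and applying the bounded case in the complementary coordinates.

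Next I would bound the polytope case. Since $P$ is lattice-free with respect to $s\integer^d$, by Doignon–Bell–Scarf type reasoning — or more directly by the flatness theorem — every lattice-free convex body has lattice width bounded by a constant depending only on $d$; rescaling by $1/s$, a $\Lambda$-free body has width at most $c(d)\cdot s$ in some integral direction. After applying a unimodular transformation we may assume this thin direction is $e_d$, so $P$ lies between two hyperplanes $x_d = \alpha$ and $x_d = \beta$ with $\beta - \alpha \le c(d) s$; translating by an element of $\Lambda$ we may assume $0 \le \alpha \le s$. This controls $P$ in the $e_d$-direction. To control the remaining coordinates, I would slice: for each integer level $t$ with $\alpha \le t \le \beta$ the slice $P \cap \{x_d = t\}$ is a lattice-free polytope in a $(d-1)$-dimensional affine lattice (here the maximality of $P$ forces each slice to be nontrivial and in fact each slice, together with the facet structure, to be tightly constrained), so by induction on $d$ each slice lies, up to a $\Lambda$-preserving transformation of the slice hyperplane, in a bounded region; the difficulty is that the normalizing transformations on different slices need not be compatible. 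To handle this I would instead argue that $P$ has boundedly many vertices (a maximal lattice-free polytope has at most $2^d$ vertices by a Doignon-type argument, since each facet carries an interior lattice point and these points pin down the polytope), each vertex is rational with denominator dividing $s$, and each vertex lies in the bounded slab; combined with the width bound in $d$ directions obtained by peeling off coordinates one at a time, all vertices are confined to a box of side $O_d(s)$, hence there are finitely many possibilities for the vertex set modulo $\Lambda$.

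Finally I would assemble the pieces: there are finitely many rational coordinate subspaces that can arise as $\lin(\rec(P))$ modulo $\Aff(\Lambda)$; for each, the projection of $P$ is a maximal $\Lambda$-free polytope in the complementary space, which by the polytope case has finitely many equivalence classes; and $P$ is recovered from this projection as its preimage, so $\cPifm{s}{d}/\Aff(\Lambda)$ is finite. I expect the main obstacle to be the bookkeeping in the unbounded case — showing cleanly that maximality forces the modulo-lineality projection to be bounded, and that the lineality space can be normalized within $\Aff(\Lambda)$ without losing control of the rest of $P$ — together with making the vertex-confinement argument rigorous, i.e. getting a width bound simultaneously in $d$ linearly independent integral directions rather than just one. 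The flatness theorem gives one direction for free; iterating it on the lower-dimensional slices, and checking that the denominators stay bounded by $s$ throughout, is the technical heart of the argument.
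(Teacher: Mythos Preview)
Your reduction from unbounded $P$ to polytopes is correct and essentially coincides with the paper's Proposition~\ref{S=P+L}. The trouble lies in the polytope step. Two of the structural facts you invoke are not available for $\cPifm{s}{d}$: Lov\'asz's property that each facet carries a $\Lambda$-point in its relative interior, and the Doignon-type bound (which, incidentally, bounds \emph{facets}, not vertices), are theorems about $\cCfm{s}{d}$, i.e.\ about sets maximal among \emph{all} $\Lambda$-free convex sets. Elements of $\cPifm{s}{d}$ are only maximal among \emph{integral} $\Lambda$-free polyhedra; the paper explicitly produces examples $Q^d_s\in\cPifm{s}{d}\setminus\cPfmi{s}{d}$, and for any such example some facet must lack a relative-interior $\Lambda$-point (else it would lie in $\cCfm{s}{d}$). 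More seriously, your plan to ``peel off coordinates one at a time'' via iterated flatness does not go through: the projection of a polytope $P\in\cPifm{s}{d}$ onto fewer coordinates is \emph{never} $\pi(\Lambda)$-free---this is exactly Lemma~\ref{projection non-free} in the paper---so flatness cannot be reapplied to the projection; and if you slice instead, the thin directions of the slices are unrelated and do not assemble into a simultaneous width bound on $P$. A single thin direction does not bound volume (thin slabs can be arbitrarily long), and it is precisely the maximality-in-$\cPif{s}{d}$ that must be exploited to rule this out, which your sketch leaves open.

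The paper closes this gap by a different route. It bounds the \emph{lattice diameter} of $P$: assuming $M+1$ collinear $\Lambda$-points lie in $P$ (normalized along $e_d$), one looks at the projection $\pi(P)$, which by the lemma above has an interior $\pi(\Lambda)$-point; via a poset of integral simplices $\cR^{d-1}(o)$ one extracts a simplex $Q\subseteq\pi(P)$ with exactly one interior $\pi(\Lambda)$-point $p$, and Hensley's lower bound on the barycentric coordinate of $o$ in $Q$ forces the fiber $\pi^{-1}(p)\cap P$ to have length exceeding $s$, contradicting $\Lambda$-freeness once $M$ is large. This caps $|P\cap\Lambda|$; one then enlarges $P$ to a polytope $Q\supsetneq P$ with $\intr(Q)\cap\Lambda\subseteq P\cap\Lambda$ and invokes the Lagarias--Ziegler volume bound for polytopes with a bounded number of interior lattice points. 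That machinery---the simplex poset $\cR^d(a)$ and the Hensley barycentric bound---is the missing ``technical heart'' you allude to.
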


We now relate maximal $\Lambda$-free integral polyhedra to
the set $\cPifm{s}{d}$.
Let $\cCfm{s}{d}$ be the class of all $\Lambda$-free convex
sets in $\real^d$ which are not properly contained in
another $\Lambda$-free convex set.
The elements of $\cCfm{s}{d}$ are polyhedra (see
\cite[Proposition\,3.3]{Lovasz89}).
Thus, $\cCfm{s}{d}$ is the class of all maximal
$\Lambda$-free polyhedra in $\R^d$.
Let $\cPfmi{s}{d} := \cPi^d \cap \cCfm{s}{d}$ be the class of
all maximal $\Lambda$-free integral polyhedra in $\R^d$.
By definition we have $\cPfmi{s}{d} \subseteq \cPifm{s}{d}$.
Both classes, $\cPfmi{s}{d}$ and $\cPifm{s}{d}$, are of
interest in cutting plane theory (see
\cite{DelPiaWeismantel10}).
In particular, the complete characterization of pairs of $s$
and $d$ for which the equality $\cPfmi{s}{d} = \cPifm{s}{d}$
holds is unknown.
For $d=1, s \ge 1$ and $d=2, s=1$ equality can be verified
in a straightforward way.
On the other hand, for $d \ge 2, s \ge 3$ the inclusion is
strict.
For instance, consider the polyhedron
$Q^d_s := \conv(\{o, (2s+1) e_1, (2s+1) e_1 + e_2,
(2s-1) e_1 + (2s-1) e_2\}) + \lin(\{e_3, \dots, e_d\})$.
It is easy to verify that $Q^d_s \in \cPifm{s}{d} \setminus
\cPfmi{s}{d}$.
The remaining cases (that is, $d=2, s=2$ and $d \ge 3, 1 \le
s \le 2$) are open.

The finiteness of $\cPfmi{s}{d} / \Aff(\Lambda)$ follows
directly from Theorem~\ref{finiteness}.
This has two consequences:
First, if we choose $s = 1$, then for every dimension $d$,
up to unimodular transformations, there is only a finite
number of maximal lattice-free integral polyhedra.
Second, if we fix some integer $s \ge 1$ and
consider the set of polytopes with vertices in
$\frac{1}{s} \integer^d$, then there is only a finite number
of maximal lattice-free polytopes in this set up to an
affine transformation preserving $\integer^d$. 

The second part of the paper deals with the classification
of the set $\cPfmi{1}{3}$.
As we show later (in Proposition \ref{S=P+L}), we can
restrict ourselves to polytopes within $\cPfmi{1}{3}$.
Let ${\cal M}^d$ be the set of all maximal lattice-free
integral polytopes in $\R^d$.
In dimension one, the set ${\cal M}^1$ consists of all
intervals $[n,n+1]$ for an integer $n$.
Thus, up to a unimodular transformation, $[0,1]$ is the only
maximal lattice-free integral polytope.
In dimension two, it is easy to see that every element of
${\cal M}^2$ is equivalent to $\conv(\{o,2e_1,2e_2\})$.
In \cite{anwawe} it has been shown that, up to a unimodular
transformation, there are only seven different simplices in
$\M$.
In this paper we complete the classification of elements of
$\M$ by proving the following theorem.

\begin{theorem} \label{main.thm}
Let $P \in \M$.
Then, up to a unimodular transformation, $P$ is one of the
following polytopes (see Figure~\ref{fig.main.thm}):
\begin{itemize}
  \item one of the seven simplices
    \begin{center}
      $\begin{array}{l}
        M_1 = \conv(\{o,2e_1,3e_2,6e_3\}), \\
        M_2 = \conv(\{o,2e_1,4e_2,4e_3\}), \\
        M_3 = \conv(\{o,3e_1,3e_2,3e_3\}), \\
        M_4 = \conv(\{o,e_1,2e_1+4e_2,3e_1+4e_3\}), \\
        M_5 = \conv(\{o,e_1,2e_1+5e_2,3e_1+5e_3\}), \\
        M_6 = \conv(\{o,3e_1,e_1+3e_2,2e_1+3e_3\}), \\
        M_7 = \conv(\{o,4e_1,e_1+2e_2,2e_1+4e_3\}),
      \end{array}$
    \end{center}
  \item the pyramid $M_8 = \conv(B \cup \{a\})$ with the base
        $B = \conv(\{ \pm 2e_1, \pm 2e_2 \})$ and the apex 
        $a = (1,1,2)$,
  \item the pyramid $M_9 = \conv(B \cup \{a\})$ with the base
        $B = \conv(\{-e_1,-e_2,2e_1,2e_2\})$ and the
        apex $a = (1,1,3)$,
  \item the prism $M_{10} = \conv (B \cup (B+u))$ with the
        bases $B$ and $B+u$, where $B = \conv(\{ e_1, e_2,
        -(e_1+e_2) \})$ and $u =  (1,2,3)$,
  \item the prism $M_{11}= \conv(B \cup (B+u))$ with the
        bases $B$ and $B+u$, where $B = \conv(\{ \pm e_1,
        2e_2\})$ and $u = (1,0,2)$,
  \item the parallelepiped $M_{12} = \conv(\{\sigma_1 u_1 +
         \sigma_2 u_2 + \sigma_3 u_3: \sigma_1, \sigma_2,
         \sigma_3 \in \{0,1\}\})$ where $u_1 = (-1,1,0)$,
         $u_2 = (1,1,0)$, and $u_3 = (1,1,2)$.
\end{itemize}

\begin{figure}[ht]
  \centering

  \subfigure[$M_1$ \label{M1}]{\includegraphics[height=2.5cm]{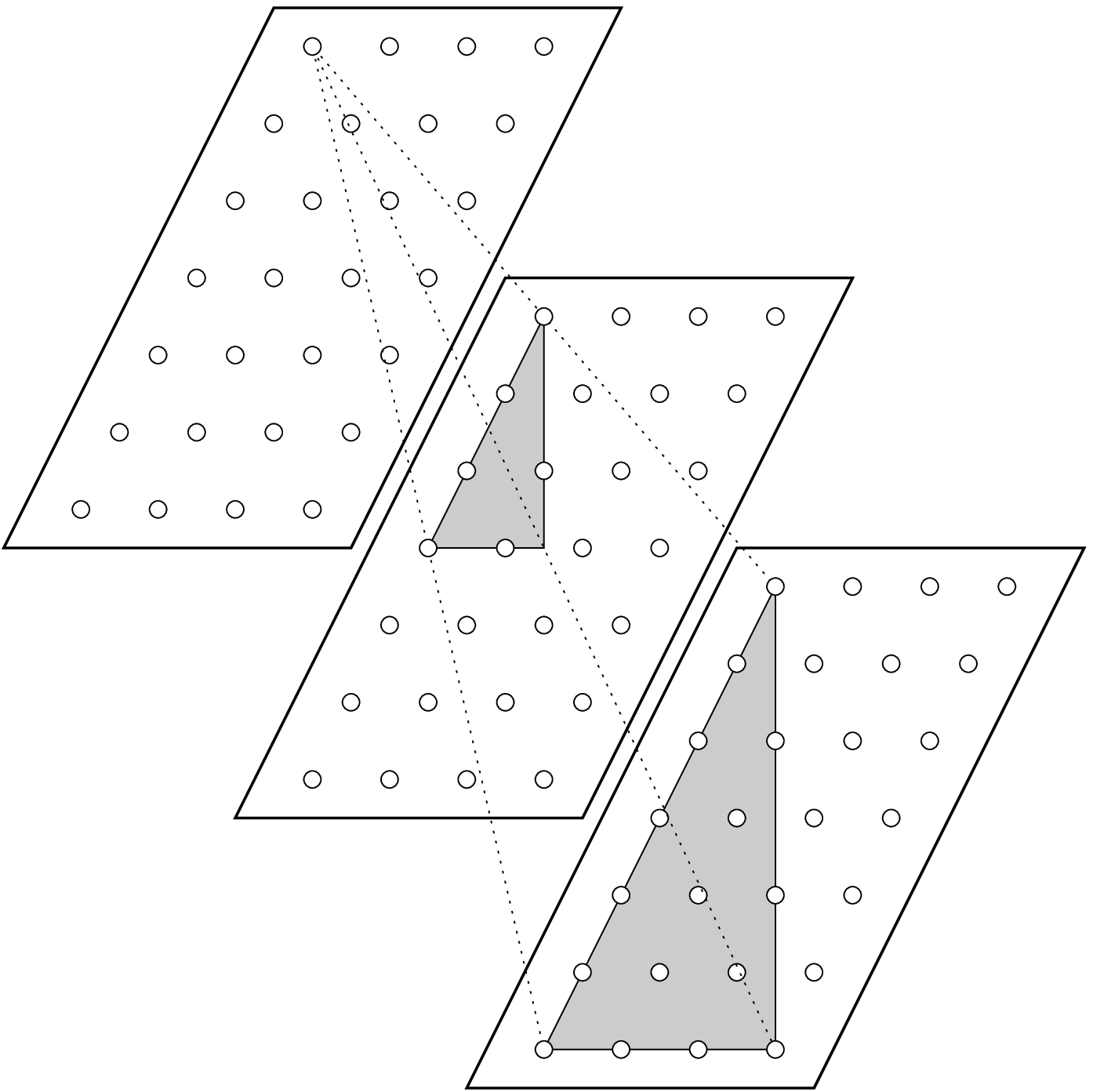}}
  \qquad
  \subfigure[$M_2$ \label{M2}]{\includegraphics[height=2.5cm]{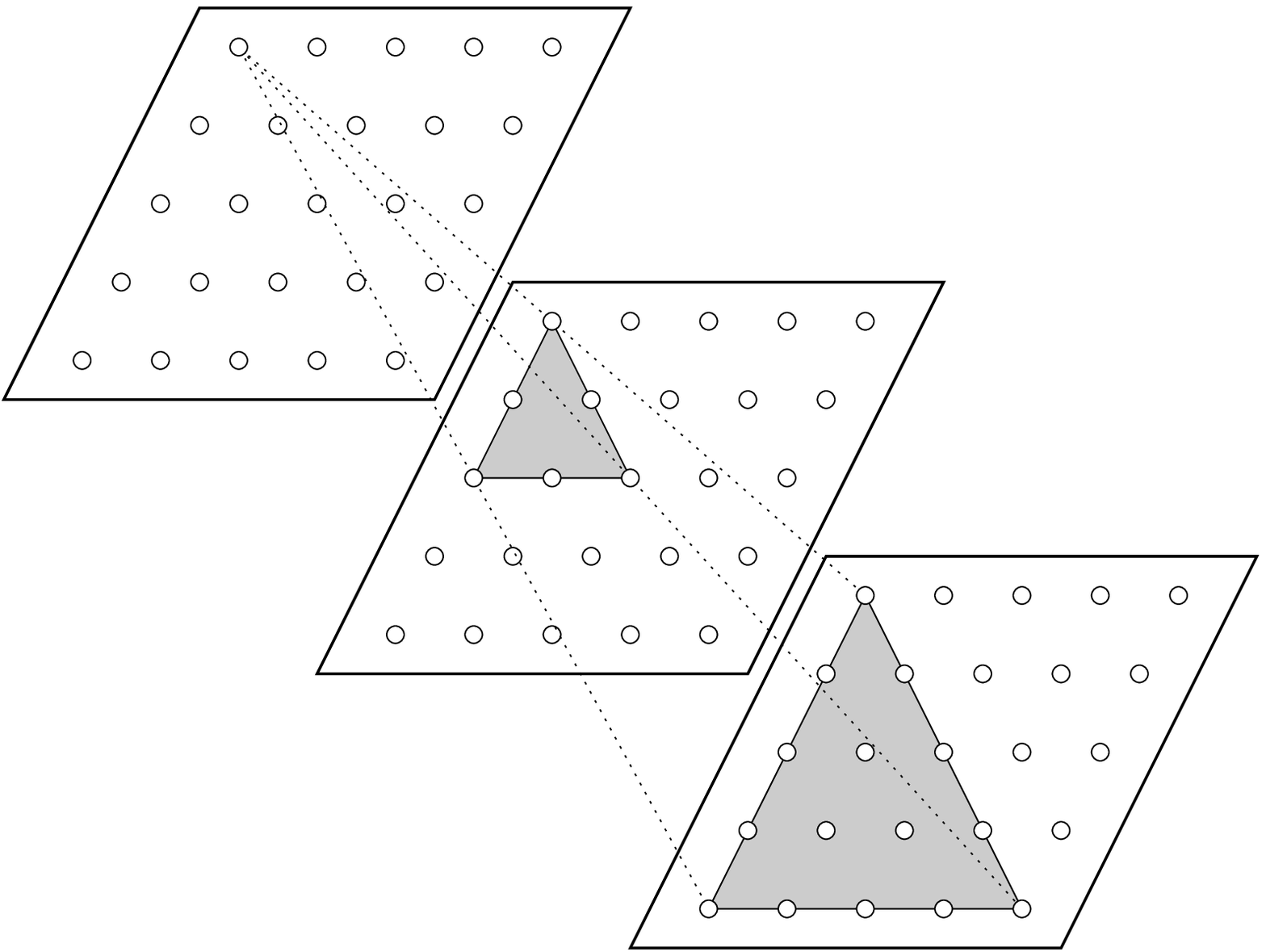}}
  \qquad
  \subfigure[$M_3$ \label{M3}]{\includegraphics[height=2.5cm]{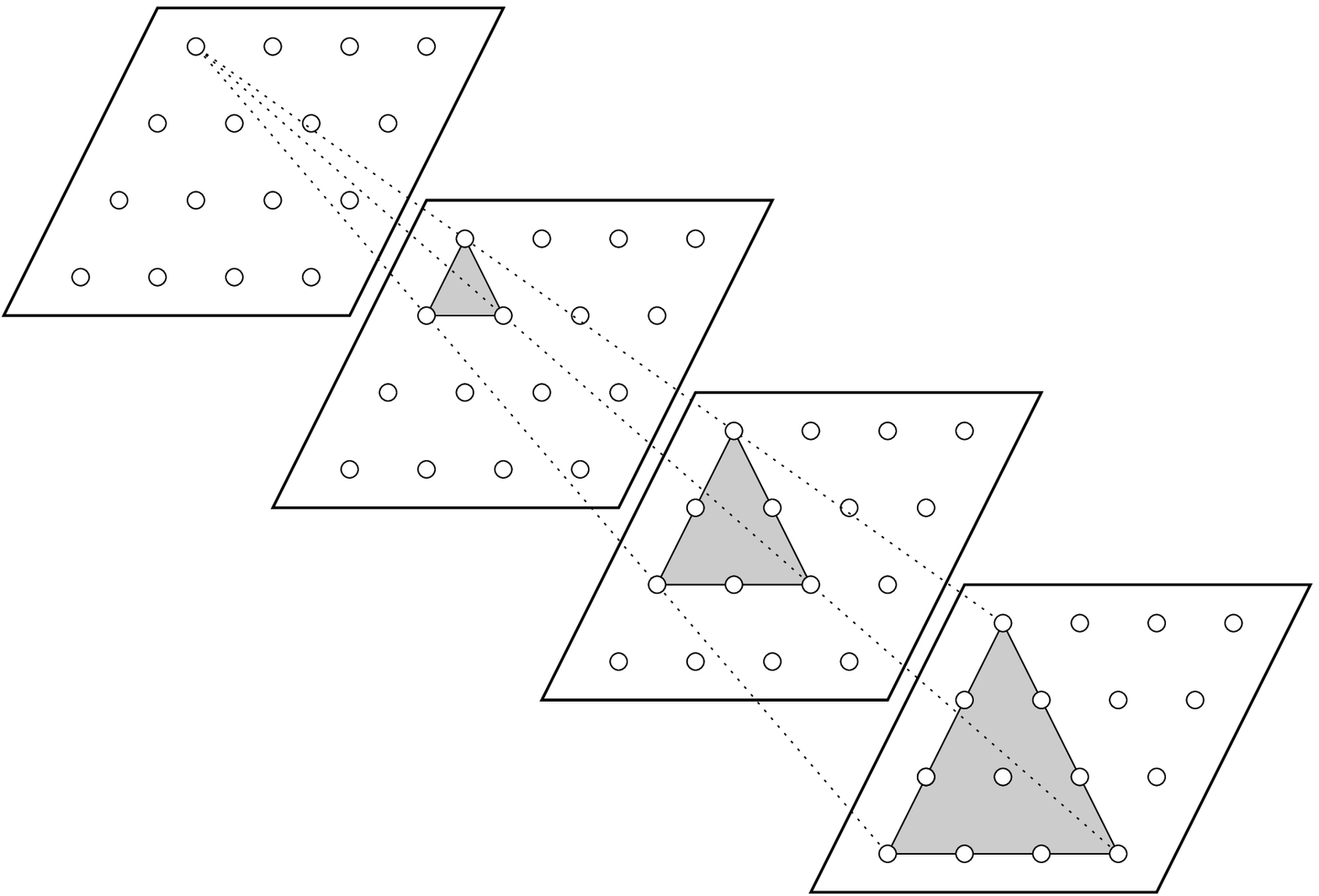}}
  \\
  \subfigure[$M_4$ \label{M4}]{\includegraphics[height=2.5cm]{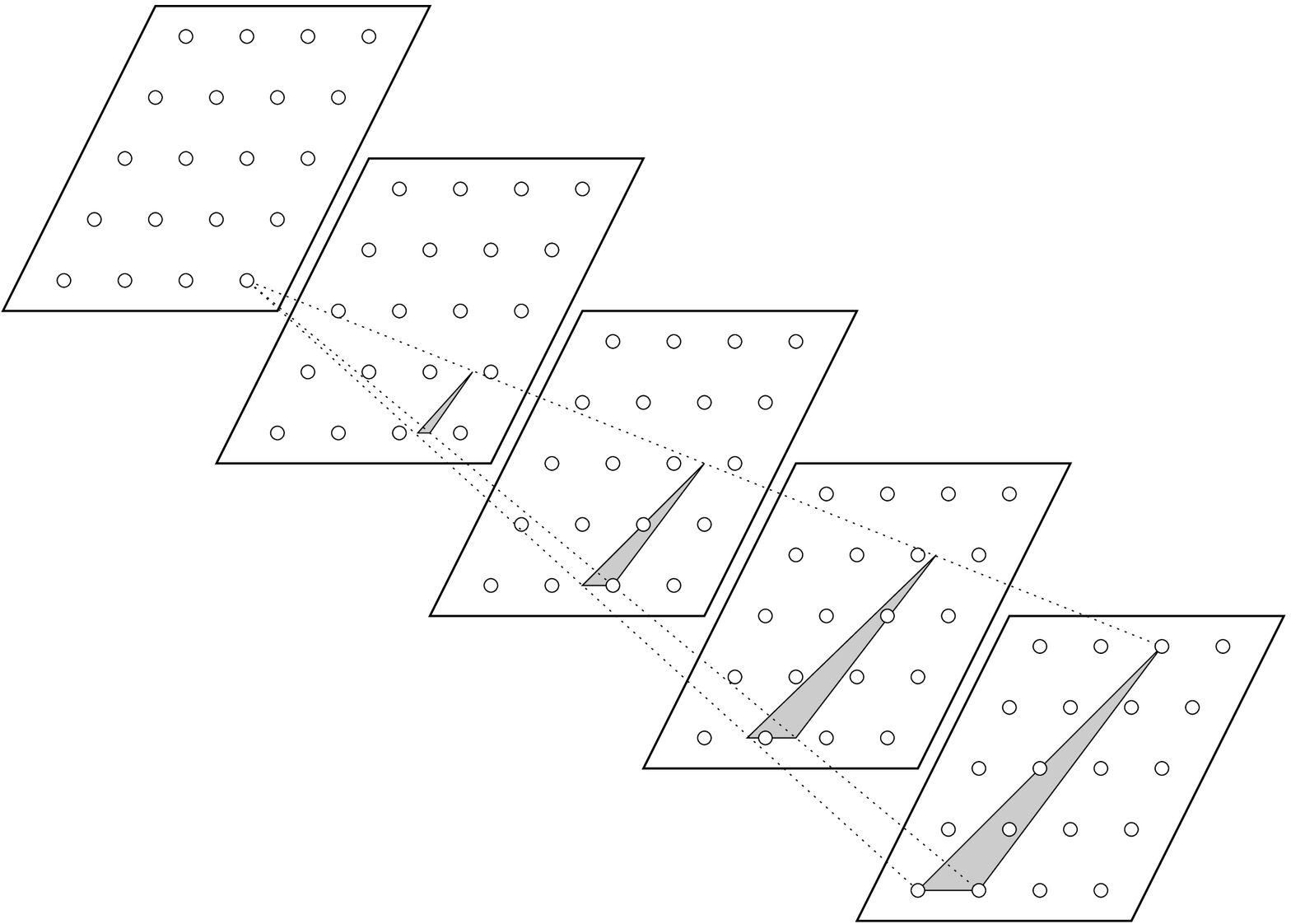}}
  \qquad
  \subfigure[$M_5$ \label{M5}]{\includegraphics[height=2.5cm]{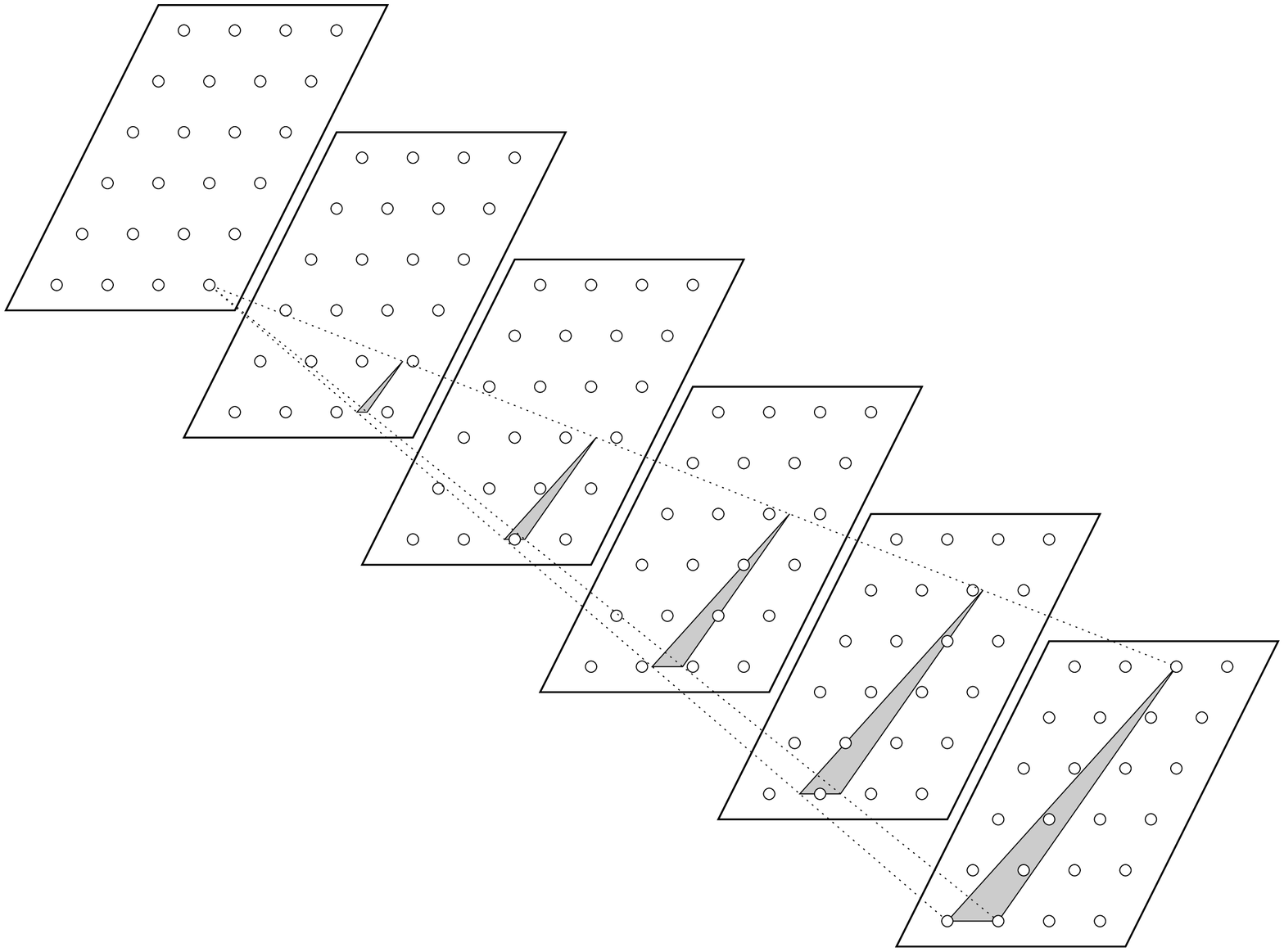}}
  \qquad
  \subfigure[$M_6$ \label{M6}]{\includegraphics[height=2.5cm]{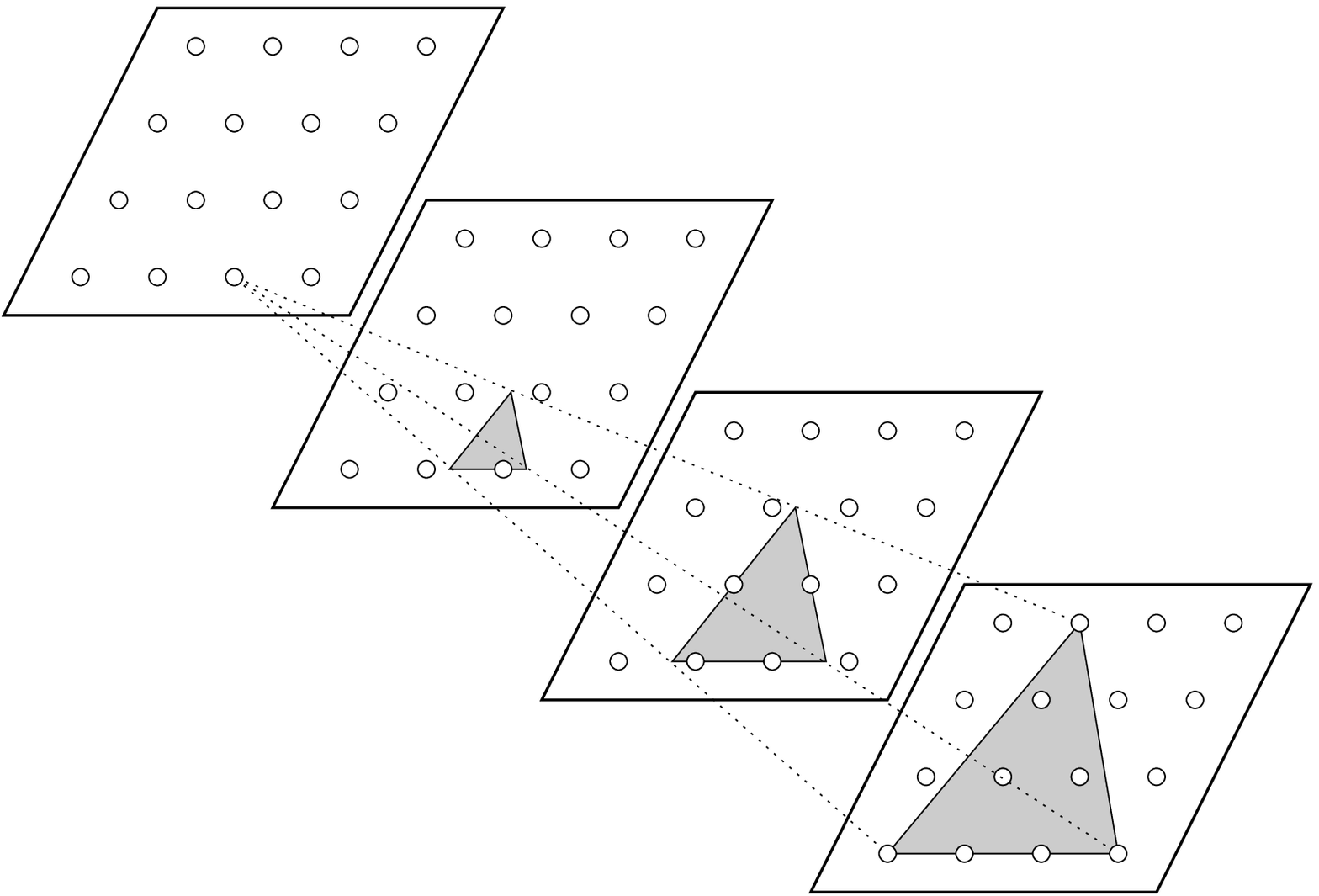}}
  \\
  \subfigure[$M_7$ \label{M7}]{\includegraphics[height=2.5cm]{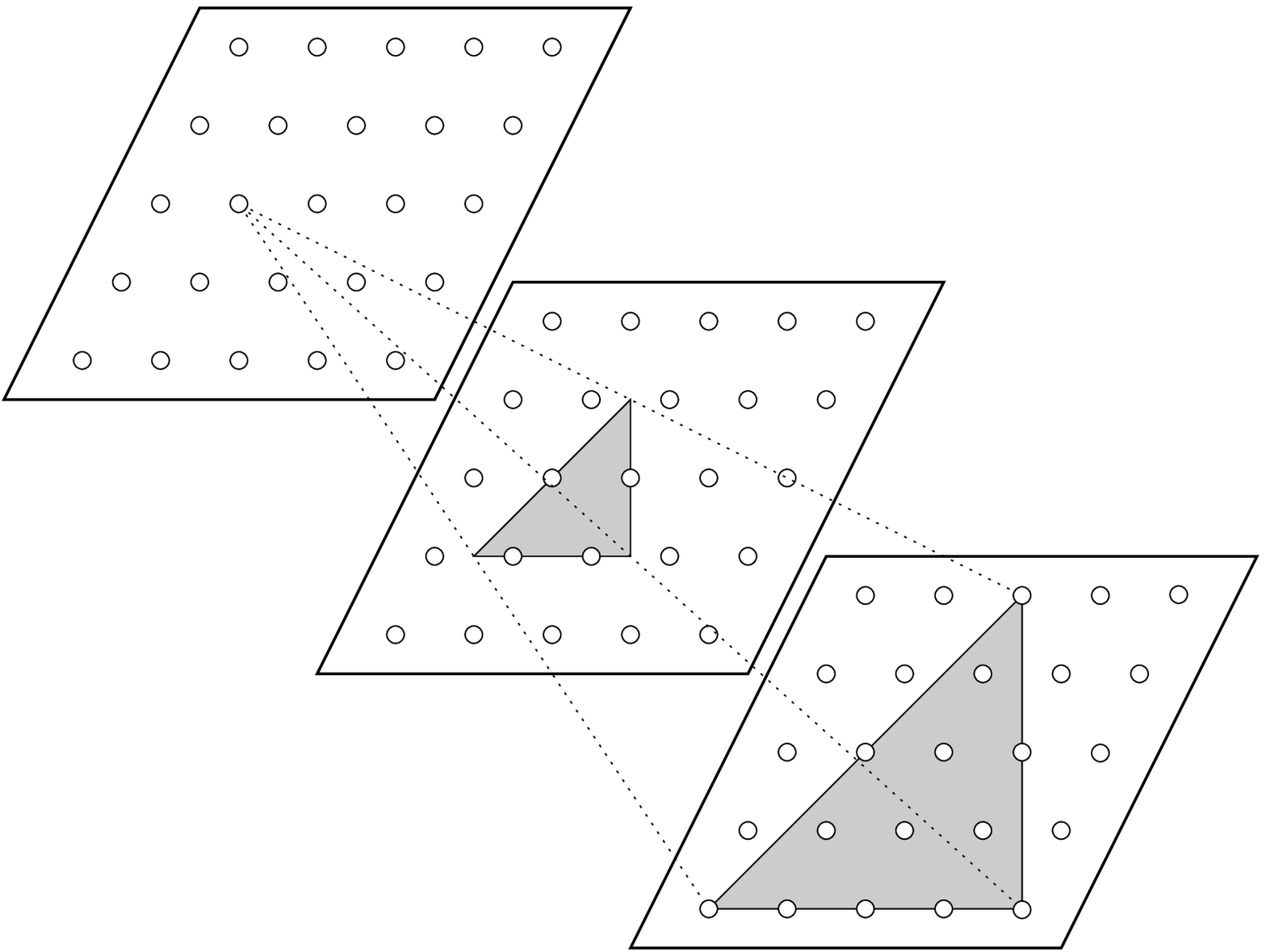}}
  \qquad
  \subfigure[$M_8$ \label{M8}]{\includegraphics[height=2.5cm]{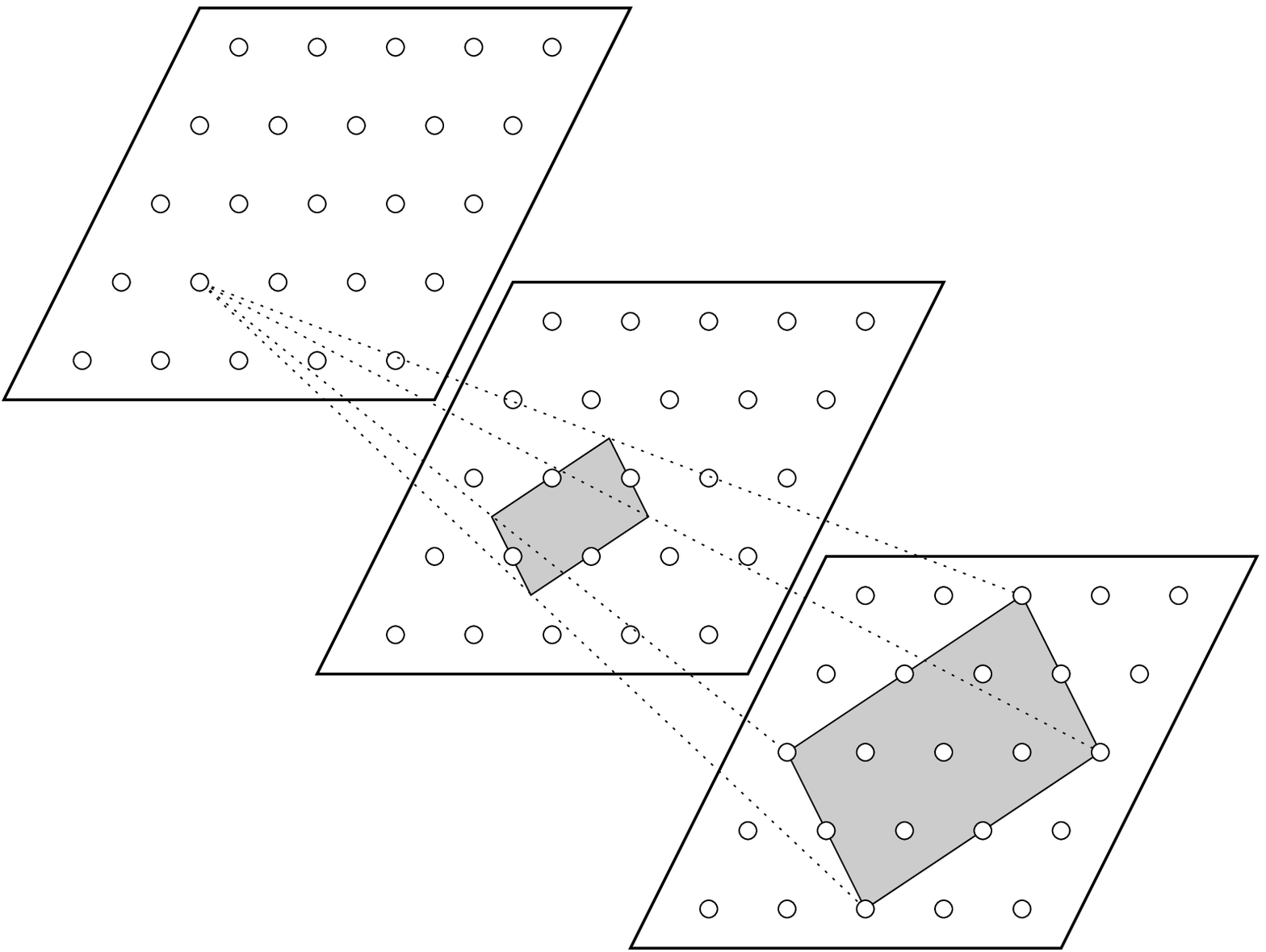}}
  \qquad
  \subfigure[$M_9$ \label{M9}]{\includegraphics[height=2.5cm]{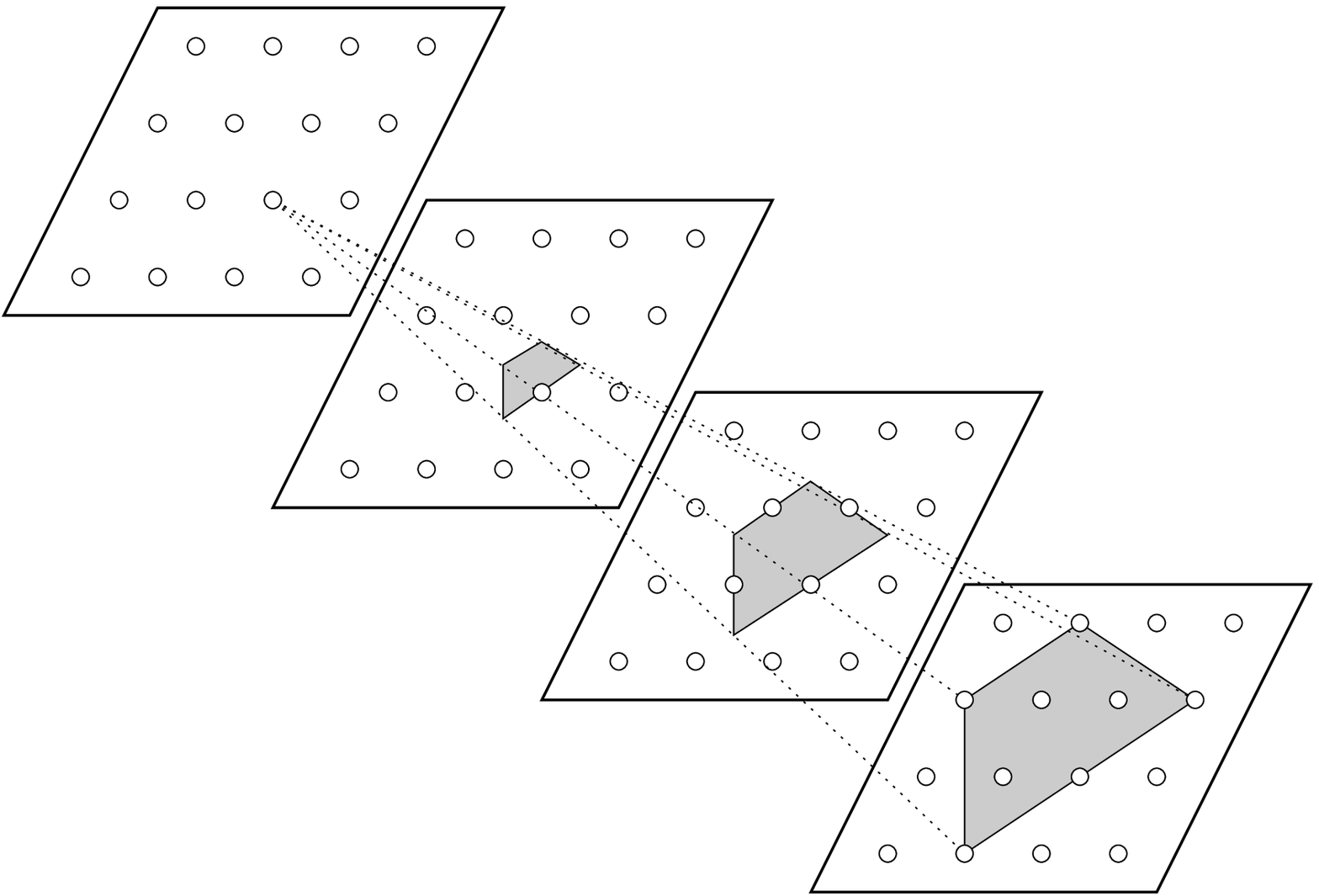}}
  \\
  \subfigure[$M_{10}$ \label{M10}]{\includegraphics[height=2.5cm]{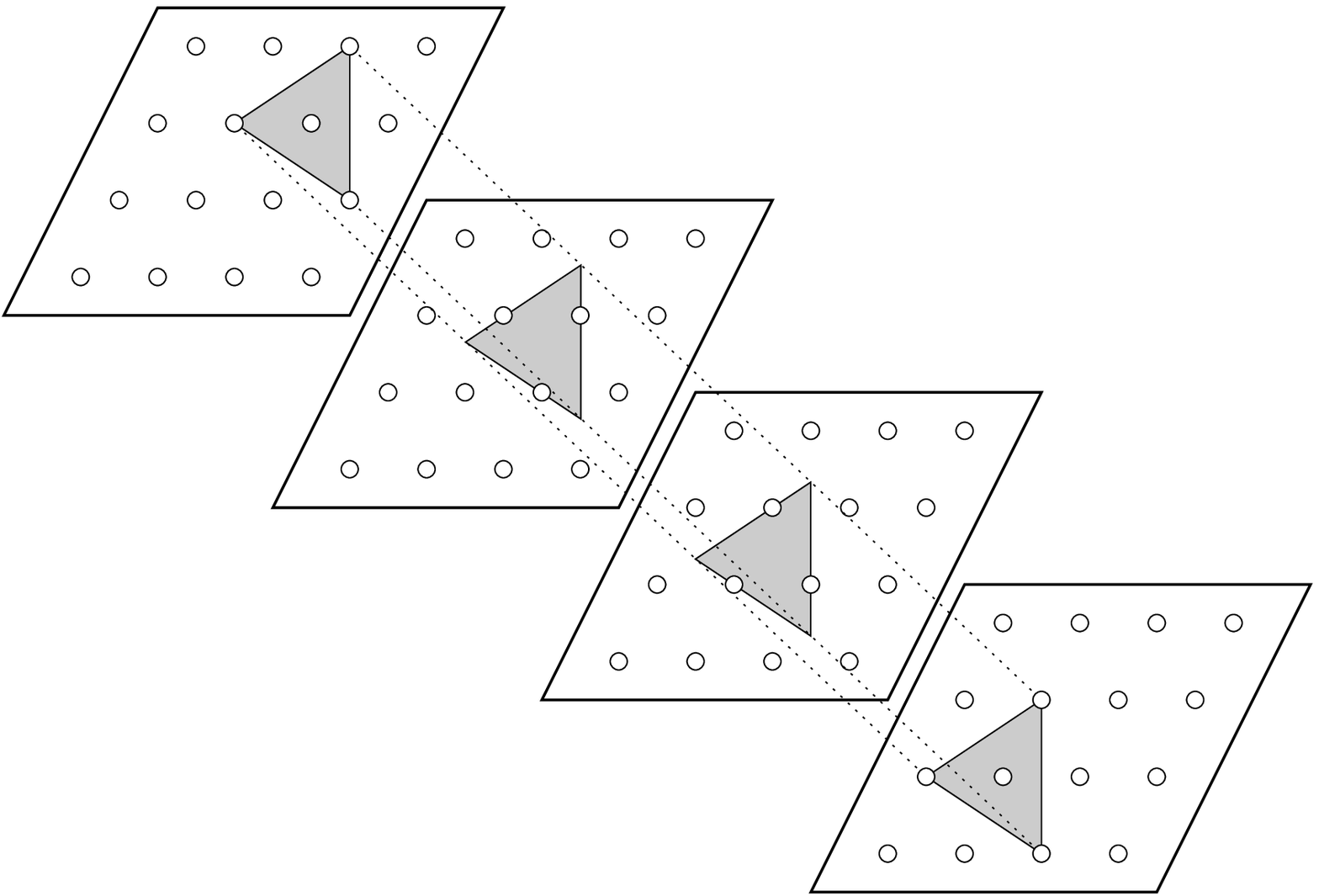}}
  \qquad
  \subfigure[$M_{11}$ \label{M11}]{\includegraphics[height=2.5cm]{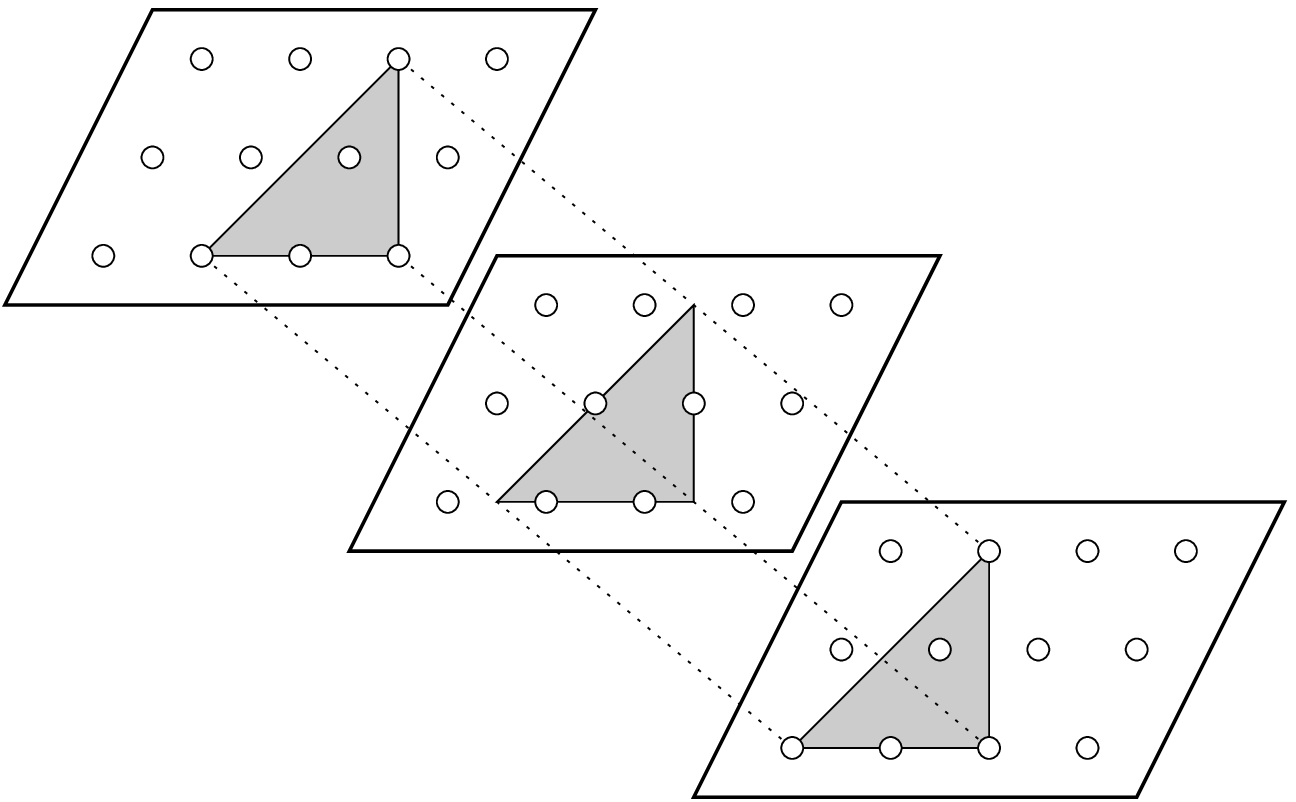}}
  \qquad
  \subfigure[$M_{12}$ \label{M12}]{\includegraphics[height=2.5cm]{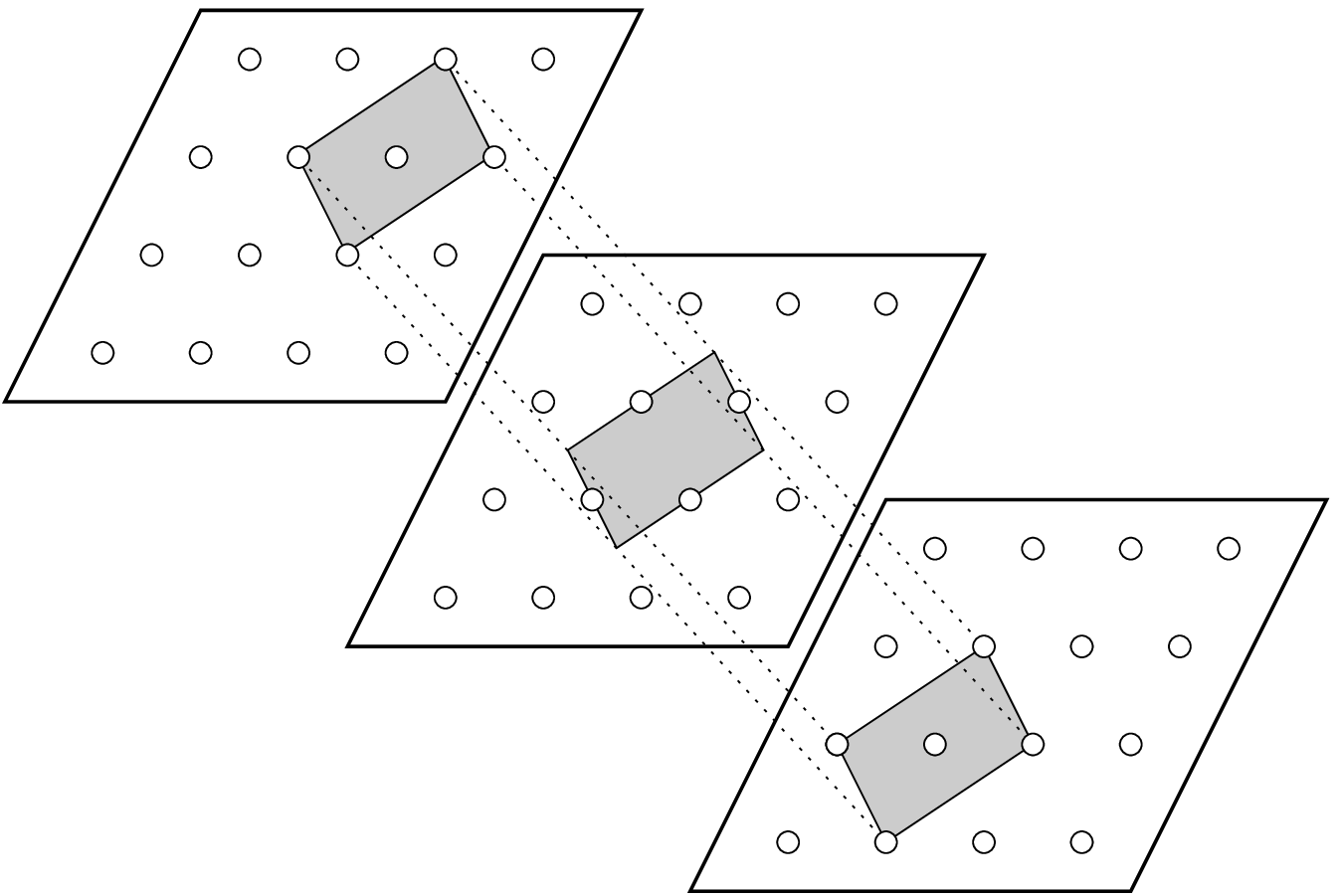}}
  \caption{All maximal lattice-free integral polytopes in dimension three}
  \label{fig.main.thm}
\end{figure}

\end{theorem}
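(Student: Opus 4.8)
The plan is to turn Theorem~\ref{main.thm} into a finite, structured enumeration and then to carry it out case by case according to the combinatorial type of the polytope.

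\emph{Reductions.} As noted in the paper, Proposition~\ref{S=P+L} reduces the classification of $\cPfmi{1}{3}$ to that of $\M$, so we work with $\M$ directly. Two facts are used throughout. First, a lattice-free polytope $Q$ is \emph{maximal} lattice-free if and only if the relative interior of each facet of $Q$ contains a point of $\integer^3$: the ``only if'' part is \cite[Proposition\,3.3]{Lovasz89}, and the ``if'' part holds because enlarging $Q$ across any facet would draw that facet's relative-interior lattice point into the interior of the enlargement. Second, it is classical (see, e.g., \cite{Lovasz89}) that a maximal lattice-free polyhedron in $\real^d$ has at most $2^d$ facets, so $P$ has between $4$ and $8$ facets; and since $P$ is a full-dimensional integral polytope, $\vertt(P)\subseteq\integer^3$. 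Finally, by Theorem~\ref{finiteness} the set $\M$ splits into finitely many equivalence classes, so only finitely many polytopes have to be identified; but since the proof of Theorem~\ref{finiteness} is not constructive, the list must be produced by an explicit structural analysis rather than by running a search.

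\emph{Reducing the combinatorial type.} If $P$ is a simplex --- four vertices and four facets --- then $P$ is equivalent to one of $M_1,\dots,M_7$ by the classification of lattice-free simplices in $\M$ obtained in \cite{anwawe}. So assume $P$ is not a simplex. By Theorem~\ref{finiteness} the number $\card{\integer^3\cap P}$ is bounded, hence every facet $F$ of $P$ is a two-dimensional lattice polygon carrying boundedly many lattice points, and therefore --- modulo a unimodular transformation of its affine hull --- $F$ belongs to a short explicit list of polygons; moreover, since $F$ must contain a lattice point in its relative interior while $\intr(P)$ must stay free of lattice points, the admissible facets are in fact quite restricted (small triangles, cross-polytope-type quadrilaterals and parallelograms, plus degenerate variants where an edge carries an extra lattice point). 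Feeding this list of possible facets into the incidence structure of a $3$-polytope --- together with a volume or vertex-count estimate coming from Theorem~\ref{finiteness} --- I would eliminate all combinatorial types with seven or eight facets, eliminate all but one of the seven combinatorial types with six facets, and note that the two combinatorial types with five facets are exactly the quadrilateral pyramid and the triangular prism. Thus, after the simplex case, only three types remain: the quadrilateral pyramid, the triangular prism, and the parallelepiped.

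\emph{Finishing the remaining types.} Each remaining type is a finite computation once one normalizes aggressively by unimodular maps. For a pyramid $P=\conv(B\cup\{a\})$, put the base $B$ in the plane $\{x_3=0\}$; then $B$ is one of the finitely many admissible facet polygons, and the apex $a$ is a lattice point (by integrality) strictly off this plane. The requirements that each of the (at most four) triangular side facets contain a lattice point in its relative interior, while $\intr(P)$ contains none, confine $a$ to finitely many positions, and checking them leaves precisely $M_8$ and $M_9$. For a prism $P=\conv(B\cup(B+u))$, normalize the triangular base $B$; the three quadrilateral side facets must each carry a relative-interior lattice point, which together with lattice-freeness determines the translation vector $u$ up to the stabilizer of $B$, yielding exactly $M_{10}$ and $M_{11}$. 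For the parallelepiped the same procedure leaves only $M_{12}$. In every case one must also verify, via the criterion in the first paragraph, that the polytope produced really is maximal lattice-free, and that the twelve polytopes are pairwise inequivalent --- which follows by comparing invariants such as normalized volume, the number of lattice points, and the multiset of facet types.

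\emph{Main obstacle.} The crux is the second step: ruling out the numerous combinatorial types with six, seven or eight facets, and doing so --- and the subsequent enumerations within the surviving types --- in a way that makes completeness transparent. The accompanying, more clerical difficulty is the bookkeeping of lattice points lying on edges and in the relative interiors of facets during the final enumerations; the device that keeps this manageable is to normalize by a unimodular transformation at every stage before imposing the lattice-freeness and relative-interior conditions.
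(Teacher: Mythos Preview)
Your outline has the right skeleton (split by number of facets, cite \cite{anwawe} for simplices, then handle pyramids, prisms, parallelepipeds), but there is a genuine gap in how you make the remaining enumerations finite. You repeatedly invoke ``a volume or vertex-count estimate coming from Theorem~\ref{finiteness}'' to cut down the combinatorial types and to ``confine $a$ to finitely many positions''. The paper explicitly warns that the bound coming from Theorem~\ref{finiteness} exceeds $10^{500}$ already for $d=3$, $s=1$, so it cannot drive any case analysis. Two concrete places where your argument breaks: (i) for a quadrangular pyramid with base in $\{x_3=0\}$ and apex $a=(a_1,a_2,a_3)$, nothing you have written bounds $a_3$; the paper obtains $a_3\le 10$ (and $a_3\le 12$ for simplices) by slicing at height~$1$ and applying the lattice-width/area inequalities of Theorem~\ref{area,width} (Hurkens' bound and its refinements), an ingredient entirely absent from your plan; (ii) for the facet count you use only the generic $2^d=8$ bound and then promise to ``eliminate'' seven- and eight-facet types by matching facet shapes to incidence structures---the paper instead gets $\le 6$ facets in one stroke via a parity argument on $\integer^3/2\integer^3$ (Lemma~\ref{6.facet.lemma}), which simultaneously forces the facet types in the six-facet case.

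A second, smaller gap: in the prism case you write $P=\conv(B\cup(B+u))$ from the outset. A $3$-polytope combinatorially equivalent to a triangular prism need not have parallel (let alone translate) bases; the paper proves this separately (Lemma~\ref{shape.prism}), again using the lattice-width bound on the slice at height~$1$. Without that step your normalization of the prism is unjustified. In short, the missing ideas are the parity argument for the facet bound and the slicing-plus-lattice-width argument (Theorem~\ref{area,width}) for bounding heights and base shapes; once you add those, the rest of your plan matches the paper's.
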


We now introduce some further notation.
Throughout the paper, $d \in \natur$ is the dimension of the
underlying space.
Elements of $\R^d$ are considered to be column vectors.
Transposition is denoted by $(\cdot)^\top$ and the origin by
$o$.
We denote by $e_{j}$ the $j$th unit vector.
Its size will always be clear from the context.
For $x, y \in \real^d$, we denote by $[x,y]$ the line
segment with endpoints $x$ and $y$, and by $\ray{x,y}$ the
ray (i.e., the half-line) emanating from $x$ and passing
through $y$.
An additive subgroup $\Lambda$ of $\R^d$ is said to be a
\emph{lattice} if the intersection of $\Lambda$ with every
compact set of $\R^d$ is finite.
In this paper, for the sake of simplicity, we fix our
underlying lattice to be $\Z^d$, though, due to affine
invariance, the obtained results are independent of the
concrete choice of the lattice.

Given a set $K \subseteq \R^d$, we use the functionals
$\conv(K)$ (convex hull of $K$), $\aff(K)$ (affine hull of
$K$), $\lin(K)$ (linear hull of $K$), $\intr(K)$ (interior
of $K$), $\relintr(K)$ (relative interior of $K$),
$\relbd(K)$ (relative boundary of $K$), $\rec(K)$ (recession
cone of $K$), and $\vx(K)$ (set of vertices of $K$). 
For $K \subseteq \real^d$, $\vol(K)$ denotes the volume of
$K$ in $\aff(K)$.

The dual lattice of $\Lambda= s \integer^d$  is  $\Lambda^\ast =
\frac{1}{s} \Z^d$.
By $\pi$ we denote the projection onto the first $d-1$
coordinates, i.e., the mapping $\pi(x) := (x_1, \ldots,
x_{d-1})$, where $x := (x_1, \ldots, x_d) \in \real^d$.
This implies $\pi(\Lambda) = s\integer^{d-1}$.
If $K \subseteq \R^d$ is a closed convex set with nonempty
interior, then the \emph{lattice width} of $K$ (with respect
to the lattice $\Lambda$) is defined by
$$ w_\Lambda(K) : = \min_{u \in \Lambda^\ast \setminus \{o\}}
   w(K,u),$$ 
where $w(K,u)$, for $u \in \real^d$, is the
\emph{width function} given by 
$$w(K,u) := \max_{x \in K} u^\top x - \min_{x \in K} u^\top
x.$$
The lattice width of $K$ with respect to $\Lambda$ can be
seen as the smallest number of \textquotedblleft lattice
slices\textquotedblright~of $K$ along any nonzero vector in
$\Lambda^\ast$. 

Theorem \ref{finiteness} is proved in Section
\ref{sec.finiteness}.
In Section~\ref{notions}, we introduce the tools which we
need for proving Theorem \ref{main.thm} and we explain the idea
of the proof.
The proof of Theorem \ref{main.thm} is given in
Sections \ref{parity}--\ref{sect:computer-search}.


\section{The finiteness proof} \label{sec.finiteness}

In this section, we prove Theorem \ref{finiteness}.
Let us first highlight the main steps of the proof.

\begin{enumerate}
  \item {\bf Reduction to polytopes.}
    Every unbounded $P \in \cPifm{s}{d}$ is the direct
    product of an affine space and a polytope in
    $\cPifm{s}{k}$ for some $1 \le k \le d$ (see Proposition
    \ref{S=P+L}).
    Thus, it suffices to verify finiteness only for
    polytopes within $\cPifm{s}{d}$.
  \item {\bf Bounding $\boldsymbol{|P \cap \Lambda|}$.}
    Consider a polytope $P \in \cPifm{s}{d}$.
    We construct an upper bound on the number of points of
    $\Lambda$ on the boundary of $P$.
    For that, we use the lattice diameter.
    The \emph{lattice diameter} of $P$ with respect to
    $\Lambda$ is defined as the maximum of $|l \cap P \cap
    \Lambda|-1$ over all lines $l$. 
    We show that the lattice diameter of $P$ is bounded from
    above in terms of $d$ and $s$ only.
    This is done as follows.

    We assume by contradiction that, for some line $l$,  $|l
    \cap P \cap \Lambda|-1$ is a large number $M$.
    By a $\Lambda$-preserving transformation, $l =
    \lin(\{e_d\})$.
    Let $P'$ be the projection of $P$ onto the first $d-1$
    coordinates.
    Then $\pi(l) = o$ and from $P \in \cPifm{s}{d}$ it
    follows $\intr(P') \cap \pi(\Lambda) \ne \emptyset$ (see
    Lemma \ref{projection non-free}).
    Choose $p \in \intr(P') \cap \pi(\Lambda)$.
    Then we construct a $k$-dimensional simplex $S$ with
    vertices $ o=p_0, p_1, \dots, p_k$ in $\Z^{d-1}$
    such that $p$ is the only point of $\pi(\Lambda)$ in the
    relative interior of $S$.
    This construction is the key ingredient in our proof
    (see Lemma \ref{properties of tight}).
    Let $\lambda_0, \dots, \lambda_k$ be the barycentric
    coordinates of $p$ with respect to $S$.
    By results of \cite{hensley-1983,lagarias-ziegler-1991}
    (see Theorem \ref{hensley simplices with one point}),
    the $\lambda_i$'s are bounded from below in terms of $d$
    and $s$ only.
    The length of $(p+l) \cap P$ is bounded from below in
    terms of $\lambda_0$ and $M$.
    On the other hand, since $P$ is $\Lambda$-free, the
    length of $(p+l) \cap P$ is at most $s$.
    So, if $M$ is too large, this leads to a contradiction.

    The upper bound on the lattice diameter implies an upper
    bound on $|P \cap \Lambda|$ (see Lemma \ref{G bound}).
  \item {\bf Conclusion of finiteness.}
    The upper bound on $|P \cap \Lambda|$ together with
    results of \cite{hensley-1983,lagarias-ziegler-1991}
    (see Theorem \ref{hensley-finiteness}) implies an upper
    bound on the volume of $P$ (see Theorem
    \ref{finite-volume}).
    All bounds only depend on $d$ and $s$.
    This, in turn, yields finiteness of $\cPifm{s}{d}$ (see
    Theorem \ref{vol-bound<=>finiteness}).
\end{enumerate}

The fact that we can restrict to the study of polytopes in
$\cPifm{s}{d}$ is a consequence of the following
proposition.
We point out that a similar result is true for the set
$\cCfm{s}{d}$  as well (see
\cite[Proposition\,3.1]{Lovasz89}).

\begin{proposition} \label{S=P+L} \label{unbounded lat-free}
Let $d,s \in \natur$ and let $P \in \cPifm{s}{d}$.
Then there exists some $k \in \{1, \ldots, d\}$ and a
polytope $P' \in\cPifm{s}{k}$ such that $P \equiv P' \times
\real^{d-k} \modulo{\Aff(\Lambda)}$.
\end{proposition}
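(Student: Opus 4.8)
The plan is to decompose $P$ along its lineality space. Since $P \in \cPifm{s}{d}$ is a polyhedron, write $L := \lin(\rec(P))$ for the lineality space of $P$. The first step is to show that $L$ is a \emph{rational} subspace, in fact a $\Lambda$-rational one: because $P$ is an integral polyhedron, $\rec(P)$ is a rational cone, hence $L = \lin(\rec(P))$ is spanned by integer vectors. Moreover, since $P$ is maximal $\Lambda$-free, I expect $L \cap \Lambda$ to be a full-rank sublattice of $L$ (a lattice of rank $\dim L$ inside $L$); if it were not, one could translate $P$ in a direction of $L$ transverse to $\Lambda$ and still stay $\Lambda$-free, and iterating would let one enlarge $P$, contradicting maximality. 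Granting this, apply a transformation in $\Aff(\Lambda)$ so that $L = \spann(\{e_{k+1}, \dots, e_d\})$ for $k = d - \dim L$; here one uses that a primitive sublattice of $\Z^d$ can be completed to a basis of $\Z^d$, so such a $\Lambda$-preserving transformation exists.

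The second step is to produce the polytope factor. After the normalization, $P = P' \times \real^{d-k}$ where $P' := \pi'(P) \subseteq \real^k$ is the image of $P$ under the projection onto the first $k$ coordinates; one checks $P'$ is a polytope (it has trivial lineality space by construction and is still a polyhedron, hence bounded) and that $P'$ is integral because $P$ is and the projection sends $\Z^d$ onto $\Z^k$ in this coordinate frame. It remains to verify $P' \in \cPifm{s}{k}$, i.e. that $P'$ is $s\Z^k$-free and inclusion-maximal among such integral polytopes. The $s\Z^k$-freeness of $P'$ follows from the $\Lambda$-freeness of $P$: if $q \in \intr(P') \cap s\Z^k$, then $q \times \real^{d-k} \subseteq \intr(P)$ would contain a point of $s\Z^d$ (lift $q$ using any point of $s\Z^{d-k}$ in the last coordinates), contradicting $\Lambda$-freeness of $P$. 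For maximality, suppose $P' \subsetneq Q'$ with $Q' \in \cPif{s}{k}$; then $Q' \times \real^{d-k}$ is an integral $\Lambda$-free polyhedron strictly containing $P$, contradicting $P \in \cPifm{s}{d}$.

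The main obstacle is the second step of the first paragraph: showing that the lineality space of a maximal $\Lambda$-free integral polyhedron intersects $\Lambda$ in a full-rank lattice. The integrality of $P$ makes this cleaner than in the general convex case handled in \cite[Proposition\,3.1]{Lovasz89}, but one still has to rule out the possibility that $\rec(P)$ contains a line whose direction is irrational relative to $\Lambda$ — this is where integrality of $\rec(P)$ is essential — and then argue that along each rational line direction in $L$ the set $P$ already ``uses up'' a full copy of $\Lambda \cap L$, so no further translation is possible without creating an interior lattice point; this is exactly what maximality buys. Once this structural fact is in hand, the remaining verifications are the routine bookkeeping sketched above.
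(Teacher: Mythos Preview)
Your approach has a genuine gap. You set $L := \lin(\rec(P))$ and then assert that, after normalizing $L$ to $\{0\}\times\real^{d-k}$, one has $P = P' \times \real^{d-k}$ with $P' = \pi'(P)$. This equality is not automatic: in general only $P \subseteq P' \times \real^{d-k}$ holds, and equality would require $\rec(P)$ to coincide with its linear span, i.e.\ to be a linear subspace. An unbounded integral $\Lambda$-free polyhedron can perfectly well have a pointed recession cone (for instance the half-strip $[0,\infty)\times[0,1]$ in $\real^2$), so this must be proved. It is precisely here that maximality enters, and you do not invoke it at this step. (A related slip: ``trivial lineality space $\Rightarrow$ bounded'' is false; what makes $P'$ bounded is $\rec(P') = \pi'(\rec(P)) = \{0\}$, since $\rec(P)\subseteq L=\ker\pi'$.)

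Conversely, the ``main obstacle'' you worry about---that $L\cap\Lambda$ has full rank in $L$---is not an obstacle at all. Since $P$ is integral, $\rec(P)$ is an integral cone, so $\lin(\rec(P))$ is spanned by integer vectors and automatically meets $\Lambda=s\integer^d$ in a full-rank sublattice. No maximality is needed for this.

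The paper closes the real gap by induction, peeling off one $\real$ factor at a time. Take any nonzero $u\in\rec(P)\cap\Lambda$ (existence by integrality), normalize to $u=se_d$, and set $P':=\pi(P)$. If some $q\in\intr(P')\cap s\integer^{d-1}$ existed, the fiber $\pi^{-1}(q)\cap\intr(P)$ would contain a full ray in direction $e_d$ and hence infinitely many points of $\Lambda$, contradicting $\Lambda$-freeness of $P$; thus $P'$ is $s\integer^{d-1}$-free. Then $\pi^{-1}(P')=P'\times\real$ is an integral $\Lambda$-free polyhedron containing $P$, and \emph{now} maximality forces $P=P'\times\real$. In other words, maximality is what upgrades ``$P$ contains a ray'' to ``$P$ contains the whole line''---exactly the step your sketch skips.
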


\begin{proof}
If $P$ is bounded, the assertion is trivial as we let
$k = d$ and $P' = P$.
Let $P$ be unbounded.
By an inductive argument, it suffices to show the existence
of $P' \in \cPifm{s}{d-1}$ such that $P \equiv P' \times
\real \modulo{\Aff(\Lambda)}$.

Since $P$ is unbounded, the recession cone of $P$ contains
nonzero vectors.
Since $P$ is integral, the recession cone of $P$ is an
integral polyhedron (see, for example,
\cite[\S16.2]{schrijver-book-86}).
Thus, the recession cone of $P$ contains a nonzero integer
vector $u$.
By scaling, we can assume that $u \in \Lambda$.
		
Applying a $\Lambda$-preserving transformation we assume
that $u = s e_d$.
The polyhedron $P' := \pi(P) \subseteq \real^{d-1}$ is
$\pi(\Lambda$)-free.
In fact, assume there exists a point $p' \in \intr(P') \cap
\pi(\Lambda)$.
Then $\intr(P) \cap \pi^{-1}(p')$ is nonempty and contains
infinitely many points of $\Lambda$, a contradiction to the
choice of $P$.

Since $P'$ is $\pi(\Lambda)$-free, $\pi^{-1}(P')$ is
$\Lambda$-free.
By construction, $P \subseteq \pi^{-1}(P')$, and since $P$
is maximal in $\cPif{s}{d}$ we even have $P=\pi^{-1}(P')$.
Furthermore, $P' \in \cPifm{s}{d-1}$.
In fact, if $P'$ were not maximal in $\cPif{s}{d-1}$ we
could find $P'' \in \cPif{s}{d-1}$ such that $P'
\varsubsetneq P''$.
Then $P$ is properly contained in the $\Lambda$-free
integral polyhedron $\pi^{-1}(P'')$, a contradiction to the
assumptions on $P$.
By construction, $P \equiv P' \times \real
\modulo{\Aff(\Lambda)}$.
\end{proof}

The following is well-known (see, for instance,
\cite[Theorem\,2]{lagarias-ziegler-1991}).

\begin{theorem} \label{vol-bound<=>finiteness}
Let $d,s \in \natur$ and let $\cX \subseteq \cPi^d$ be a set
of polytopes.
Then the set $\cX / \Aff(\Lambda)$ (where $\Lambda=s
\integer^d$)  is finite if and only if the volume of all
elements of $\cX$ is bounded from above by a constant
depending only on $d$ and $s$.
\end{theorem}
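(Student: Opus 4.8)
The plan is to prove the two implications separately, with the classical finiteness theorem of Hensley and Lagarias--Ziegler supplying the only substantive input.

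\emph{From finitely many classes to bounded volume.} This direction is elementary. Suppose $\cX/\Aff(\Lambda)$ is finite and pick representatives $Q_1,\dots,Q_m\in\cX$ of the orbits meeting $\cX$. Every $T\in\Aff(\Lambda)$ lies in $\Aff(\integer^d)$ and hence has linear part of determinant $\pm1$, so $T$ preserves $j$-dimensional Lebesgue measure for all $j$ and carries affine hulls to affine hulls; consequently $\vol(TQ)=\vol(Q)$ for every polytope $Q$. Thus $\vol$ is constant on each orbit and every $P\in\cX$ satisfies $\vol(P)\le\max_{1\le j\le m}\vol(Q_j)$, which I take as the required bound. (A priori this depends on $\cX$; in every application in this paper $\cX$ is one of the families $\cPifm{s}{d}$, and the bound will be made explicit in terms of $d$ and $s$ there.)

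\emph{From bounded volume to finitely many classes.} First I would reduce to the full-dimensional case: if $P\in\cPi^d$ has dimension $k<d$, then $P=\conv(P\cap\integer^d)$ forces $\aff(P)$ to be rational, so after translating by a lattice point of $P$ and applying a suitable unimodular transformation we may assume $\aff(P)=\real^k\times\{o\}$ with $\integer^d$ inducing the standard lattice $\integer^k$ there; then $P$ becomes a $k$-dimensional lattice polytope of controlled volume, and there are only finitely many dimensions $k\le d$ to consider. For full-dimensional lattice polytopes whose volume is at most a fixed constant, the finiteness of the number of $\Aff(\integer^k)$-equivalence classes is exactly the theorem of Lagarias and Ziegler (see \cite[Theorem~2]{lagarias-ziegler-1991}, building on \cite{hensley-1983}), which I would cite rather than reprove. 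Finally I pass from $\Aff(\integer^d)$ to its subgroup $\Aff(\Lambda)$: since $\Aff(\integer^d)=\integer^d\rtimes\mathrm{GL}_d(\integer)$ and $\Aff(\Lambda)=s\integer^d\rtimes\mathrm{GL}_d(\integer)$, the index $[\Aff(\integer^d):\Aff(\Lambda)]$ equals $s^d$, so each $\Aff(\integer^d)$-class splits into at most $s^d$ classes modulo $\Aff(\Lambda)$, and finiteness is preserved.

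The only nontrivial ingredient is the Hensley--Lagarias--Ziegler finiteness theorem, which I treat as a black box. The one place demanding care is the reduction of a lower-dimensional integral polytope to a full-dimensional one: there the relevant notion of volume is the one normalized by the lattice that $\integer^d$ induces on $\aff(P)$, and I would make sure that a bound on $\vol(P)$ does translate into a bound on the volume of the resulting polytope after the unimodular change of coordinates before invoking Lagarias--Ziegler. Everything else is routine bookkeeping.
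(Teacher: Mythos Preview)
The paper does not supply a proof here; it records the statement as well-known and cites \cite[Theorem~2]{lagarias-ziegler-1991}. Your sketch is the standard argument behind that reference, and for $d$-dimensional polytopes --- the only case the paper actually uses --- it is correct.

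One assertion is false as written, though: an element of $\Aff(\integer^d)$ does \emph{not} preserve $j$-dimensional Lebesgue measure for $j<d$. The shear $(x,y)\mapsto(x+y,y)$ already sends the unit vertical segment to a segment of length $\sqrt{2}$. What a unimodular map does preserve is the volume normalized by the sublattice that $\integer^d$ induces on the affine hull. If $\vol$ is read as ordinary Euclidean volume in $\aff(P)$ (as the paper's notation section suggests), the ``finite classes $\Rightarrow$ bounded volume'' direction actually fails for lower-dimensional polytopes: the segments $\conv\{o,(1,n)\}\subseteq\real^2$ all lie in a single $\Aff(\integer^2)$-class yet have unbounded length. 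The fix is either to restrict to $d$-dimensional polytopes from the outset --- which is what \cite{lagarias-ziegler-1991} actually proves and all the paper needs --- or to work with lattice-normalized volume throughout, in which case both directions of your argument go through and the concern you flag in your last paragraph about the dimension reduction disappears as well.
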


In the remainder of this section we prepare the proof of the
following theorem.

\begin{theorem} \label{finite-volume}
Let $d,s \in \natur$.
Then there exists a constant $V(d,s) > 0$ such that for every
polytope $P$ in $\cPifm{s}{d}$ the inequality $\vol(P) \le
V(d,s)$ is fulfilled.
\end{theorem}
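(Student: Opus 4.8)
The plan is to follow exactly the three-step outline sketched right after the statement of Theorem~\ref{finiteness}, carrying out the middle step (bounding the lattice diameter) in detail and then invoking the quoted results of Hensley and Lagarias--Ziegler for the remaining steps. First I would reduce to the case where $P$ is a polytope: by Proposition~\ref{S=P+L}, an arbitrary $P \in \cPifm{s}{d}$ is equivalent to $P' \times \real^{d-k}$ for a polytope $P' \in \cPifm{s}{k}$ with $k \le d$, and $\vol$ is only meaningful for the polytope factor, so it suffices to bound the volume of polytopes in $\cPifm{s}{d}$ for all $d$ up to the one we started with. So assume $P$ is a polytope.

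The heart of the argument is to show that the lattice diameter of $P$ with respect to $\Lambda$ is bounded above by a constant $L(d,s)$. Suppose not: suppose some line $l$ meets $P \cap \Lambda$ in $M+1$ points with $M$ large. After a $\Lambda$-preserving transformation we may take $l = \lin(\{e_d\})$, so $P \cap l = [-a\, s\, e_d,\, b\, s\, e_d]$ with $a+b = M$ (after a further translation along $\Lambda$ we can assume $a,b$ are comparable, say both at least $M/2 - 1$). Let $P' = \pi(P)$ be the projection onto the first $d-1$ coordinates. Since $P$ is maximal $\Lambda$-free and $l$ is a genuine line inside $P$, the projection $P'$ cannot be $\pi(\Lambda)$-free — this is the content of the cited Lemma~\ref{projection non-free} — so there is a point $p \in \intr(P') \cap \pi(\Lambda)$. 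Now invoke the key combinatorial construction (cited Lemma~\ref{properties of tight}): there is a simplex $S = \conv\{p_0,\dots,p_k\} \subseteq \real^{d-1}$ with vertices in $\pi(\Lambda)$ such that $p \in \relintr(S)$ and $p$ is the \emph{only} point of $\pi(\Lambda)$ in $\relintr(S)$. Writing $p = \sum_i \lambda_i p_i$ with $\sum_i \lambda_i = 1$, $\lambda_i > 0$, the theorem of Hensley / Lagarias--Ziegler on simplices with exactly one interior lattice point (cited Theorem~\ref{hensley simplices with one point}) gives a lower bound $\lambda_i \ge c(d,s) > 0$ for each $i$.

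The contradiction now comes from a convexity/fibre-length estimate. Each vertex $p_i$ of $S$ lies in $P'$, hence is the projection of some point $q_i \in P$; in particular the segment $[-a\,s\,e_d,\, b\,s\,e_d]$ together with the points $q_0,\dots,q_k$ lies in $P$, so $P$ contains the convex hull of these. Over the point $p = \sum \lambda_i \pi(q_i)$ in the base, this convex hull — and hence $P$ — contains a segment in the $e_d$-direction whose length is at least $\min(\lambda_0 a, \lambda_0 b)\cdot \text{(gap in }P\cap l) \ge \lambda_0 \cdot \min(a,b) \cdot s$ roughly speaking (more carefully: the fibre of $\conv(\{-as e_d, bs e_d, q_0,\dots,q_k\})$ over $p$ has $e_d$-extent at least $c(d,s)\cdot \tfrac{M}{2}\cdot s$ up to a bounded additive error coming from the bounded $e_d$-coordinates of the $q_i$, which can themselves be taken bounded by replacing them with suitable lattice points). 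On the other hand $P$ is $\Lambda$-free, and a segment parallel to $e_d$ contained in $\intr(P)$ passing near $p \in \pi(\Lambda)$ can contain at most a bounded number of points of $\Lambda = s\integer^d$, so its length is at most $s$ (or a bounded multiple of $s$). Combining, $c(d,s)\cdot \tfrac{M}{2}\cdot s \le O(s)$, forcing $M \le L(d,s)$. This is the step I expect to require the most care: one must choose $p$ and the lifts $q_i$ so that the fibre estimate is clean and the "at most bounded length" assertion for the $\Lambda$-free polytope is rigorous, which is presumably why the paper isolates it into Lemmas~\ref{projection non-free}--\ref{properties of tight} and \ref{G bound}.

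Finally, a bound on the lattice diameter yields a bound on $|P \cap \Lambda|$: $P$ being maximal $\Lambda$-free, every facet of $P$ contains a point of $\Lambda$ in its relative interior, and a polytope all of whose lattice lines are short cannot have too many facets nor too many lattice points on its boundary (cited Lemma~\ref{G bound}); since $\intr(P) \cap \Lambda = \emptyset$, this bounds $|P \cap \Lambda|$ by a constant $G(d,s)$. Then the theorem of Hensley / Lagarias--Ziegler bounding the volume of a lattice polytope in terms of its number of lattice points and an interior-lattice-point condition (cited Theorem~\ref{hensley-finiteness}) gives $\vol(sP) \le C(d, G(d,s))$ for the integral polytope $sP$, hence $\vol(P) \le s^{-d}\, C(d,G(d,s)) =: V(d,s)$, completing the proof.
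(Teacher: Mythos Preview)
Your outline tracks the paper's strategy closely, but the final step contains a genuine gap. You write that Theorem~\ref{hensley-finiteness} ``gives $\vol(sP) \le C(d,G(d,s))$ for the integral polytope $sP$''. But Theorem~\ref{hensley-finiteness} applies only to polytopes with \emph{at least one} interior point in $\Lambda$, and $P$ is $\Lambda$-free, so $\intr(P)\cap\Lambda=\emptyset$. A bound on $|P\cap\Lambda|$ alone cannot control $\vol(P)$: already in dimension three the Reeve tetrahedra $\conv\{o,e_1,e_2,e_1+e_2+Ne_3\}$ have exactly four integer points yet volume $N/6$. The paper resolves this by an additional construction you have not supplied: it builds an auxiliary integral polytope $Q\supsetneq P$ with $\emptyset\ne\intr(Q)\cap\Lambda\subseteq P\cap\Lambda$ (via an iterative ``add an outside point, shrink back'' procedure), so that $1\le|\intr(Q)\cap\Lambda|\le N(d,s)$ and Theorem~\ref{hensley-finiteness} applies to $Q$, whence $\vol(P)\le\vol(Q)\le V(d,s,N(d,s))$.

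Your treatment of the lattice-diameter step is also imprecise in a way that matters. You produce a simplex $S$ with $p$ as its unique interior $\pi(\Lambda)$-point, but you do not arrange that $o=\pi(l)$ is a \emph{vertex} of $S$; without this, there is no reason the fibre over $p$ inherits length from the long fibre over $o$, and your attempt via arbitrary lifts $q_i$ does not recover it. The paper's point is that $P'$ belongs to $\cX^{d-1}(o)$, so one takes $Q\in\cR^{d-1}(o)$ with $\relintr(Q)\subseteq\intr(P')$; Lemma~\ref{properties of tight} then forces $o\in\vx(Q)$, and concavity of the fibre-length function $f$ gives directly $f(p)\ge\lambda_0 f(o)\ge\lambda^\ast(d,s)\cdot sM>s$, the desired contradiction. (Also, the vertices of $Q$ lie in $\integer^{d-1}$, not in $\pi(\Lambda)$; only the single interior point $p$ is required to be in $\pi(\Lambda)$.) Finally, the opening reduction to polytopes is unnecessary here: the theorem statement already assumes $P$ is a polytope.
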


Once, Theorem \ref{finite-volume} is proven, Theorem
\ref{finiteness} is a direct consequence of Proposition
\ref{unbounded lat-free}, Theorem
\ref{vol-bound<=>finiteness}, and Theorem
\ref{finite-volume}.

The proof of Theorem~\ref{finite-volume} relies on results
of Hensley \cite{hensley-1983} and Lagarias and Ziegler
\cite{lagarias-ziegler-1991}.
Hensley \cite{hensley-1983} showed that the volume and
the total number of integer points of a $d$-dimensional
integral polyhedron with precisely $k>0$ interior
integer points can be bounded in terms of $d$ and $k$ only.
Lagarias and Ziegler \cite{lagarias-ziegler-1991} improved
these bounds and generalized parts of Hensley's results.
In this paper, we shall use the main results as well as
some intermediate assertions from \cite{hensley-1983} and
\cite{lagarias-ziegler-1991}.

A polytope $S$ is said to be a \term{simplex} if $S$ is the
convex hull of finitely many affinely independent points.
If $S$ is a simplex in $\real^d$ with vertices
$p_0, \ldots, p_k$ ($0 \le k \le d$) and $p$ is a point in
$S$, then $p$ can be uniquely represented by
$p = \sum_{j=0}^k \lambda_j p_j$, where $\lambda_0, \ldots,
\lambda_k \ge 0$ and $\lambda_0 + \cdots + \lambda_k = 1$.
The values $\lambda_0, \ldots, \lambda_k$ are called the
\term{barycentric coordinates} of $p$ with respect to the
simplex $S$.
The point $p$ lies in the relative interior of $S$
if and only if $\lambda_0, \ldots, \lambda_k > 0$.

\begin{theorem}
  \thmcaption{\cite[Theorem~3.1]{hensley-1983} and
    \cite[Lemma\,2.2]{lagarias-ziegler-1991}} \label{hensley
    simplices with one point}
Let $d,s \in \natur$.
Then there exists a constant $\lambda^\ast(d,s) > 0$ such
that, for every $d$-dimensional integral simplex $S$ in
$\real^d$ with precisely one interior point $p$ in
$s\integer^d$, all barycentric coordinates $\lambda_i$
$(i=0,\ldots,d)$ of $p$ with respect to $S$ satisfy
$\lambda_i \ge \lambda^\ast(d,s)$.
\end{theorem}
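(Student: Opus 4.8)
The plan is to reduce the statement about general $d$-dimensional integral simplices to the case of full-dimensional simplices with exactly one interior $s\integer^d$-point, and then to invoke the Hensley--Lagarias--Ziegler machinery. More precisely, after translating we may assume $p\in s\integer^d$ is the unique point of $s\integer^d$ in $\relintr(S)$, and after applying a $\Lambda$-preserving transformation we may assume $p=o$. Now the crucial classical fact is that the number of $d$-dimensional integral simplices $S$ with exactly one interior point of $s\integer^d$ is finite up to $\Aff(\Lambda)$ --- this is exactly what Hensley \cite{hensley-1983} and Lagarias--Ziegler \cite{lagarias-ziegler-1991} prove (their bound on the volume of such simplices, together with Theorem~\ref{vol-bound<=>finiteness}, yields finiteness of the equivalence classes). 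Since barycentric coordinates are affine invariants, each of the finitely many equivalence classes contributes finitely many values $\lambda_i$, and each such value is strictly positive because $p$ lies in the \emph{relative} interior. Taking $\lambda^\ast(d,s)$ to be the minimum of this finite set of positive numbers gives the claim.

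Concretely, I would carry out the argument in the following steps. First, observe that the hypothesis ``$S$ is $d$-dimensional with exactly one interior point $p\in s\integer^d$'' is preserved under the action of $\Aff(\Lambda)$, and that the multiset of barycentric coordinates $\{\lambda_0,\ldots,\lambda_d\}$ of $p$ with respect to $S$ is also invariant under $\Aff(\Lambda)$ (an affine bijection sends the vertices of $S$ to the vertices of $TS$ and fixes the barycentric representation of the image point). Second, cite \cite[Theorem~3.1]{hensley-1983} and \cite[Lemma~2.2]{lagarias-ziegler-1991} for the volume bound: there is $V_0(d,s)$ with $\vol(S)\le V_0(d,s)$ for every such $S$. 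Third, apply Theorem~\ref{vol-bound<=>finiteness} with $\cX$ the set of all such simplices to conclude that there are only finitely many equivalence classes $S_1,\ldots,S_m$ modulo $\Aff(\Lambda)$. Fourth, for each representative $S_i$ let $\mu_i>0$ be the smallest barycentric coordinate of its unique interior $s\integer^d$-point (positive because the point is in the relative interior, hence all barycentric coordinates are strictly positive). Finally set $\lambda^\ast(d,s):=\min_{1\le i\le m}\mu_i>0$; then for an arbitrary admissible $S$, equivalence to some $S_i$ gives $\lambda_j\ge\mu_i\ge\lambda^\ast(d,s)$ for all $j$.

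One should also handle the minor bookkeeping that the theorem speaks of ``a $d$-dimensional integral simplex $S$ in $\real^d$'', so $S$ is automatically full-dimensional and has exactly $d+1$ vertices; no reduction from lower-dimensional faces is needed. The only subtlety is making sure the cited results are applied in the right normalization: Hensley and Lagarias--Ziegler typically state their bounds for $s=1$ (one interior \emph{lattice} point), so one passes to the lattice $\Lambda=s\integer^d$ and uses the affine invariance already emphasized in Section~\ref{sec.finiteness} (``due to affine invariance, the obtained results are independent of the concrete choice of the lattice''), or equivalently rescales by $1/s$.

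The main obstacle --- really the only nontrivial input --- is the finiteness (equivalently, the volume bound) for integral simplices with a single interior lattice point; but this is precisely the content of the results of \cite{hensley-1983} and \cite{lagarias-ziegler-1991} that we are permitted to assume, so the proof is a short deduction from them. Everything else is routine invariance bookkeeping and taking a minimum over a finite set.
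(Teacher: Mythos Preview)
The paper does not supply its own proof of this statement; it is quoted from the literature (the caption already attributes it to \cite[Theorem~3.1]{hensley-1983} and \cite[Lemma~2.2]{lagarias-ziegler-1991}) and used as a black box in the proof of Lemma~\ref{G bound}. So there is no ``paper's proof'' to compare against.

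That said, your proposed derivation has a circularity problem relative to the cited sources. In both \cite{hensley-1983} and \cite{lagarias-ziegler-1991} the logical order is the reverse of yours: the lower bound on the barycentric coordinates is established \emph{first}, by a direct arithmetic argument (ultimately controlled by the Sylvester--type recursion that also appears in the Remark after Theorem~\ref{finiteness}), and the volume bound --- what the paper records as Theorem~\ref{hensley-finiteness} --- is then \emph{deduced} from it. In particular, the references you invoke ``for the volume bound'', namely \cite[Theorem~3.1]{hensley-1983} and \cite[Lemma~2.2]{lagarias-ziegler-1991}, are exactly the barycentric statements you are trying to prove, not volume statements; the volume results are \cite[Theorem~3.6]{hensley-1983} and \cite[Theorem~1]{lagarias-ziegler-1991}. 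So as written, your argument assumes (through its known consequence) the very thing to be shown. If one is willing to take Theorem~\ref{hensley-finiteness} as an independent black box, then your finiteness-plus-minimum argument is a correct existence proof of \emph{some} $\lambda^\ast(d,s)>0$; but that is not how the literature is organized, and it also forfeits the explicit bound $(7(s+1))^{-2^{d+1}}$ that the paper later relies on in the Remark on growth of constants.
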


Note that, in the formulation of Theorem \ref{hensley
simplices with one point}, $\lambda^\ast(d,s)$ is not
necessarily best possible.
Once some $\lambda^\ast(d,s)$ is known, then any smaller
positive constant works as well.
Thus, it is always possible and will be convenient later to
choose the values $\lambda^\ast(d,s)$ to be nonincreasing
in $d \in \natur$.
In fact, the best known concrete values for
$\lambda^\ast(d,s)$, given in
\cite[Lemma\,2.2]{lagarias-ziegler-1991}, are nonincreasing
in $d$.

\begin{theorem}
  \thmcaption{\cite[Theorem~3.6]{hensley-1983} and
    \cite[Theorem\,1]{lagarias-ziegler-1991}} \label{hensley-finiteness}
Let $d,s,k \in \natur$ and let $\Lambda = s\integer^d$.
Let $\cX$ be the class of all $d$-dimensional polytopes $P
\in \cPi^d$ with $1 \le |\intr(P) \cap \Lambda| \le k$.
Then there exists a constant $V(d,s,k) > 0$ such that for
every $P \in \cX$ one has $\vol(P) \le V(d,s,k)$.
In particular, $\cX / \Aff(\Lambda)$ is a finite set.
\end{theorem}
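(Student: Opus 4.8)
The final clause of the theorem is immediate from Theorem~\ref{vol-bound<=>finiteness} once the uniform bound $\vol(P)\le V(d,s,k)$ is established, so the whole task is to produce such a bound; the plan is to follow the method of Hensley and of Lagarias--Ziegler. A preliminary remark disposes of the role of $s$: since $\Lambda=s\integer^d$ is a sublattice of $\integer^d$ of index $s^d$ and each $P\in\cX$ is a $\integer^d$-integral polytope with $1\le\card{\intr(P)\cap\Lambda}\le k$, a pigeonhole argument along long $\Lambda$-segments in $P$ (using convexity) bounds the lattice width $w_\Lambda(P)$ in terms of $d,s,k$; moreover the barycentric constant $\lambda^\ast(d,s)$ of Theorem~\ref{hensley simplices with one point} already carries the dependence on $s$. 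So it is enough to run the argument with the counting lattice fixed, $s$ only inflating the eventual bound by a factor of order $s^d$.

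The heart of the matter is the case of a $d$-dimensional lattice simplex $S$ with exactly one interior point $q$ of $\Lambda$. By Theorem~\ref{hensley simplices with one point} its barycentric coordinates satisfy $\lambda_i\ge\lambda^\ast(d,s)>0$ for $i=0,\dots,d$. Writing $\lambda_i=\vol(S_i)/\vol(S)$, where $S_i$ is the lattice simplex obtained from $S$ by replacing its $i$th vertex by $q$, one has $\vol(S)=\vol(S_i)/\lambda_i\le\vol(S_i)/\lambda^\ast(d,s)$, so it remains to bound $\vol(S_i)$. This is done by an induction on $d$: the facet $F_i$ of $S$ opposite the $i$th vertex is a lattice simplex of dimension $d-1$, $S_i$ is a cone over $F_i$ with apex $q$, and the $(d-1)$-dimensional and one-dimensional contributions can be controlled using the lattice-width bound from the first paragraph together with the induction hypothesis applied to a sub-configuration sitting in $F_i$. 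This yields $\vol(S)\le V(d,s,1)$.

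Two extensions remain, both carried out in \cite{hensley-1983,lagarias-ziegler-1991}. First, the case $k\ge2$ is reduced to smaller $k$ by cutting $S$ (or $P$) with a lattice hyperplane through one of the interior $\Lambda$-points into pieces, triangulating, and summing the volumes supplied by the induction hypothesis — with an ad hoc argument for pieces that lose all of their interior $\Lambda$-points. Second, an arbitrary polytope $P\in\cX$ is reduced to the simplex case: fixing $p\in\intr(P)\cap\Lambda$ and translating so that $p=o$, Carath\'eodory's theorem produces a full-dimensional lattice simplex $S\subseteq P$ spanned by $d+1$ vertices of $P$ with $o\in\intr(S)$, hence with $1\le\card{\intr(S)\cap\Lambda}\le k$; a covering argument then shows that, since $S$ contains a full-dimensional neighbourhood of $o$, no vertex of $P$ can lie far outside a bounded dilate of $-S$ without forcing more than $k$ points of $\Lambda$ into $\intr(P)$, whence $\vol(P)\le C(d)\,\vol(S)\le C(d)\,V(d,s,k)$. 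Theorem~\ref{vol-bound<=>finiteness} then gives finiteness of $\cX/\Aff(\Lambda)$.

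The main obstacle is closing the dimension induction in the second paragraph and the $k$-induction in the third: an auxiliary simplex $S_i$, or a piece obtained by cutting $S$, need not itself contain an interior point of $\Lambda$, so the theorem cannot be quoted directly one dimension, or one value of $k$, lower. Controlling these degenerate configurations is exactly what forces the quantitatively sharp, Sylvester-sequence-type barycentric estimates of \cite{hensley-1983,lagarias-ziegler-1991}; by comparison the covering argument for general polytopes is a milder technical point, and the reduction from arbitrary $s$ is routine bookkeeping.
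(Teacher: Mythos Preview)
The paper does not prove this theorem: it is quoted from \cite{hensley-1983} and \cite{lagarias-ziegler-1991} as a known result, with no argument supplied. There is therefore no ``paper's own proof'' against which to compare your attempt.

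Your write-up is explicitly a sketch, and you correctly flag its main gap in the last paragraph. A few further issues are worth noting. In the second paragraph, the proposed dimension induction does not get off the ground as written: to bound $\vol(S_i)$ you appeal to ``the induction hypothesis applied to a sub-configuration sitting in $F_i$'', but the facet $F_i$ has no reason to contain any interior $\Lambda$-point, so there is nothing to feed into a lower-dimensional instance of the theorem. The actual argument in \cite{lagarias-ziegler-1991} bounds $\vol(S)$ directly from the barycentric inequalities $\lambda_i\ge\lambda^\ast(d,s)$ together with $\sum_i\lambda_i=1$, via the Sylvester-sequence analysis you allude to at the end, rather than by descent on $d$. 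In the third paragraph, Carath\'eodory only places the chosen interior point $p$ in the convex hull of $d+1$ vertices of $P$, not in the \emph{interior} of that simplex (for a square with $p$ at the centre, every sub-triangle on three of the vertices has $p$ on its boundary), so this step also needs repair; the reduction from polytopes to simplices in the cited papers is organised differently. Finally, the first-paragraph remarks about eliminating $s$ by pigeonholing along $\Lambda$-segments are too vague to assess, though the observation that the $s$-dependence is already carried by $\lambda^\ast(d,s)$ is correct.
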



We have mentioned all results from the literature that are
needed to prove Theorem \ref{finite-volume}.
Let us now show our assertion.
We point out that in the remainder of this section, for all
statements and proofs, we always assume $\Lambda = s \Z^d$.

Let $a \in \Lambda$ and let $\cX^d(a)$ be the class of
all polyhedra $P \in \cPi^d$ such that $a \in \relbd(P)$ and
$\relintr(P) \cap \Lambda \ne \emptyset$.
On $\cX^d(a)$ we introduce the partial order $\preceq$ as
follows: for $P, Q \in \cX^d(a)$ we define $P \preceq Q$ if
and only if $\relintr(P) \subseteq \relintr(Q)$.
Let us verify that the binary relation $\preceq$ is indeed a
partial order.
The property $P \preceq P$ is obvious.
If $P \preceq Q$ and $Q \preceq P$, then $\relintr(P) =
\relintr(Q)$.
Since $P$ and $Q$ are closed it follows $P = Q$.
If $P \preceq Q$ and $Q \preceq R$, then $\relintr(P)
\subseteq \relintr(Q) \subseteq \relintr(R)$.
Thus $P \preceq R$.

By $\cR^d(a)$ we denote the set of the minimal elements of
the poset $(\cX^d(a),\preceq)$, i.e., the set of the
elements $Q \in \cX^d(a)$ such that there exists no $P \in
\cX^d(a)$ with  $P \preceq Q$ and $P \ne Q$.
We emphasize that elements of $\cX^d(a)$ and $\cR^d(a)$ do
not have to be full-dimensional.
It is not hard to verify that for every $P \in \cX^d(a)$
there exists $Q \in \cR^d(a)$ such that $Q \preceq P$.
If $P$ is bounded, this follows from the fact that the set
of all $Q \in \cX^d(a)$ satisfying $Q \preceq P$ is finite
as $\card{P \cap \integer^d} < +\infty$.
If $P$ is unbounded we replace $P$ by $\bar{P} = \conv(P
\cap B \cap \integer^d)$, where $B$ is a sufficiently large
box centered at $a$ and such that $\relintr(\bar{P}) \cap
\Lambda \ne \emptyset$.
Then we apply the argument for the bounded case to
$\bar{P}$.
We remark that for $P, Q \in \cX^d(a)$ the condition
$\relintr(P) \subseteq \relintr(Q)$ holds if and only if one
has $P \subseteq Q$ and $\relintr(P) \cap \relintr(Q) \ne
\emptyset$.
This follows from standard results in convexity (see, for
example, \cite[Theorem\,6.5]{Rockafellar72}).

It turns out that the elements of $\cR^d(a)$ have a very
specific shape which is described as follows.

\begin{lemma} \label{properties of tight}
Let $a \in \Lambda$ and $P \in \cR^d(a)$.
Then $P$ has the following properties.
  \begin{enumerate}[I.]
    \item \label{a-red is simplex} $P$ is a simplex of
      dimension $k \in \{1,\ldots,d\}$.
    \item \label{a-red a is vertex} $a \in \vx(P)$.
    \item \label{a-red only one int}$\relintr(P) \cap
      \Lambda$ consists of precisely one point.
    \item \label{a-red opp is empty} The facet $F$ of $P$
      opposite to the vertex $a$ satisfies $F \cap
      \integer^d = \vx(F)$.
  \end{enumerate}
\end{lemma}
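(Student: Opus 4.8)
### Proof proposal for Lemma \ref{properties of tight}

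The plan is to analyze a minimal element $P \in \cR^d(a)$ by repeatedly removing structure from $P$ while staying inside the poset $(\cX^d(a),\preceq)$, and using minimality to conclude that no such removal is possible. First I would establish property \ref{a-red is simplex} together with the weaker statement that $P$ is bounded. Since $P \in \cX^d(a)$, we may pick a point $q \in \relintr(P) \cap \Lambda$. The set $P \cap \integer^d$ spans $\aff(P)$ (because $P$ is integral), so we can choose finitely many integer points $a = v_0, v_1, \dots, v_m$ in $P$ whose affine hull is $\aff(P)$ and with $q$ in the relative interior of $\conv\{v_0,\dots,v_m\}$. By Carath\'eodory-type thinning, I would extract an affinely independent subfamily, i.e.\ a simplex $S = \conv\{v_{i_0}, \dots, v_{i_k}\}$ with $a$ among its vertices and $q \in \relintr(S)$; this is the standard fact that a point in the relative interior of a polytope lies in the relative interior of some simplex spanned by vertices of the polytope, and one can always keep a prescribed vertex ($a$) in that simplex by expressing $q - a$ appropriately. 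Then $S \in \cX^d(a)$, $S \preceq P$ (using the remark that $\relintr(S) \subseteq \relintr(P)$ is equivalent to $S \subseteq P$ together with $\relintr(S) \cap \relintr(P) \neq \emptyset$, which holds since $q$ lies in both), and $S$ is bounded. Minimality of $P$ forces $P = S$, proving \ref{a-red is simplex}, \ref{a-red a is vertex}, and boundedness simultaneously.

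Next I would prove property \ref{a-red only one int}, that $\relintr(P) \cap \Lambda$ is a single point. Suppose it contained two distinct points $p, p'$ of $\Lambda$. Writing $P = \conv\{a = p_0, p_1, \dots, p_k\}$ as a $k$-simplex, consider the barycentric coordinates $\lambda_0, \dots, \lambda_k$ of $p$ and $\mu_0, \dots, \mu_k$ of $p'$, all strictly positive. The idea is to shrink one of the facets: replace the vertex $p_i$ (for a suitable $i \neq 0$) by a point on the segment $[p_0, p_i]$ or more generally contract $P$ toward a facet so that the new smaller simplex still contains $p$ in its relative interior but no longer contains $p'$. Concretely, consider the facet $F$ opposite $p_0 = a$ and the family of simplices $P_t = \conv(\{a\} \cup ((1-t) F + t \cdot c))$ for a point $c$ in $F$ and $t$ increasing from $0$; as $t$ grows the simplex shrinks within $P$ keeping $a$ as a vertex. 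Because $p$ and $p'$ have distinct barycentric coordinates, there is a direction of contraction that expels $p'$ from the relative interior strictly before it expels $p$; choosing a contraction with rational (indeed suitably chosen) data we land on an integral simplex $Q$ with $a \in \vx(Q)$, $Q \subsetneq P$, $p \in \relintr(Q)$, hence $Q \in \cX^d(a)$ and $Q \prec P$, contradicting minimality. The one subtlety is ensuring $Q$ is \emph{integral}: since we only need $Q = \conv(Q \cap \integer^d)$, I would instead take $Q$ to be the convex hull of $a$ together with the integer points of $P$ lying in a slightly shrunk copy of $P$; this is automatically integral, still contains $p$ in its relative interior for a small enough shrink, and misses $p'$. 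I expect this integrality bookkeeping to be the main obstacle — one has to shrink toward the facet opposite $a$ in the right direction and verify the integer hull behaves as desired — but it is a routine compactness/continuity argument.

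Finally, property \ref{a-red opp is empty}: the facet $F$ opposite $a$ satisfies $F \cap \integer^d = \vx(F)$. Here I would argue by contradiction: if $F$ contained an integer point $w$ that is not a vertex of $F$, then $w$ is a convex combination of the vertices $p_1, \dots, p_k$ of $F$ with at least two positive coefficients. Replace one vertex, say $p_j$, by $w$, forming $P' = \conv(\{a, p_1, \dots, p_{j-1}, w, p_{j+1}, \dots, p_k\})$. By a standard argument $P'$ is again a simplex of the same dimension (one checks affine independence using that $w$ lies in the interior of a face of dimension $\geq 1$), $P' \subsetneq P$, and $a \in \vx(P')$; moreover one can choose $j$ so that the unique interior lattice point $p$ of $P$ still lies in $\relintr(P')$ — this uses that $p$ has all barycentric coordinates positive and $w$ has positive weight on at least two of the $p_i$, so $P'$ still "covers" $p$ from the $a$-side and the $F$-side. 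Then $P' \in \cX^d(a)$ and $P' \prec P$, again contradicting minimality; as before the integral version is obtained by taking $P'$ to be the convex hull of $\{a\} \cup (\text{appropriate integer points})$. The delicate point in this last step, and the place where I would spend the most care, is checking that some admissible vertex replacement keeps $p$ in the relative interior; I would handle it by an averaging argument over the possible choices of $j$, or by noting that $p \in \relintr(\conv(\{a\} \cup (F \cap \integer^d)))$ and this latter simplex-free polytope must, by \ref{a-red is simplex}, already be a simplex, forcing $F \cap \integer^d = \vx(F)$ directly.
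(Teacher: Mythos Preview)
Your treatment of Parts~\ref{a-red is simplex} and~\ref{a-red a is vertex} is essentially the paper's argument: shoot a ray from $a$ through a chosen $q\in\relintr(P)\cap\Lambda$ to the relative boundary, apply Carath\'eodory on the facet hit, and adjoin $a$. Your phrase ``Carath\'eodory-type thinning with a prescribed vertex'' encodes exactly this ray construction.

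For Part~\ref{a-red only one int} there is a genuine gap. Your plan is to shrink $P$ and pass to an integer hull, asserting that ``for a small enough shrink'' the unique point $p$ remains in the relative interior of $Q=\conv(\{a\}\cup(P_t\cap\integer^d))$. This is not a continuity statement: as soon as $t>0$, the vertices $p_1,\ldots,p_k$ on the facet $F$ opposite $a$ are lost, and $Q$ drops to $\conv\bigl((P\cap\integer^d)\setminus (F\cap\integer^d)\bigr)$, which may have $p$ as a vertex. Concretely, take $d=2$, $\Lambda=\integer^2$, $a=(0,0)$ and $P=\conv\{(0,0),(1,2),(3,1)\}$. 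Then $P\cap\integer^2=\{(0,0),(1,2),(3,1),(1,1),(2,1)\}$ with interior $\Lambda$-points $p=(1,1)$ and $p'=(2,1)$. Any positive shrink of the opposite edge removes $(1,2)$ and $(3,1)$, and your $Q$ becomes the triangle $\conv\{(0,0),(1,1),(2,1)\}$, in which $p=(1,1)$ is a vertex, not a relative interior point. So your construction does not produce an element of $\cX^d(a)$ strictly below $P$. The paper handles this differently: it forms the barycentric decomposition $P=P_0\cup\cdots\cup P_k$ with $P_j=\conv(\{q,q_0,\ldots,q_k\}\setminus\{q_j\})$, shows (using minimality) that the second point $q'$ must lie in $P_0$, then builds $Q:=\conv\bigl((P_0\cap\integer^d)\setminus\{q\}\bigr)$ and re-runs the ray-plus-Carath\'eodory argument inside $Q$ to extract a strictly smaller simplex through $a$ and $q$. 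The point is that $q'$ furnishes integer points of $P_0$ near the ``far'' boundary facet, which is exactly what your shrink idea lacks.

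For Part~\ref{a-red opp is empty}, the single vertex replacement $p_j\mapsto w$ need not keep $p$ in the relative interior of $P'$: if $\lambda_1=\lambda_2$ and $w$ is the midpoint of $[p_1,p_2]$, then after either replacement $p$ lands on the edge $[a,w]$. Your proposed fix ``$\conv(\{a\}\cup(F\cap\integer^d))$ must be a simplex'' does not help, since that set equals $P$, which \emph{is} a simplex. A correct repair along your lines is to take, after the replacement, the face $G$ of $P'$ with $p\in\relintr(G)$; since the $a$-coefficient $\lambda_0$ is positive one has $a\in\vx(G)$, and $G\varsubsetneq P$ gives the contradiction. The paper instead triangulates $F$ on the set $F\cap\integer^d$, locates the cell $S_j$ meeting $\ray{a,q}$, and uses $\conv(\{a\}\cup G)$ for the face $G$ of $S_j$ containing the hit point --- this is your idea made precise without the case analysis on the $\lambda_j/\mu_j$.
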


\begin{proof}
Let $P \in \cR^d(a)$ and $q \in \relintr(P) \cap \Lambda$
be arbitrary.
We consider $2 q -a$ (the reflection of $a$ with respect to
$q$).
First assume that $2q - a \in P$.
Then $q \in \relintr(P) \cap \relintr([a,2q-a])$ and
$[a,2q-a] \subseteq P$.
Thus, since $P \in \cR^d(a)$ we have $P=[a,2q-a]$.
Again, since $P \in \cR^d(a)$, $q$ is the only point of
$\Lambda$ in $\relintr(P)$.
For such a $P$,
Parts~\ref{a-red is simplex}--\ref{a-red opp is empty}
follow immediately.
In the remainder of the proof let $2q- a \not \in P$.

\emph{Parts~\ref{a-red is simplex} and \ref{a-red a is
    vertex}.}
Let $b$ be the intersection point of $\ray{a,q}$ and
$\relbd(P)$.
Since $q \in \relintr([a,b])$ we have $q=(1-\lambda) a +
\lambda b$ for some $0 < \lambda < 1$.
Consider a facet $F$ of $P$ which contains $b$.
Since $P$ is integral, also $F$ is integral, i.e.,
$F = \conv (F \cap \integer^d)$.
By Carath\'eodory's theorem, there exist affinely
independent points $q_1, \ldots, q_k \in F \cap \integer^d$
such that $b = \lambda_1 q_1 + \cdots + \lambda_k q_k$ for
some $\lambda_1,\ldots,\lambda_k > 0$ with $\lambda_1 +
\cdots + \lambda_k = 1$.
Thus, $q = (1-\lambda) q_0 + \lambda \lambda_1 q_1 \cdots +
\lambda \lambda_k q_k$, where $q_0 := a$.
The point $a = q_0$ does not belong to $\aff(F)$.
In fact, otherwise $a \in P \cap \aff(F) = F$ and since $b
\in F$ we get $q \in F$, a contradiction to $q \in
\relintr(P)$.
Hence $q_0,\ldots,q_k$ are affinely independent.
Since $P \in \cR^d(a)$, we have
$P = \conv(\{q_0,\ldots,q_k\})$.
Hence $P$ is a simplex and $a$ is a vertex of $P$.

In the remainder of the proof let $P = \conv(\{q_0, \ldots,
q_k\})$ with $q_0 := a$ and $q_1, \ldots, q_k$ defined as
above.

\emph{Part~\ref{a-red only one int}.}
For $j=0,\ldots,k$ let $P_j$ be the simplex with vertices
$\{q,q_0,\ldots,q_k\} \setminus \{q_j\}$.
It can be verified with straightforward arguments that
$P = P_0 \cup \cdots \cup P_k$ and the relative interiors of
the simplices $P_j$ are pairwise disjoint.
For proving Part~\ref{a-red only one int}, we argue by
contradiction.
We assume that $\relintr(P) \cap \Lambda$ contains $q'$ with
$q' \ne q$.
First we show that $q' \in P_0$.
Assume the contrary.
Then $q' \in P_j$ for some $j \in \{1, \ldots, k\}$.
Let $F$ be the face of $P_j$ with $q' \in \relintr(F)$.
Since $q' \not\in P_0$, $a$ is a vertex of $F$.
The existence of $F \varsubsetneq P$ with $q' \in
\relintr(F)$ and $a \in \vx(F)$ contradicts the fact that $P
\in \cR^d(a)$.
Hence $q' \in P_0$.
We define $Q:= \conv ((P_0 \cap \integer^d) \setminus
\{q\})$.
Since $q' \in \relintr(P)$, and $q', q_1,\ldots,q_k \in Q$,
the polytope $Q$ has the same dimension as $P$.
We have $\ray{a,q} \cap Q = [b,b']$, where $b \in
\relintr(P)$ and $b' \in \relbd(P)$.
Since $q \in \relintr([a,b])$ one has $q = (1-\lambda)a +
\lambda b$ for some $0 < \lambda < 1$.
Let now $G$ be the facet of $Q$ containing $b$.
The point $a=q_0$ does not belong to $\aff(G)$.
In fact, otherwise $\aff(G)$ would contain $[b,b']$, which
implies $\aff (G) \cap \relintr Q \ne \emptyset$, a
contradiction.
Using Carath\'eodory's theorem, let $p_1,\ldots,p_m$ be
affinely independent vertices of $G$ such that $b=\lambda_1
p_1 + \cdots + \lambda_m p_m$ for some
$\lambda_1,\ldots,\lambda_m > 0$ with $\lambda_1 + \cdots +
\lambda_m = 1$.
Then $q = (1-\lambda)p_0 + \lambda \lambda_1 p_1 + \cdots +
\lambda \lambda_m p_m$ with $p_0 := a$.
Since $p_0 \not \in \aff(G)$ and since $p_1, \ldots, p_m \in
G$ are affinely independent, we see that $p_0, \ldots, p_m$
are affinely independent.
The simplex $S = \conv(\{p_0, \ldots, p_m\})$ is properly
contained in $P$, contains $a$ on its relative boundary and
satisfies $q \in \relintr(S) \cap \relintr(P)$, a
contradiction to the fact that $P \in \cR^d(a)$.
This shows Part~\ref{a-red only one int}.

\emph{Part~\ref{a-red opp is empty}}.
We argue by contradiction.
Let $F$ be the facet of $P$ opposite to $a$ and assume that
$\vx(F) \varsubsetneq F \cap \integer^d$.
Let $S_1, \ldots, S_m$ be a triangulation constructed on the
points $F \cap \integer^d$.
Then, $S_1, \ldots, S_m$ are simplices with pairwise
disjoint interiors having the same dimension as $F$ and such
that $F \cap \integer^d = \bigcup_{i=1}^m \vx(S_i)$, $F =
\bigcup_{i=1}^m S_i$, and for every $S_i$, $\vx(S_i)$ are the
only integer points in $S_i$.
By assumption, we have $S_i \ne F$ for every $i$.

Then there exists a simplex $S_j$ such that $\ray{a,q} \cap
S_j$ is nonempty.
Let $b$ be the point $\ray{a,q} \cap S_j$.
Further on, let $G$ be the face of $S_j$ with $b \in
\relintr(G)$.
By construction, $q \in \relintr(\bar{P})$ where $\bar{P} :=
\conv (\{a\} \cup G)$ and $\bar{P} \varsubsetneq P$.
This contradicts the fact that $P \in \cR^d(a)$.
\end{proof}

Lemma \ref{properties of tight} and the following Lemma
\ref{projection non-free} are used in the proof of Lemma
\ref{G bound}.

\begin{lemma} \label{projection non-free}
Let $P \in \cPifm{s}{d}$ be a polytope.
Then $\intr(\pi(P)) \cap \pi(\Lambda) \ne \emptyset$.
\end{lemma}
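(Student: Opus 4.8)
The statement is that for a polytope $P \in \cPifm{s}{d}$, the interior of the projection $\pi(P)$ onto the first $d-1$ coordinates contains a point of $\pi(\Lambda) = s\integer^{d-1}$. The natural strategy is proof by contradiction: assume $\intr(\pi(P)) \cap \pi(\Lambda) = \emptyset$ and show that $P$ fails to be inclusion-maximal among $\Lambda$-free integral polytopes, i.e., construct a strictly larger $\Lambda$-free integral polyhedron containing $P$.

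First I would record the basic geometric fact that $\pi(P)$ is itself a polytope in $\real^{d-1}$ with nonempty interior (since $P$ is $d$-dimensional — note that $P \in \cPifm{s}{d}$ forces $\intr(P) \ne \emptyset$, as a lower-dimensional polytope is never inclusion-maximal among $\Lambda$-free sets). Under the contradiction hypothesis, $\pi(P)$ is a $(d-1)$-dimensional $\pi(\Lambda)$-free polytope. The key observation is then that $\pi^{-1}(\pi(P))$ — an infinite ``slab'' (prism over $\pi(P)$ in the $e_d$-direction) — is $\Lambda$-free: a point $x \in \intr(\pi^{-1}(\pi(P))) \cap \Lambda$ would project to a point of $\pi(\Lambda)$ in $\intr(\pi(P))$, contradicting the hypothesis. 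Moreover $\pi^{-1}(\pi(P))$ is an integral polyhedron, since $\pi(P)$ is integral (it is the image of an integral polytope under the coordinate projection $\pi$, which maps $\integer^d$ onto $\integer^{d-1}$, so $\pi(P) = \pi(\conv(P \cap \integer^d)) = \conv(\pi(P) \cap \integer^{d-1})$), and taking the preimage under $\pi$ preserves integrality.

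Finally I would argue that $P \subsetneq \pi^{-1}(\pi(P))$ \emph{properly}: the containment $P \subseteq \pi^{-1}(\pi(P))$ is immediate, and it is proper because $P$ is bounded while $\pi^{-1}(\pi(P))$ is unbounded (it contains a whole line in the $e_d$-direction through any point). Thus $\pi^{-1}(\pi(P))$ is a $\Lambda$-free integral polyhedron strictly containing $P$, contradicting $P \in \cPifm{s}{d}$. This completes the proof.

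The main thing to be careful about is the integrality claim for $\pi(P)$ and its preimage; everything else is routine. One must use that $\pi$ is a \emph{coordinate} projection, so it carries $\integer^d$ onto $\integer^{d-1}$ and commutes with taking convex hulls of the lattice points, which is exactly what $\conv(P\cap\integer^d) = P$ needs in order to descend to $\conv(\pi(P)\cap\integer^{d-1}) = \pi(P)$; and then $\pi^{-1}$ of an integral polyhedron is integral because adding the free direction $\lin(\{e_d\})$ keeps all vertices integral and introduces the integral recession direction $e_d$. I expect no serious obstacle — the content is entirely in setting up the right larger polyhedron and verifying it lies in $\cPif{s}{d}$.
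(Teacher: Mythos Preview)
Your proposal is correct and follows essentially the same argument as the paper: assume $\intr(\pi(P))\cap\pi(\Lambda)=\emptyset$, observe that $\pi^{-1}(\pi(P))$ is then a $\Lambda$-free integral polyhedron containing $P$, and derive a contradiction from maximality together with the boundedness of $P$. The paper's proof is a one-sentence version of yours; your additional care in justifying the integrality of $\pi(P)$ and of $\pi^{-1}(\pi(P))$ fills in details the paper leaves implicit.
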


\begin{proof}
If $P' := \pi(P)$ satisfies $\intr(P') \cap \pi(\Lambda) =
\emptyset$, then $\pi^{-1}(P')$ is $\Lambda$-free and
integral, and then in view of the maximality of $P$, one has
$\pi^{-1}(P') \subseteq P$ which contradicts the boundedness
of $P$.
\end{proof}

In the following lemma we prove that the number of points of
$\Lambda$ on the boundary of a polytope $P \in \cPifm{s}{d}$
is bounded by a constant which is dependent only on $d$ and
$s$.

\begin{lemma} \label{G bound}
Let $d,s \in \natur$.
Then there exists a constant $N(d,s) > 0$ such that every
polytope $P \in \cPifm{s}{d}$ contains at most $N(d,s)$
points in $\Lambda$.
\end{lemma}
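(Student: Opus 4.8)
The statement to prove is Lemma~\ref{G bound}: a uniform bound $N(d,s)$ on $|P \cap \Lambda|$ for every polytope $P \in \cPifm{s}{d}$. Since $\Lambda = s\Z^d$ and $\intr(P)$ contains no point of $\Lambda$, every lattice point of $P$ lies on $\relbd(P)$, so it is really the boundary lattice points that must be counted. The plan is to bound the \emph{lattice diameter} $\ell(P)$ of $P$ (the maximum over all lines $l$ of $|l \cap P \cap \Lambda| - 1$) by a constant $D(d,s)$ depending only on $d$ and $s$, and then deduce the bound on $|P \cap \Lambda|$ from it. The second implication is the routine part: a classical argument (e.g.\ via projections and induction on dimension, using that each one-dimensional fiber meets $\Lambda$ in at most $D(d,s)+1$ points) shows $|P \cap \Lambda| \le (D(d,s)+1)^d$ or a similar polynomial expression. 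So the heart of the matter is bounding the lattice diameter.

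\medskip

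\textbf{Bounding the lattice diameter.} Suppose for contradiction that some line $l$ satisfies $|l \cap P \cap \Lambda| - 1 = M$ for a large integer $M$ to be specified. Applying a $\Lambda$-preserving transformation, we may take $l = \lin(\{e_d\})$, so $l \cap \Lambda$ is $s\Z e_d$ and $l$ contains $M+1$ consecutive points of $\Lambda$ inside $P$. Let $\pi$ be the projection onto the first $d-1$ coordinates and $P' = \pi(P)$. By Lemma~\ref{projection non-free}, $\intr(P') \cap \pi(\Lambda) \ne \emptyset$; pick $p \in \intr(P') \cap \pi(\Lambda)$. Now invoke the key construction (Lemma~\ref{properties of tight} applied to $a = o$ in dimension $d-1$, i.e.\ an element of $\cR^{d-1}(o)$ whose relative interior contains $p$ as its only point of $\pi(\Lambda)$): this yields a simplex $S \subseteq P'$ of some dimension $k \le d-1$ with vertices $o = p_0, p_1, \dots, p_k \in \Z^{d-1}$, having $o$ as a vertex and containing $p$ as the unique point of $\pi(\Lambda)$ in $\relintr(S)$, with the opposite facet carrying no extra integer points. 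By Theorem~\ref{hensley simplices with one point} (using the normalization that $\lambda^\ast(d,s)$ is nonincreasing in $d$, so $\lambda^\ast(k,s) \ge \lambda^\ast(d-1,s) \ge \lambda^\ast(d,s)$), the barycentric coordinate $\lambda_0$ of $p$ with respect to $S$ satisfies $\lambda_0 \ge \lambda^\ast(d,s) =: \lambda^\ast$.

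\medskip

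\textbf{The length contradiction.} The point $o = p_0$ lies in $P'$, so the fiber $\pi^{-1}(o) \cap P$ is a (nonempty, by construction) segment on $l$ of lattice length $M$ with respect to $\Lambda$, i.e.\ of Euclidean length at least $Ms$ (it contains $M+1$ points of $s\Z e_d$). Lift the simplex $S$ to a simplex $\tilde S \subseteq P$: over the vertex $p_0 = o$ take the full segment $T$ of length $\ge Ms$ in $\pi^{-1}(o) \cap P$, and over each other vertex $p_i$ take one point $\tilde p_i \in P$ with $\pi(\tilde p_i) = p_i$. Since $P$ is convex, $\conv(T \cup \{\tilde p_1, \dots, \tilde p_k\}) \subseteq P$. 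The fiber of this set over the interior point $p = \sum \lambda_i p_i$ is a segment parallel to $e_d$ whose length is at least $\lambda_0$ times the length of $T$, hence at least $\lambda_0 \cdot Ms \ge \lambda^\ast M s$. But $p \in \pi(\Lambda)$, so $\pi^{-1}(p) \cap P$ is a segment on a line through a point of $\Lambda$ parallel to $e_d$; since $P$ is $\Lambda$-free, its interior contains no point of $\Lambda$, which forces this segment to have Euclidean length at most $s$. Thus $\lambda^\ast M s \le s$, i.e.\ $M \le 1/\lambda^\ast$. Choosing $D(d,s) := \lfloor 1/\lambda^\ast(d,s) \rfloor$ gives the desired bound on the lattice diameter, and hence $N(d,s) := (D(d,s)+1)^d$ bounds $|P \cap \Lambda|$.

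\medskip

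\textbf{Main obstacle.} The delicate point is the construction and correct use of the simplex $S$ from Lemma~\ref{properties of tight}: one must verify that the relevant element of $\cR^{d-1}(o)$ really has $p$ (rather than some other lattice point) as the \emph{unique} interior point of $\pi(\Lambda)$ in $\relintr(S)$, so that Theorem~\ref{hensley simplices with one point} applies with exactly one interior point; and one must handle the lower-dimensional case $k < d-1$ cleanly, which is why the monotonicity of $\lambda^\ast(\cdot,s)$ is built in. The lifting step also requires a small argument that the fiber over $o$ is a genuine segment of full lattice length $M$, which follows because all $M+1$ points of $l \cap P \cap \Lambda$ project to $o \in P'$ and lie in $P$. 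Everything else — the passage from the lattice diameter to $|P \cap \Lambda|$, and the convexity bookkeeping in the lifting — is routine.
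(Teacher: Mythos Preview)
Your proposal is correct and follows essentially the same route as the paper: bound the lattice diameter via the simplex $S\in\cR^{d-1}(o)$, Theorem~\ref{hensley simplices with one point}, and a fiber-length contradiction, then convert this into a bound on $|P\cap\Lambda|$. The only cosmetic differences are that the paper packages the last step as a single pigeonhole (if $|P\cap\Lambda|>M^d$ then two points of $\Lambda$ agree mod $M$ after scaling, producing the long line directly), and phrases your lifting as concavity of the function $f(x)=$ length of $\pi^{-1}(x)\cap P$; also, in the paper $p$ is not chosen in advance but is the unique interior $\pi(\Lambda)$-point of whatever $Q\in\cR^{d-1}(o)$ lies below $P'$, which cleanly resolves the ordering issue you flagged in your ``Main obstacle'' paragraph.
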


\begin{proof}
Let $P \in \cPifm{s}{d}$ be a polytope.
For the purpose of deriving a contradiction assume that
$|P \cap \Lambda| \ge M^d + 1$, where
$M = \ceiling{\frac{1}{\lambda^\ast(d,s)} + 1}$ with
$\lambda^\ast(d,s)$ defined as in the formulation of
Theorem~\ref{hensley simplices with one point}.
Thus, there exist two distinct points $v, w \in
P \cap \Lambda$ such that $\frac{1}{s} v \equiv \frac{1}{s}
w \modulo{M}$.
Then we can choose pairwise distinct $z_0, \ldots, z_M$ in
$P \cap \Lambda \cap \aff(\{v,w\})$ such that
$\conv(\{z_0, \ldots, z_M\}) \cap \Lambda = \{z_0, \ldots,
z_M\}$.
Performing a $\Lambda$-preserving transformation we assume
that $z_j = j \cdot s e_d$ for $j = 0, \ldots, M$.
One has $\pi(z_j) = o$ for every $j = 0, \ldots, M$.
Since $M \ge 2$ (which follows from $\lambda^\ast(d,s) > 0$), 
$o$ is a boundary point of $P' := \pi(P)$,
otherwise $P$ would not be $\Lambda$-free.
By Lemma~\ref{projection non-free}, $\intr(P') \cap
\pi(\Lambda) \ne \emptyset$.

By construction, $P'$ is integral and belongs to
$\cX^{d-1}(o)$.
Thus, there exists a polytope $Q \in \cR^{d-1}(o)$ with
$\relintr(Q) \subseteq \intr(P')$.
By Lemma~\ref{properties of tight}, $Q$ is a simplex with
precisely one point of $\pi(\Lambda)$, say $p$, in the
relative interior.
Let $k$ be the dimension of $Q$ and let $p_0,\ldots,p_k$ be
the vertices of $Q$ with $p_0 = o$.
By Theorem~\ref{hensley simplices with one point}, if
$p = \sum_{j=0}^k \lambda_j p_j$ with
$\lambda_0,\ldots,\lambda_k >0$ and $\lambda_0 + \cdots +
\lambda_k = 1$, then one has $\lambda_j \ge
\lambda^\ast(d,s)$ for every $j=0,\ldots,k$, where
$\lambda^\ast(d,s)$ is a constant as in the formulation of
Theorem \ref{hensley simplices with one point}.
For a point $x \in P'$, let $f(x)$ denote the length of
the line segment $\pi^{-1}(x) \cap P$ (and thus, $f$
represents an ``X-Ray picture'' of $P$).
Employing the convexity of $P$ we see that $f(\cdot)$ is
concave on $P'$.
Consequently,
\begin{align*}
  f(p) = f\left(\sum_{j=0}^k \lambda_j p_j \right) \ge
  \sum_{j=0}^k \lambda_j f(p_j) \ge \lambda_0 f(p_0) \ge
  \lambda^\ast(d,s) s M > s.
\end{align*}
On the other hand, since $p \in \intt(P') \cap
\pi(\Lambda)$, we have $f(p) \le s$ as otherwise $P$ would
not be $\Lambda$-free.
Thus, this gives a contradiction to our assumption on $|P
\cap \Lambda|$.
It follows that $P$ contains at most $M^d$ points in
$\Lambda$ and we can choose $N(d,s) := M^d$.
\end{proof}

\begin{proof}[Proof of Theorem~\ref{finite-volume}]
Let $P \in \cPifm{s}{d}$ be a polytope.
In the following, we enlarge $P$ to a polytope $Q \in
\cPi^d$ such that $P \subseteq Q$ and $\emptyset \neq
\intt(Q) \cap \Lambda \subseteq P \cap \Lambda$.
By Lemma~\ref{G bound}, this implies $1 \le |\intt(Q) \cap
\Lambda| \le |P \cap \Lambda| \leq N(d,s)$, with $N(d,s)$
defined in the formulation of Lemma~\ref{G bound}.
Then, by Theorem~\ref{hensley-finiteness}, $\vol(P) \le
\vol(Q) \le V(d,s,N(d,s))$ with $V(d,s,N(d,s))$ defined
according to Theorem~\ref{hensley-finiteness}.
Consequently, $\vol(P) \le V(d,s) := V(d,s,N(d,s))$.

Let us now construct the polytope $Q$.
For that, we consider a sequence of polytopes $P^i$ which we
define iteratively.
Choose an arbitrary $p_1 \in \Lambda$ such that $p_1 \not
\in P$ and let $P^1 := \conv (P \cup \{p_1\})$.
For $i \ge 1$, we proceed as follows.
If $\intt(P^i) \cap \Lambda \subseteq P \cap \Lambda$, then
we stop and define $Q := P^i$.
Otherwise, we select $p_{i+1} \in (\intt(P^i) \cap \Lambda)
\setminus (P \cap \Lambda)$ and set $P^{i+1} := \conv(P \cup
\{p_{i+1}\})$.
Note that $P^{i+1} \varsubsetneq P^i$ for all $i \ge 1$ and
that the sequence is finite since $P$ is a polytope.
Eventually, we construct a polytope $Q \in \cPi^d$ such that
$P \subseteq Q$ and $\intt(Q) \cap \Lambda \subseteq P \cap
\Lambda$.
Furthermore, $\intt(Q) \cap \Lambda \neq \emptyset$ since
$P$ is properly contained in $Q$ and $P$ is maximal
$\Lambda$-free.
\end{proof}

\begin{proof}[Proof of Theorem \ref{finiteness}]
The theorem follows directly from
Proposition \ref{unbounded lat-free},
Theorem \ref{vol-bound<=>finiteness}, and
Theorem \ref{finite-volume}.
\end{proof}

\begin{remark} \label{study reduced} \thmcaption{The role of
    $\cR^d(a)$}
In our proofs we use the class $\cR^d(a)$.
The properties of elements of this class are stated in
Lemma~\ref{properties of tight}.
It seems that the class $\cR^d(a)$ deserves an independent
consideration.
\end{remark}

\begin{remark} \thmcaption{Growth of constants}
Let us analyze the growth of the constants in our
statements.

$V(d,s,k)$ must be (at least) double exponential in $d$.
It can be chosen to be $k (8ds)^d (8s+7)^{d 2^{2d+1}}$ (see
\cite{Pikhurko01}).

From the proof of Theorem \ref{finite-volume} and the above
bound on $V(d,s,k)$, it follows that for a polytope $P \in
\cPifm{s}{d}$ we have $\vol(P) \le V(d,s,N(d,s)) \le N(d,s)
\cdot (8ds)^d (8s+7)^{d 2^{2d+1}}$.
By the proof of Lemma \ref{G bound}, we can choose
$N(d,s) = \ceiling{\frac{1}{\lambda^\ast(d,s)}+1}^d$.

The best known lower bound on the constant
$\lambda^\ast(d,s)$ is $(7(s+1))^{-2^{d+1}}$ (see
\cite[Lemma\,2.1]{lagarias-ziegler-1991}).
Substituting this into the above formula yields
\begin{equation} \label{vol P upper}
\vol(P) \le \ceiling{(1 + \big(7(s+1)\big)^{2^{d+1}}}^d
(8ds)^d (8s+7)^{d 2^{2d+1}}.
\end{equation}
In the asymptotic notation the bound can be expressed as
$\vol(P) = (s+1)^{O(d 4^d)}$.

Below we give an example which shows that the maximum volume
over all polytopes of $\cPifm{s}{d}$ is at least of order
$(s+1)^{\Omega(2^d)}$.
We use the following sequence considered in
\cite[Lemma~2.1]{lagarias-ziegler-1991}.
(For the sake of simplicity the dependency on $s$ is not
indicated explicitly.)
Let us define $y_1 := s + 1$ and $y_j := 1 + s
\prod_{i=1}^{j-1} y_i$ for $j \ge 2$ (equivalently one can
use the recurrency $y_j = y_{j-1}^2 - y_j + 1$).
For every $d \ge 2$, we introduce the simplex $S_d := \conv
(\{ y_1 e_1, \ldots,y_{d-1} e_{d-1}, (y_d-1) e_d\})$.
The verification of the fact that $S_d$ belongs to
$\cPifm{s}{d}$ (and even to $\cPfmi{s}{d}$) is left to the
reader.
The volume of $S_d$ can be expressed by
\begin{equation*}
  \vol(S_d) = \frac{1}{d!} \left(\prod_{i=1}^{d-1}
    y_i\right) (y_d-1) = \frac{1}{d!} \frac{1}{s}
  (y_d-1)^2.
\end{equation*}
As was noticed in \cite[p.~1026]{lagarias-ziegler-1991} one
has $y_d \ge (s+1)^{2^{d-2}}$ for all $d \ge 2$. This shows
$\vol(S_d) = (s+1)^{\Omega(2^d)}$.

\end{remark}

Bound \eqref{vol P upper} does not help to determine all bounded elements of $\cPifm{s}{d}$ for fixed values of $d$ and $s$ since the right hand side is tremendously large (for example, more than $10^{500}$ for $d = 3$ and $s = 1$). This is the reason, why our proof of Theorem~\ref{main.thm} (presented in the following sections) does not rely on \eqref{vol P upper}.


\section{Tools and proof outline for the explicit description in\\ 
  dimension three} \label{notions}

In Sections \ref{parity},
\ref{slicing}, and \ref{four.facets} we use the following additional notation. The area of a set $K \subseteq \R^2$ is denoted $\area(K)$
(which is shorter than $\vol(K)$ which we used in the
previous sections). 
Since the only lattice in Sections \ref{parity},
\ref{slicing}, and \ref{four.facets} is the standard
lattice, we write $w(K)$ instead of
$w_\Lambda(K)$ to denote the
lattice width. In this
paper, a \emph{polygon} is a two-dimensional polytope.
If $P$ is a polygon with integer vertices we use $i(P)$ and
$b(P)$ to denote the number of integer points in the
interior and on the boundary of $P$, respectively.  The well-known \term{formula of Pick} states that $\area(P) = i(P)
+ \frac{b(P)}{2} - 1$. 

Let us explain the structure of the proof of Theorem
\ref{main.thm} and introduce the tools used in the proof.

The proof is essentially based on the following two ideas. 
We use the parity argument (a rather common tool in the geometry of numbers).
Two integer points $x,y \in \Z^d$ are said to have the same
\emph{parity} if each component of $x-y$ is even, i.e., $x
\equiv y \modulo{2}$.
It is easily seen that the point $\frac{1}{2}(x+y)$ is
integer if and only if $x$ and $y$ have the same parity.
We will apply this argument to integer points on the
boundary of $P \in \M$ by exploiting the fact that each
facet of $P$ contains an integer point in its relative
interior (which is a property of maximal lattice-free convex
sets, see Lov\'{a}sz \cite{Lovasz89}).
Clearly, there are at most $2^3 = 8$ points of different
parity in dimension three.
Proofs based on this idea are presented in
Section~\ref{parity}.

The second idea is the following.
We take an arbitrary facet $F$ of $P \in \M$ and assume
without loss of generality that $F \subseteq \R^2 \times
\{0\}$ and $P \subseteq \R^2 \times \R_{\ge 0}$.
Then, we consider the section $F' = P \cap (\R^2 \times
\{1\})$.
Taking into account that $F$ is an integral polygon and
contains at least one integer point in its relative interior
and that $F'$ is lattice-free in $\R^2 \times \{1\}$ with
respect to the lattice $\Z^2 \times \{1\}$. It follows that
either $P$ is \textquotedblleft not too
high\textquotedblright~with respect to $F$ or that $F$
contains a bounded number of integer points.
Proofs based on this idea are presented in
Sections~\ref{slicing} and \ref{four.facets}.

\newcommand{\cF}{\mathcal{F}}
Let $P, Q$ be polytopes and let $\cF(P)$ and $\cF(Q)$ be the sets of all faces of $P$ and $Q$, respectively. Then $P$ and $Q$ are said to be
\emph{combinatorially equivalent} (or of the same \term{combinatorial type}) if there exists a bijection $T : \cF(P) \rightarrow \cF(Q)$ satisfying $T(F_1) \subseteq T(F_2)$ for all $F_1, F_2 \in \cF(P)$ with $F_1 \subseteq F_2$. Our first lemma dealing with $\M$ shows that every element of $\M$ has at most six facets. This yields a quite short list of possible combinatorial types for elements in $\M$. Our arguments proceed by distinction of different possible combinatorial types. The description of $P \in \M$ with six facets resp.~five facets is given 
in Sections \ref{parity} resp.~\ref{slicing}. The description of $P \in \M$ with four facets (i.e., of simplices in $\M$) can be found in \cite{anwawe}. Since the arguments in \cite{anwawe} are very lengthy, we present an alternative shorter analysis in Section \ref{four.facets}.
The following classes of polytopes will be relevant. 

\begin{itemize} 
  \item A polytope $P \subseteq\R^3$ is said to be a
    \emph{pyramid} if $P = \conv (F \cup \{a\})$, where $F$
    is a polygon and $a \in \R^3 \setminus \aff(F)$.
    In this case $F$ is called the \emph{base} and $a$ the
    \emph{apex} of $P$.
  \item A polytope $P \subseteq \R^3$ is said to be a
    \emph{prism} if $P=F+I$, where $F$ is a polygon and $I$
    is a segment which is not parallel to $F$.
    In this case $F+v$ with $v \in \vertt(I)$ are called the
    \emph{bases} of $P$.
  \item A polytope $P \subseteq \R^3$ is said to be a
    \emph{parallelepiped} if $P=I_1+I_2+I_3$ where $I_1,
    I_2, I_3$ are segments whose directions form a basis of
    $\R^3$.
\end{itemize}

In the rest of this section we present results which we  use as tools. 
From \cite{AverkovWagner,Hurkens90} a relation between area
and lattice width is known.
In the following theorem, \eqref{lwidth.bound} is shown in
\cite{Hurkens90} and \eqref{area vs small width} and
\eqref{area vs big width} in \cite{AverkovWagner}.

\begin{theorem} \label{area,width}
Let $K \subseteq \R^2$ be a lattice-free closed convex set
with $w := w(K) > 1$.
Then
\begin{align}
  w   & \le 1+\frac{2}{\sqrt{3}}, & & \label{lwidth.bound} \\
  \area(K) & \le \frac{w^2}{2(w-1)} & &
             \mbox{ if } 1 < w \le 2, \label{area vs small width} \\
  \area(K) & \le 2, & & \mbox{ if } 2 < w \le
             1+\frac{2}{\sqrt{3}}. \label{area vs big width}
\end{align}
\end{theorem}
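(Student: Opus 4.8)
The three inequalities have rather different characters, so I would treat them separately, putting most of the self-contained work into \eqref{area vs small width}. Throughout, I first reduce to the case that $K$ is bounded: if $w(K)>1$ then $K$ cannot be unbounded, since an unbounded maximal lattice-free planar convex set is (up to a unimodular map) the strip $[n,n+1]\times\R$, which has lattice width $1$, so any unbounded lattice-free $K$ would have $w(K)\le 1$. Now fix a primitive $u\in\Z^2$ with $w(K,u)=w$ and apply a unimodular transformation taking $u$ to $e_2$; after a translation we may assume $K\subseteq\R\times[0,w]$ with $\min_{x\in K}x_2=0$ and $\max_{x\in K}x_2=w$. For $t\in[0,w]$ let $g(t)$ be the length of the segment $K\cap(\R\times\{t\})$; then $g$ is a nonnegative concave function on $[0,w]$ and $\area(K)=\int_0^w g(t)\,dt$. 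For every integer $j$ with $0<j<w$, the line $\R\times\{j\}$ crosses $\intr(K)$, so $K\cap(\R\times\{j\})$ is a chord whose relative interior lies in $\intr(K)$ and hence contains no point of $\Z^2$; since the lattice points of $\R\times\{j\}$ are spaced at distance $1$, this forces $g(j)\le 1$.

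To prove \eqref{area vs small width}, note that for $1<w\le 2$ the only integer in $(0,w)$ is $1$, so $g(1)\le 1$. Choose a supergradient $p$ of the concave function $g$ at $t=1$, i.e.\ $g(t)\le g(1)+p(t-1)\le 1+p(t-1)$ for all $t\in[0,w]$. Evaluating at $t=w$ and using $g(w)\ge 0$ gives $p\ge-\tfrac{1}{w-1}$. Since $\tfrac{w(w-2)}{2}\le 0$ for $w\le 2$, multiplying $p\ge-\tfrac1{w-1}$ by this nonpositive number and integrating yields
\begin{align*}
  \area(K)=\int_0^w g(t)\,dt\;\le\;\int_0^w\bigl(1+p(t-1)\bigr)\,dt
  \;=\;w+p\,\frac{w(w-2)}{2}\;\le\;w-\frac{1}{w-1}\cdot\frac{w(w-2)}{2}\;=\;\frac{w^2}{2(w-1)}\dotvar
\end{align*}
which is \eqref{area vs small width}; the extremal profile $g(t)=\tfrac{w-t}{w-1}$ corresponds to a suitable triangle and shows the estimate is tight.

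For \eqref{area vs big width} we have $2<w\le 1+\tfrac2{\sqrt3}$, so the slice function now satisfies both $g(1)\le 1$ and $g(2)\le 1$. The pure slicing estimate no longer suffices on its own, since it does not exclude "tall and narrow" sets; the missing ingredient is the \emph{minimality} of the lattice width, used in the form $w(K,e_1)\ge w>2$, i.e.\ the horizontal extent of $K$ is at least $w$. I would feed this together with the two slice bounds, the concavity of $g$, and nonnegativity at the endpoints into a one-parameter family of admissible piecewise-linear profiles and maximize $\int_0^w g$, obtaining $\area(K)\le 2$, with the boundary case $w=2$ realized by $\conv(\{o,2e_1,2e_2\})$. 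A cleaner route is to reduce first to $K$ maximal lattice-free: since $w(K)>1$ this is a triangle or a quadrilateral with a lattice point in the relative interior of each edge, and $\area(K)\le 2$ can be checked directly on that short list of combinatorial types. Inequality \eqref{lwidth.bound} (Hurkens' theorem) I would also handle via this reduction: after normalizing by unimodular maps, the triangles and quadrilaterals of each combinatorial type form a low-dimensional family, the lattice width is the minimum of finitely many linear width functionals over it, and one shows the supremum is $1+\tfrac2{\sqrt3}$, attained up to unimodular equivalence by a single triangle.

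The hard part is not \eqref{area vs small width}, which has the short proof above, but \eqref{lwidth.bound} and \eqref{area vs big width}: in both, one-dimensional slicing alone is not enough, so one must either reinject the minimality of the lattice width into the estimate or invoke the full classification of maximal lattice-free triangles and quadrilaterals and run through the combinatorial types. For \eqref{lwidth.bound} the delicate point is that the lattice-width functional is a minimum of linear functionals, hence not concave, so its maximum over the configuration space is not attained at an obvious extreme configuration and the optimization must be done with care.
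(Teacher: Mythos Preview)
The paper does not supply a proof of this theorem: it quotes \eqref{lwidth.bound} from Hurkens and \eqref{area vs small width}, \eqref{area vs big width} from Averkov--Wagner, and even remarks afterwards that \eqref{area vs big width} is not sharp. So there is no in-paper argument to compare against; the theorem is used as a black box.

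On the substance of your attempt: your proof of \eqref{area vs small width} via the concave slice function $g$ and a supergradient at $t=1$ is correct and complete. By contrast, for \eqref{lwidth.bound} and \eqref{area vs big width} you give only plans, not proofs. The first plan---feeding the minimality constraint $w(K,e_1)\ge w$ back into the slice-profile optimization---has a concrete obstacle you do not address: the horizontal extent $w(K,e_1)$ depends on the \emph{positions} of the horizontal slices, not merely on their lengths $g(t)$, so it is not a constraint on $g$ at all and the optimization you describe is not well-posed as stated. (Indeed, the slice bounds $g(1)\le 1$, $g(2)\le 1$ together with concavity and nonnegativity alone only yield $\area(K)\lesssim 3$ for $w$ near $1+2/\sqrt{3}$, so something genuinely two-dimensional is needed.) Your second plan---enlarge $K$ to a maximal lattice-free set and run through the combinatorial types of triangles and quadrilaterals---is the standard route and is essentially how the cited sources proceed, but you have not carried it out; the actual case analysis, in particular the optimization over type~3 triangles that produces the constant $1+2/\sqrt{3}$, is where the real work lies. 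So as written, your proposal proves \eqref{area vs small width} and honestly flags, but does not close, the gaps in \eqref{lwidth.bound} and \eqref{area vs big width}.
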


The bound \eqref{area vs big width} is not sharp but
sufficient for our purposes (for the sharp upper bound see
\cite[Theorem\,2.2]{AverkovWagner}).
For $h \in \Z$ the set $\Z^2 \times \{h\}$ in the affine
space $\R^2 \times \{h\}$ can be naturally identified with
the lattice $\Z^2$ in $\R^2$.
Such identification will be used several times.

For characterizing faces of maximal lattice-free polytopes we need results on the description of polygons with a given small number of interior integer points. In particular, 
we need the following result of Rabinowitz
\cite{Rabinowitz89}. 

\begin{theorem} \thmcaption{\cite{Rabinowitz89}} \label{i=1:enum}
Let $P \subseteq \R^2$ be an integral polygon with exactly
one interior integer point.
Then $P$ is equivalent to one of the polygons shown in
Figure~\ref{i=1}.
\begin{figure}[ht]
  \centering
  \subfigure[\label{tria1-1}]{\includegraphics[height=1.5cm]{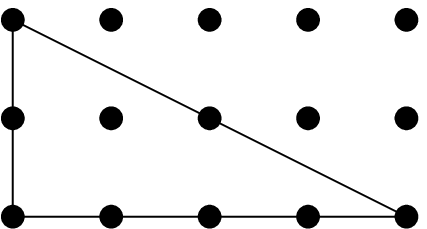}}
  \qquad
  \subfigure[\label{tria1-2}]{\includegraphics[height=1.5cm]{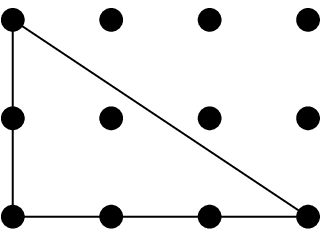}}
  \qquad
  \subfigure[\label{tria1-3}]{\includegraphics[height=1.5cm]{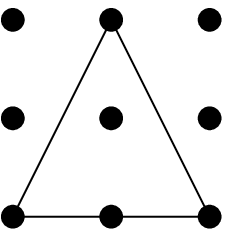}}
  \qquad
  \subfigure[\label{tria1-4}]{\includegraphics[height=2.25cm]{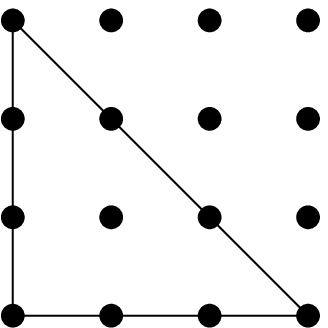}} \\
  \subfigure[\label{tria1-5}]{\includegraphics[height=1.5cm]{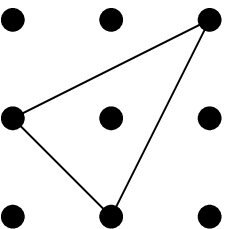}}
  \qquad
  \subfigure[\label{quad1-1}]{\includegraphics[height=1.5cm]{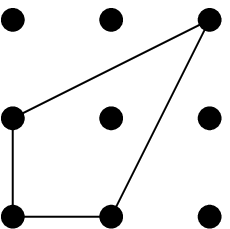}}
  \qquad
  \subfigure[\label{quad1-2}]{\includegraphics[height=1.5cm]{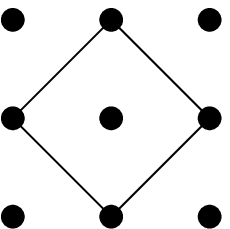}}
  \qquad
  \subfigure[\label{quad1-3}]{\includegraphics[height=1.5cm]{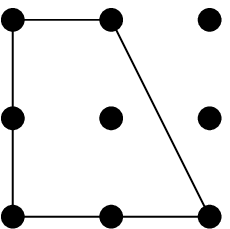}} \\
  \subfigure[\label{quad1-4}]{\includegraphics[height=1.5cm]{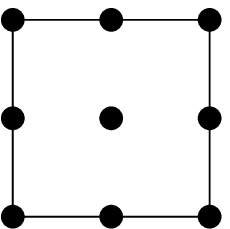}}
  \qquad
  \subfigure[\label{quad1-5}]{\includegraphics[height=1.5cm]{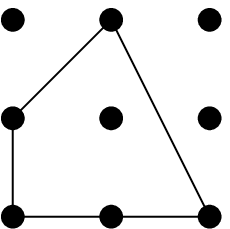}}
  \qquad
  \subfigure[\label{quad1-6}]{\includegraphics[height=1.5cm]{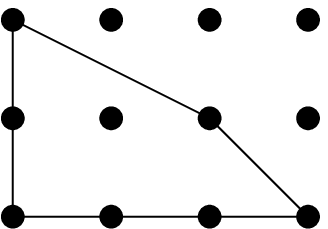}}
  \qquad
  \subfigure[\label{quad1-7}]{\includegraphics[height=1.5cm]{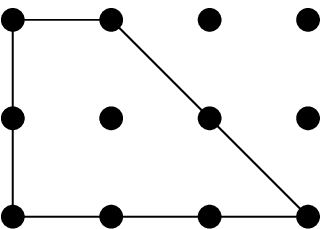}} \\
  \subfigure[\label{pent1-1}]{\includegraphics[height=1.5cm]{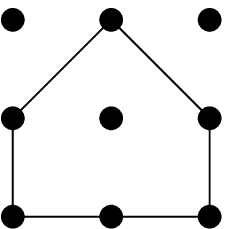}}
  \qquad
  \subfigure[\label{pent1-2}]{\includegraphics[height=1.5cm]{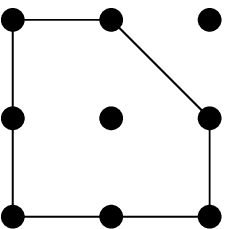}}
  \qquad
  \subfigure[\label{pent1-3}]{\includegraphics[height=1.5cm]{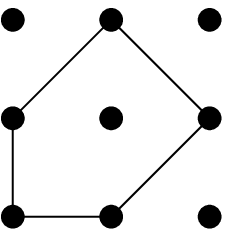}}
  \qquad
  \subfigure[\label{hex1-1}]{\includegraphics[height=1.5cm]{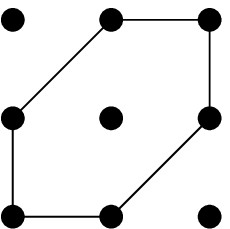}}
  \caption{All integral polygons with one interior integer point}
  \label{i=1}
\end{figure}
\end{theorem}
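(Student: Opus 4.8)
The plan is to first confine $P$ to a bounded region and then enumerate, splitting by the number of vertices. After a unimodular translation we may assume the unique interior integer point is $o$. By Pick's formula, $\area(P) = i(P) + b(P)/2 - 1 = b(P)/2$, so bounding the area is the same as bounding the number $b(P)$ of boundary integer points. I would obtain $b(P) \le 9$ from Scott's inequality $b(P) \le 2\,i(P) + 7$ (valid for integral polygons with $i(P)\ge 1$, and itself a short consequence of Pick's formula applied to a suitable decomposition, or of a one-line slicing argument in the direction realizing the lattice width). Hence $\area(P) \le 9/2$ and $|P\cap\Z^2|\le 10$; equality in both holds for $\conv\{o,3e_1,3e_2\}$. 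Finiteness of the number of unimodular classes is already guaranteed by Theorem~\ref{hensley-finiteness} with $d=2$, $s=1$, $k=1$; the value of the explicit bound $9/2$ is that it is small enough to make the subsequent enumeration genuinely finite and short.

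Next I would show $P$ has at most six vertices. The parity argument of Section~\ref{parity} is the natural tool: seven or more vertices distributed among the four parity classes of $\Z^2$ force two equal-parity vertices that are neither adjacent nor antipodal through $o$, and the midpoint of such a pair is a second interior integer point, a contradiction. The finitely many degenerate configurations (equal-parity vertices that are adjacent, forcing an edge through an interior integer point, or antipodal, forcing a centrally symmetric sub-configuration) are ruled out by hand; alternatively one may simply invoke the classical fact that a convex integral polygon with at most one interior integer point has at most six vertices. Thus the number of vertices lies in $\{3,4,5,6\}$, which is exactly the block structure (triangles, quadrilaterals, pentagons, hexagon) of Figure~\ref{i=1}.

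It remains to enumerate within each vertex count. In each case I would normalize by a unimodular map: place a vertex at $o$, an incident edge along $\Z e_1$, and bring the narrowest direction to $e_2$. Since the lattice width of $P$ is at most $3$ (classical, with $\conv\{o,3e_1,3e_2\}$ extremal), $P$ then lies in the strip $0\le x_2\le 3$, and $\area(P)\le 9/2$ together with $|P\cap\Z^2|\le 10$ bounds its extent along the strip. For triangles the vertex coordinates are pinned down directly by the lower bound on the barycentric coordinates of $o$ from Theorem~\ref{hensley simplices with one point}; for $v\in\{4,5,6\}$ one triangulates $P$ from $o$ (or from an edge whose relative interior meets $o$'s lattice line) and applies the same bound to each piece. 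This reduces each case to a finite list of candidate polygons, each of which is then tested for $i(P)=1$ and sorted into unimodular classes. Once $\area(P)\le 9/2$ is in hand, this last stage is precisely a bounded finite search of the type carried out in Section~\ref{sect:computer-search}.

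I expect the main obstacle to be this final enumeration: making it provably exhaustive and free of repetitions up to unimodular equivalence, especially for the seven quadrilaterals, where several near-isometric shapes (for instance unit-height trapezoids of differing top and bottom lengths) must be separated, and for the triangles, where the correct normal form for each of the five types has to be identified. Establishing the vertex bound $v(P)\le 6$ cleanly, without implicitly invoking the classification, also requires some care. Everything else — Pick's formula, Scott's inequality, the lattice-width and barycentric-coordinate bounds recorded above — is routine and does all the quantitative work.
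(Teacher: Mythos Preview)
The paper does not prove this theorem at all: it is stated with the attribution \emph{(\cite{Rabinowitz89})} and used as an external tool, with no proof given in the text. So there is no ``paper's own proof'' to compare against.

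Your outline is a reasonable sketch of how one would reprove Rabinowitz's classification, and the quantitative ingredients (Pick, Scott's inequality $b(P)\le 2i(P)+7$, hence $\area(P)\le 9/2$) are correct and standard. Two points to watch if you carry this out. First, the parity argument for $v(P)\le 6$ is more delicate than you indicate: with four parity classes in $\Z^2$, a seventh vertex only forces two same-parity vertices, and these may be adjacent (midpoint on the boundary) or antipodal through $o$ (midpoint equal to the unique interior point). Getting a clean bound of six without circularity requires a genuine case analysis or an independent argument; your fallback ``invoke the classical fact'' is really just citing the result you are trying to prove. Second, your claim that the lattice width is at most $3$ is not obviously implied by the tools you list; in fact one needs something like Scott's area bound together with a width--area inequality, or a direct argument, and this step deserves a line of justification. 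The final enumeration is, as you anticipate, the real work, and is exactly what Rabinowitz carried out.
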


The only result from the previous sections that is used for the description of $\M$ is Lemma~\ref{properties of tight} dealing with $\cR^d(a)$. We use the description of $\cR^2(a)$ presented in the following remark. 

\begin{remark} \label{R^2(0)}
With the help of Theorem \ref{i=1:enum}, the set $\cR^2(a)$
can be computed for a given $a \in \Z^2$.
Let us assume $a = o$, since, by a unimodular
transformation, the choice of $a$ is not important.
Then, up to a unimodular transformation, every element of
$\cR^2(o)$ coincides with one of the following sets:
\begin{eqnarray*}
  R_1 & := & \conv(\{o,2 e_1\}), \\
  R_2 & := & \conv(\{o, 3 e_1, 2 e_2\}), \\
  R_3 & := & \conv(\{o, 2 e_1, e_1 + 2 e_2\}), \\
  R_4 & := & \conv(\{o,2 e_1 + e_2, 2 e_2 + e_1\}).
\end{eqnarray*}
This can be seen as follows.
By Lemma~\ref{properties of tight} \ref{a-red is simplex} 
and \ref{a-red only one int}, all elements of $\cR^2(o)$ are
simplices with precisely one relative interior integer
point.
Thus, up to a unimodular transformation, all two-dimensional
elements of $\cR^2(o)$ appear in
Figures~\ref{tria1-1}--\ref{tria1-5}.
Using Lemma~\ref{properties of tight}
\ref{a-red a is vertex} and \ref{a-red opp is empty}, we
end up with $R_2$, $R_3$, and $R_4$.
The fact that $R_1$ is the only one-dimensional element of
$\cR^2(o)$ is straightforward to verify.
\end{remark}


\section{Elements in $\boldsymbol{\M}$ with six facets} \label{parity}

In this section we show that there exists, up to unimodular
transformation, only one $P \in \M$ with six facets.

\begin{lemma} \label{6.facet.lemma}
Let $P \in \M$.
Then, $P$ has at most six facets.
Furthermore, if $P$ has six facets, then each facet of $P$
is either the parallelogram shown in Figure~\ref{quad1-2} or
the triangle shown in Figure~\ref{tria1-3}.
\end{lemma}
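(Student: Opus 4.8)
The plan is to exploit two classical facts about a maximal lattice-free polytope $P\in\M$: every facet is an integral lattice-free polygon containing an integer point in its relative interior, and — by the parity argument — among the vertices of $P$, which are integral, at most $8$ parity classes occur. First I would bound $i(F)$ for each facet $F$. Each facet $F$ is a two-dimensional integral lattice-free set in its affine hull; since maximal lattice-free convex sets have an integer point in the relative interior of each facet, $i(F)\ge 1$. I claim in fact $i(F)=1$. Indeed, if some facet $F$ had $i(F)\ge 2$, I would look at the opposite "direction": put $F\subseteq\R^2\times\{0\}$, $P\subseteq\R^2\times\R_{\ge0}$, and use the slicing idea from Section~\ref{notions} together with Lemma~\ref{properties of tight} / Remark~\ref{R^2(0)} applied at an interior integer point of $F$ to force $P$ to be low over $F$; a short parity/maximality argument then shows the interior integer points of $F$ cannot all be "covered", contradicting lattice-freeness of $P$. (Alternatively one invokes that a facet of a maximal lattice-free polytope, having $1\le i(F)$, is governed by Theorem~\ref{i=1:enum} once $i(F)=1$ is established, so the crux really is ruling out $i(F)\ge2$.)

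Next, assume $P$ has $f\ge 6$ facets; I want to show $f=6$ and pin down the facet shapes. Count incidences: each facet is a polygon with at least $3$ vertices, so $\sum_{F}(\#\text{vertices of }F)\ge 3f$. On the other hand each vertex of $P$ lies on at least $3$ facets, and — here is where parity enters — the vertex set of $P$ has at most $8$ elements if we can show no two vertices share a parity class, or more carefully, we bound $\#\vx(P)$ via Euler's formula $\#\vx(P)-\#\text{edges}+f=2$ for a $3$-polytope together with $2\cdot\#\text{edges}\ge 3f$ and $2\cdot\#\text{edges}\ge 3\#\vx(P)$. From $f\ge6$ and these relations one gets $\#\vx(P)\ge 5$ and the combinatorial possibilities narrow sharply; the parity constraint (at most $8$ parity classes, and each facet needing its own relative-interior integer point forcing additional parity occupancy) is what caps $f$ at exactly $6$ and forces $P$ to be combinatorially the octahedron (two triangular facets would be incompatible) or — after checking — exactly the combinatorial type in which all six facets are triangles or all are quadrilaterals. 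A direct check using $i(F)=1$ and Theorem~\ref{i=1:enum}: the only quadrilateral with one interior point that can tile around a vertex with the right parities is Figure~\ref{quad1-2}, and the only triangle is Figure~\ref{tria1-3}.

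Then I would finish by the case analysis on the combinatorial type of the $6$-facet polytope. With $\#\vx(P)$ and $f=6$ fixed, Euler gives $\#\text{edges}=\#\vx(P)+4$; combined with $2\cdot\#\text{edges}=\sum_F(\#\text{vertices of }F)$ and $i(F)=1$ for all $F$, the admissible face vectors are very few. For each, I place one facet $F$ in $\R^2\times\{0\}$ using its normal form from Theorem~\ref{i=1:enum}, let $h$ be the height of the top structure, and use the slicing section $F'=P\cap(\R^2\times\{1\})$ being lattice-free in $\Z^2\times\{1\}$: this forces $h$ small (using Theorem~\ref{area,width} to bound how the area shrinks from $F$ to $F'$), after which the integrality of the remaining vertices plus the relative-interior-integer-point condition on the other five facets plus the parity argument leave at most finitely many configurations, all unimodularly equivalent to a single $P$ whose facets are the parallelogram of Figure~\ref{quad1-2} and the triangle of Figure~\ref{tria1-3}.

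The main obstacle I expect is the step establishing $i(F)=1$ for every facet (equivalently, ruling out a facet with two or more interior integer points while $P$ stays bounded and maximal): this is where one must combine the slicing argument, the structure of $\cR^2(o)$ from Remark~\ref{R^2(0)}, and a careful parity bookkeeping, and it is easy to miss a configuration. Once that is in hand, the facet-classification via Theorem~\ref{i=1:enum} and the Euler/parity counting to cap the number of facets at six are comparatively routine, though the enumeration of combinatorial types still requires care.
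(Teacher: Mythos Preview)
Your proposal has a genuine logical error and misses the paper's key mechanism.

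\textbf{The claim $i(F)=1$ for every facet is false in general.} Several simplices in $\M$ (for instance $M_1=\conv(\{o,2e_1,3e_2,6e_3\})$) have facets with more than one relative interior integer point; indeed, Lemma~\ref{2ineq} explicitly allows $i(F)=2$ for simplex facets. So you cannot establish $i(F)=1$ \emph{before} knowing $P$ has six facets; the implication runs the other way. This breaks the structure of your argument, because your bound on the number of facets relies on $i(F)=1$ via Theorem~\ref{i=1:enum}.

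\textbf{Your facet bound is not actually proved.} You oscillate between two ideas: (a) ``no two vertices share a parity class'' so $\#\vx(P)\le 8$, and (b) Euler relations. Neither, as written, yields $f\le 6$. For (a), two vertices of $P$ \emph{can} share a parity class (their midpoint is integer but may lie on the boundary), so the premise is unproven. For (b), the inequalities $2E\ge 3f$ and $2E\ge 3V$ with $V-E+f=2$ give no upper bound on $f$ absent an upper bound on $V$. The paper's argument is different and much more direct: pick two adjacent integer points $p_1,p_2$ on an edge, and for each facet $F$ choose $p_F\in\relintt(F)\cap\integer^3$ so that the relevant connecting segments contain no interior integer points. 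Then any two points in $\{p_1,p_2\}\cup\{p_F:F\in\F\}$ have distinct parities, because their midpoint would otherwise be an integer point either in $\intr(P)$ (if the two points lie in relative interiors of distinct facets) or in the relative interior of a segment chosen to avoid this. Hence $|\F|\le 8-2=6$.

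\textbf{The second part is both misdirected and overextended.} Once $f=6$ is known, the paper does \emph{not} slice or invoke Theorem~\ref{area,width}; instead it refines the parity count: assuming some facet has $i(F)\ge 2$, one can assemble \emph{nine} integer points of pairwise distinct parity (a vertex, a neighbor on an edge, two interior points in the rich facet, and one per remaining facet), a contradiction. Hence $i(F)=1$ for all six facets, so each facet appears in Figure~\ref{i=1}. Finally, any polygon in that list other than \ref{quad1-2} and \ref{tria1-3} already contains four integer points of distinct parity, which together with the five interior points of the other facets again gives nine, a contradiction. Your slicing/height/combinatorial-type analysis is unnecessary here and in fact belongs to the \emph{next} lemma (where the combinatorial type of the six-facet $P$ is pinned down).
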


\begin{proof}
We first show that $P$ has at most six facets.
Let $\F$ be the set of all facets of $P$.
We choose two integer points $p_1,p_2$ on an edge of $P$
with $[p_1,p_2] \cap \Z^3 = \{p_1,p_2\}$.
For each $F \in \F$ we fix an integer point $p_F$ in the
relative interior of $F$ in the following way.
If $F \in \F$ and $p_1,p_2 \in F$ let $p_F$ be a point in
$\relintt(F) \cap \Z^3$ such that the triangle with vertices
$p_1,p_2,p_F$ has minimal area.
This ensures $[p_F,p_i] \cap \Z^3 = \{p_F,p_i\}$ for $i =
1,2$.
If $F \in \F$ and $F \cap \{p_1,p_2\} = \{p_i\}$ for some
$i=1,2$, let $p_F$ be a point in $\relintt(F) \cap \Z^3$
with $[p_F,p_i] \cap \Z^3 = \{p_F,p_i\}$.
If $F \in \F$ and $F \cap \{p_1,p_2\} = \emptyset$, we
choose $p_F$ to be any point in $\relintt(F) \cap \Z^3$.
Let $X := \{p_1,p_2\} \cup \{p_F: F \in \F\}$.
By construction, all points in $X$ have different parity.
Hence, $|\F| = |X| - 2 \leq |\integer^d / 2 \integer^d| - 2 = 2^3 - 2 =6$. 

Let us now show the second part of the assertion.
For that, we first show that each facet of $P$ contains
exactly one integer point in its relative interior.
Assume there exists a facet $F_1$ containing at least two
integer points in its relative interior.
Choose a vertex $v_1$ of $F_1$ and two integer points
$p_1,p_2 \in \relintt(F_1) \cap \Z^3$ such that the triangle
with vertices $v_1,p_1,p_2$ has minimal area.
Let $e = [v_1,v_2]$ be an edge of $P$ which is not contained
in $F_1$ and let $\bar{v}_2$ be the integer point on the
edge $e$ which is closest to $v_1$.
Let $F_2$ and $F_3$ be the two facets containing both $v_1$
and $\bar{v}_2$.
Let $p_3$ (resp.~$p_4$) be an integer point in the relative
interior of $F_2$ (resp.~$F_3$) such that $[v_1,p_i] \cap
\Z^3 = \{v_1,p_i\}$ and $[\bar{v}_2,p_i] \cap \Z^3 =
\{\bar{v}_2,p_i\}$ for $i = 3,4$ (this can again be achieved
by choosing triangles with minimal area).
In the remaining three facets choose arbitrary relative
interior integer points $p_5, p_6, p_7$ such that
$[\bar{v}_2,p_i] \cap \Z^3 = \{\bar{v}_2,p_i\}$ for $i =
5,6,7$.
By construction, the points $v_1, \bar{v}_2, p_1, \dots,
p_7$ must have different parity which is a contradiction.

Let now $F$ be an arbitrary facet of $P$.
It follows that $F$ is one of the polygons shown in
Figure~\ref{i=1}.
If $F$ is different from the quadrilateral \ref{quad1-2} and
the triangle \ref{tria1-3}, then it contains four integer
points with different parity.
These four integer points together with the five interior
integer points of the other five facets of $P$ are nine
points of different parity which is a contradiction.
\end{proof}

The next lemma shows that all facets of a polytope $P \in
\M$ with six facets are quadrilaterals as pictured in
Figure~\ref{quad1-2} and thus, the shape of $P$ is uniquely
determined.

\begin{lemma}
Each $P \in \M$ with six facets is a parallelepiped where
each of the six facets is a parallelogram as depicted in
Figure~\ref{quad1-2}.
\end{lemma}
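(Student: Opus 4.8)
The plan is to argue in two stages. First I would show that $P$ has no triangular facet; by Lemma~\ref{6.facet.lemma} this forces every one of the six facets to be a copy of the parallelogram in Figure~\ref{quad1-2}. Then I would deduce that $P$ is a parallelepiped, and — since a parallelepiped all of whose facets are copies of a fixed integral polygon is determined up to $\Aff(\integer^3)$ — conclude that $P$ is the parallelepiped $M_{12}$.

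\textbf{Stage 1: ruling out triangular facets.} Suppose $F_0$ is a triangular facet. By Lemma~\ref{6.facet.lemma} it is a copy of the triangle in Figure~\ref{tria1-3}, i.e.\ of $\conv(\{o,2e_1,e_1+2e_2\})$; reading off this triangle, it has an edge $e_0$ carrying an integer point $m$ in its relative interior, with $e_0\cap\integer^3=\{v,m,v'\}$ and $m=\tfrac12(v+v')$, whereas every edge of the parallelogram in Figure~\ref{quad1-2} is primitive. Hence the second facet $F_1$ of $P$ incident to $e_0$ cannot be a parallelogram, so $F_1$ is again a copy of the triangle in Figure~\ref{tria1-3}, with $e_0$ as its long edge. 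After a unimodular transformation I may assume $e_0=[o,2e_1]$, $m=e_1$, $F_0=\conv(\{o,2e_1,e_1+2e_2\})\subseteq\R^2\times\{0\}$ and $P\subseteq\R^2\times\R_{\ge0}$. The apex $w$ of $F_1$ is then an integer point with $w_3>0$, and since the unique interior integer point of $F_1$ equals $\tfrac14 o+\tfrac14(2e_1)+\tfrac12 w=\tfrac12(w+e_1)$, reduction modulo $2\integer^3$ forces $w\equiv e_1$, hence $w_3\ge2$. Now I would use that $P$ has six, not four, facets: $P$ strictly contains the tetrahedron $\conv(F_0\cup F_1)$ and therefore possesses a further vertex, and combining this with the lattice-freeness of $P$ and with the parity bookkeeping of Lemma~\ref{6.facet.lemma} (applied to $v,v',m,w$, the apex of $F_0$, and the interior integer points of the facets adjacent to $F_0$ and $F_1$) I expect to locate an integer point in $\intr(P)$ — a contradiction. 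Turning this last step into a short, clean case distinction is the main obstacle of the proof.

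\textbf{Stage 2: $P$ is a parallelepiped.} Once every facet is a parallelogram, the relations $V-E+6=2$ and $2E=\sum_F|F|=24$ give $V=8$, $E=12$, and since $\sum_v\deg(v)=2E=24$ with every vertex of degree at least $3$, every vertex has degree exactly $3$; so $P$ has the combinatorial type of the $3$-cube. Fixing a vertex $v$ with neighbours $v+d_1,v+d_2,v+d_3$, the facet through $v,v+d_i,v+d_j$ is a parallelogram, so its fourth vertex is $v+d_i+d_j$; and the facet through $v+d_1$ and its two neighbours other than $v$ is a parallelogram whose fourth vertex then has to be $v+d_1+d_2+d_3$. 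Therefore $P=v+\{\sigma_1d_1+\sigma_2d_2+\sigma_3d_3:\sigma_1,\sigma_2,\sigma_3\in\{0,1\}\}$ is a parallelepiped. Finally, each facet is a copy of the parallelogram in Figure~\ref{quad1-2}; since that polygon has a unique realization up to $\Aff(\integer^2)$, the parallelepiped $P$ is determined up to $\Aff(\integer^3)$, and a direct check shows $P=M_{12}$.
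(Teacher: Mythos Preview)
Your Stage~2 is essentially correct and parallels the paper's argument (the paper quotes the enumeration of combinatorial types of $3$-polytopes with six facets from \cite{Gruenbaum03} rather than computing via Euler's formula, but your route is fine; note also that the lemma only asks for ``parallelepiped with facets as in Figure~\ref{quad1-2}'', so the final uniqueness claim is extra).

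Stage~1, however, is incomplete by your own admission: you write that producing the contradiction ``is the main obstacle of the proof'' and leave it at ``I expect to locate an integer point in $\intr(P)$''. This is the heart of the lemma. To make your local approach work you would have to name nine specific boundary integer points --- drawn from vertices, the edge midpoint $m$, and relative-interior points of the six facets --- chosen so that any two lying in a common facet already have distinct parity; pigeonhole then forces two of equal parity on different facets, whose midpoint is interior. With only $F_0$, $F_1$, and an unspecified ``further vertex'' in hand you do not yet have enough combinatorial control to carry this out, and there is no reason to expect the case distinction to be short.

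The paper sidesteps this local analysis. It uses your observation that triangular facets must pair up along their long edge (since the parallelogram of Figure~\ref{quad1-2} has only primitive edges), and then invokes the known list of combinatorial types of $3$-polytopes with six facets. Only three types are compatible with ``every facet is a triangle or a quadrilateral, and the triangles occur in edge-sharing pairs'': the cube (type~A, six quadrilaterals), a type~B with six triangles, and a type~C with four triangles and two quadrilaterals. For types~B and~C the global combinatorics pins down exactly which edges carry an integer midpoint, and in each case one reads off nine boundary integer points that must be pairwise inequivalent modulo~$2$ --- a contradiction. This global enumeration of combinatorial types is what replaces the delicate local case distinction you were anticipating.
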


\begin{proof}
By Lemma \ref{6.facet.lemma}, $P$ has only two types of
facets.
Since quadrangular facets do not contain edges with relative
interior integer points, it follows that $P$ has an even
number of triangular facets and that these facets are
pairwise attached.
In \cite[Sections 6{.}2 and 6{.}3]{Gruenbaum03} all possible combinatorial types of three-dimensional polytopes with six facets are enumerated (there are exactly seven such types).
Since all facets of $P$ are quadrilaterals shown in
Figure~\ref{quad1-2} or triangles shown in
Figure~\ref{tria1-3}, and since triangular facets occur
pairwise, we deduce that $P$ is one of the three combinatorial
types in Figure~\ref{comb.types}. 

\begin{figure}[ht]
  \centering
  \mbox{
    \subfigure[Type A]{\includegraphics[height=2cm]{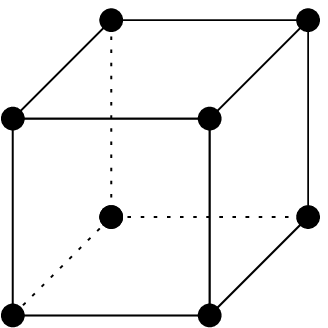}}
    \qquad
    \subfigure[Type B]{\includegraphics[height=2.6cm]{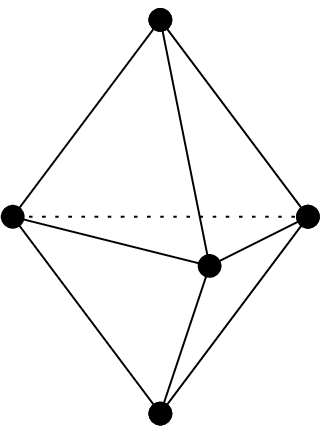}}
    \qquad
    \subfigure[Type C]{\includegraphics[height=2.6cm]{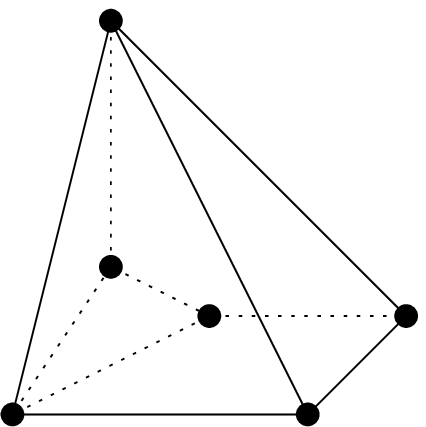}}
  }
  \caption{Possible combinatorial types of $P$}
  \label{comb.types}
\end{figure}

First assume that $P$ is of combinatorial type B, having
only triangular facets.
Since all facets contain exactly one edge with exactly one
relative interior integer point, only two different
polytopes $P$ are possible as depicted in
Figure~\ref{tetra}, where the gray nodes represent integer
points on edges.
In both cases, the three integer points represented in gray
together with the six relative interior integer points of
the six facets of $P$ are nine points of different parity
which is a contradiction.
Thus, $P$ cannot be of combinatorial type B.

\begin{figure}[ht]
  \centering
  \mbox{
    \subfigure[\label{tetra} Polytope of type B]{\includegraphics[height=2.6cm]{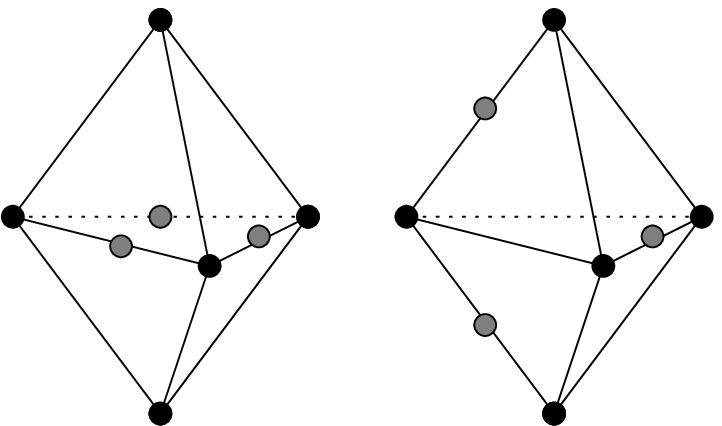}}
    \qquad
    \subfigure[\label{pyra} Polytope of type C]{\includegraphics[height=2.6cm]{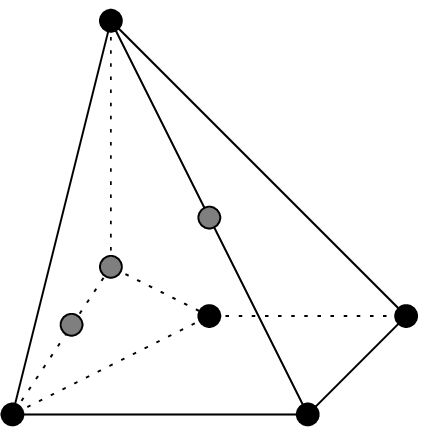}}
  }
  \caption{Polytopes $P$ of combinatorial types B and C}
  \label{impossible}
\end{figure}

Now assume that $P$ is of combinatorial type C, having two
quadrangular and four triangular facets.
Then, the location of the two relative interior integer
points on its edges is already determined by the structure
of the facets of $P$ as illustrated in Figure~\ref{pyra}.
These two points together with a particular vertex of $P$
(the gray nodes in Figure~\ref{pyra}) and the six relative
interior integer points of the six facets of $P$ are nine
points of different parity.
Thus, $P$ cannot be of combinatorial type C.

It follows that $P$ must be of combinatorial type A.
This implies that all facets of $P$ are quadrangular and
therefore $P$ has the shape depicted in Figure~\ref{M12}.
\end{proof}


\section{Elements in $\boldsymbol{\M}$ with five facets} \label{slicing}

By \cite[Section 6{.}1]{Gruenbaum03}, there are exactly two
combinatorial types of three-dimensional polytopes with five
facets. These are  quadrangular pyramids (i.e., pyramids having a
quadrangular base) and 
triangular prisms (i.e., prisms having triangular
bases). We will analyze both combinatorial types separately.


\subsection{Quadrangular pyramids} \label{quad-pyramids}

Let $P \in \M$ be a quadrangular pyramid.
Using a unimodular transformation, base $F$ and apex
$a = (a_1, a_2, a_3)$ of $P$ can be assumed to satisfy $F
\subseteq \R^2 \times \{0\}$ and $a_3 > 0$.
We can further assume that $a_3 \geq 2$ since for $a_3 = 1$,
$P$ is contained in $\real^2 \times [0,1]$ which is a contradiction to its maximality. 

We first show that there is only one quadrangular pyramid
$P \in \M$ with $a_3 = 2$ and $a_3 = 3$, respectively, up to
a unimodular transformation.

\begin{lemma}
Let $P \in \M$ be a quadrangular pyramid with base
$F \subseteq \R^2 \times \{0\}$ and apex $a = (a_1, a_2,
a_3)$, where $a_3 = 2$.
Then $P$ is equivalent to the pyramid $M_8$.
\end{lemma}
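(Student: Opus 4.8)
The plan is the following. Since $P$ is integral, its apex $a=(a_1,a_2,2)$ is an integer point, so $a_1,a_2\in\Z$; applying the $\Z^3$-preserving shear $(x,y,z)\mapsto(x-\alpha z,\,y-\beta z,\,z)$ with $\alpha,\beta\in\Z$ I may assume $a_1,a_2\in\{0,1\}$ (their exact values will in fact be irrelevant). The base $F$ is an integral quadrilateral in $\R^2\times\{0\}$ and, being a facet of the maximal lattice-free polytope $P$, contains an integer point in its relative interior. The object to study is the slice $F':=P\cap(\R^2\times\{1\})$, which, identifying $\R^2\times\{1\}$ with $\R^2$ via $\pi$, is a convex quadrilateral with $\pi(F')=\tfrac12\bigl(\pi(F)+(a_1,a_2)\bigr)$; in particular the vertices of $F'$ lie in $\tfrac12\Z^2$ and $2F'$ is an integral translate of $F$. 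Because $P\subseteq\R^2\times[0,2]$ is a pyramid, a point of $\Z^3$ lies in $\intr(P)$ only if its third coordinate is $1$ and its projection lies in $\intr(F')$; hence $P$ is lattice-free if and only if $F'$ is lattice-free. Moreover each of the four triangular lateral facets of $P$ carries an integer point in its relative interior, and such a point necessarily has third coordinate $1$ (at height $0$ it would lie on an edge of $F$, at height $2$ it would be the apex); projecting, each edge of $F'$ contains an integer point in its relative interior. Finally, if $F'$ were not inclusion-maximal among lattice-free convex subsets of $\R^2$, say $\widetilde F'\supsetneq F'$ is lattice-free, then $\conv\bigl((\widetilde F'\times\{1\})\cup\{a\}\cup(F\times\{0\})\bigr)$ would be a lattice-free convex set with nonempty interior strictly containing $P$, contradicting $P\in\cCfm{1}{3}$; thus $F'$ is a maximal lattice-free quadrilateral.

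I would then pin down $F'$. Pick an integer point $p^{(j)}$ in the relative interior of the $j$-th edge of $F'$ ($j=1,\dots,4$, in cyclic order). These four points are distinct and in convex position, and $Q:=\conv\{p^{(1)},\dots,p^{(4)}\}$ is an integral quadrilateral contained in the lattice-free set $F'$, hence $i(Q)=0$; and for each $j$ the chord $[p^{(j)},p^{(j+1)}]$ joins relative interior points of two distinct edges of $F'$, so its relative interior lies in $\intr(F')$ and carries no integer point, whence $b(Q)=4$. By the formula of Pick, $\area(Q)=1$. An integral quadrilateral of area $1$ has only its vertices as lattice points and, being split by a diagonal into two unimodular triangles, is a parallelogram equivalent to the unit square; after a unimodular transformation I may therefore assume $p^{(1)}=(0,0)$, $p^{(2)}=(1,0)$, $p^{(3)}=(1,1)$, $p^{(4)}=(0,1)$, so that $[0,1]^2=Q\subseteq F'$ while the vertices $w_1,\dots,w_4$ of $F'$, with $p^{(j)}\in\relintr([w_j,w_{j+1}])$, still lie in $\tfrac12\Z^2$. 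Since $F'\supseteq[0,1]^2$ is convex, each edge line of $F'$ supports $[0,1]^2$ at the corresponding corner, which forces the edge through $(0,0)$ to have negative slope, so $w_1$ lies to the upper left of the origin, and analogously for $w_2,w_3,w_4$. Combining these position constraints with lattice-freeness of the four corner triangles $F'\cap\{x\le0\}$, $F'\cap\{y\le0\}$, $F'\cap\{x\ge1\}$, $F'\cap\{y\ge1\}$, with the collinearity relations $p^{(j)}\in\relintr([w_j,w_{j+1}])$, and with $w_1,\dots,w_4\in\tfrac12\Z^2$, a short case analysis leaves as the only possibility $w_1=(-\tfrac12,\tfrac12)$, $w_2=(\tfrac12,-\tfrac12)$, $w_3=(\tfrac32,\tfrac12)$, $w_4=(\tfrac12,\tfrac32)$, i.e. $F'=\{(x,y):|x-\tfrac12|+|y-\tfrac12|\le1\}$, which is readily checked to be lattice-free.

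Consequently $2F'=\{(x,y):|x-1|+|y-1|\le2\}=B+(1,1)$, so $F=2F'-\pi(a)=B+\bigl((1,1)-\pi(a)\bigr)$ is an integral translate of $B$; after translating by the integral vector $\pi(a)-(1,1)$ so that $F=B$, the apex becomes $(2\pi(a)-(1,1),2)$, whose first two coordinates are odd, and composing with the $B$-fixing shear $(x,y,z)\mapsto(x-\gamma z,\,y-\delta z,\,z)$ for suitable $\gamma,\delta\in\Z$ carries the apex to $(1,1,2)$. Hence $P=\conv(F\cup\{a\})$ is equivalent to $\conv(B\cup\{(1,1,2)\})=M_8$. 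It remains only to record that $M_8\in\M$: that $\intr(M_8)\cap\Z^3=\emptyset$ is a direct verification, and inclusion-maximality among lattice-free convex sets follows because every facet of $M_8$ contains an integer point in its relative interior.

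The step I expect to be the main obstacle is the determination of $F'$ in the second paragraph, i.e. showing that among lattice-free quadrilaterals with vertices in $\tfrac12\Z^2$ and an integer point in the relative interior of every edge, only the translated $\ell_1$-ball survives. The argument is elementary but requires care; its two engines are Pick's formula, which forces the inscribed lattice quadrilateral of edge points to be unimodular, and the half-integrality of the vertices of $F'$, which — once lattice-freeness of the four corner ears is imposed — leaves no freedom for the slopes of the edges of $F'$, all of which must be $\pm1$.
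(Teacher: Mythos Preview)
Your proof is correct and follows essentially the same approach as the paper: both analyze the slice $F'=P\cap(\R^2\times\{1\})$, exploit the half-integrality $\vx(F')\subseteq\tfrac12\Z^2$ coming from $a_3=2$, and pin $F'$ down as the translated $\ell_1$-ball centred at $(\tfrac12,\tfrac12)$. The only difference is in how $F'$ is determined: the paper observes directly that convexity forces $\vx(F')$ onto the two half-integer lines $\{x=\tfrac12\}\cup\{y=\tfrac12\}$ and then uses a containment argument ($F'\supseteq B$, with equality forced by lattice-freeness), whereas you go via Pick's formula on the inscribed lattice quadrilateral $Q$ to normalise $Q=[0,1]^2$ and then let the collinearity constraints $p^{(j)}\in[w_j,w_{j+1}]$ together with $w_j\in\tfrac12\Z^2$ do the work; this is a minor variation, not a genuinely different route, and in fact your collinearity computation alone already forces $w_j$ without needing the corner lattice-freeness you mention.
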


\begin{proof}
Let $F':= P \cap (\R^2 \times \{1\})$.
Since each triangular facet of $P$ contains an integer point
in its relative interior, it follows that $F'$ is a maximal
lattice-free quadrilateral and contains precisely four
integer points, one in the relative interior of each of the
edges of $F'$.
Without loss of generality assume $F'\cap \Z^3 = \{0,1\}^2
\times \{1\}$.
By convexity, $\vertt(F')$ lies in the union of $(0,1)
\times \R \times \{1\}$ and $\R \times (0,1) \times \{1\}$.
On the other hand $\vertt(F') = \frac{1}{2} a + \frac{1}{2}
\vertt(F) \subseteq \frac{1}{2} \Z^3$.
Hence $\vertt(F')$ lies in the union of $\{\frac{1}{2}\}
\times \frac{1}{2} \Z \times \{1\}$ and $\frac{1}{2} \Z
\times \{\frac{1}{2}\} \times \{1\}$.
Clearly, $\vertt(F')$ is disjoint with $[0,1]^2 \times
\{1\}$.
It follows that $F'$ contains the set $B := \frac{1}{2} e_1
+ \frac{1}{2} e_2 + e_3 + \conv(\{\pm e_1, \pm e_2\})$.
If $B$ were a proper subset of $F'$, then one of the points
from the set $\{0,1\}^2 \times \{1\}$ would be in the
relative interior of $F',$ a contradiction.
Hence $F'=B$.
We have determined that, up to a unimodular transformation,
$F$ is a translate of $\conv(\{\pm 2 e_1, \pm 2 e_2\})$ and
$F'$ is a translate of $B$ by an integer vector.
This implies the assertion.
\end{proof}

\begin{lemma}
Let $P \in \M$ be a quadrangular pyramid with base $F
\subseteq \R^2 \times \{0\}$ and apex $a = (a_1, a_2, a_3)$,
where $a_3 = 3$.
Then $P$ is equivalent to the pyramid $M_9$.
\end{lemma}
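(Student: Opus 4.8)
The strategy parallels the $a_3=2$ case but requires more care since the section at height $1$ and the section at height $2$ both enter. Write $F \subseteq \R^2 \times \{0\}$ for the base and $a = (a_1,a_2,3)$ for the apex, and set $F_1 := P \cap (\R^2 \times \{1\})$ and $F_2 := P \cap (\R^2 \times \{2\})$. First I would observe that, as in the previous lemma, each of the four triangular facets of $P$ carries a relative-interior integer point; combined with lattice-freeness and maximality, this forces strong constraints on $F_1$ (which must be lattice-free in its plane, with lattice $\Z^2 \times \{1\}$) and, crucially, on $F$ itself. Since $F$ is an integral polygon which is lattice-free together with $P$ — more precisely, since the facet of $P$ opposite $a$ is $F$ and contains an integer point in its relative interior (Lov\'asz), $F$ has exactly one interior integer point — by Theorem~\ref{i=1:enum} the polygon $F$ is one of the quadrilaterals in Figure~\ref{i=1}.

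**Key steps.** (1) Normalize: place the unique interior integer point of $F$ at the origin, so $F \in \{$one of the quadrilaterals \ref{quad1-1}--\ref{quad1-7}$\}$, and record that the apex satisfies $a \in \Z^3$ with $a_3 = 3$. (2) Use the affine combination $F_2 = \tfrac{2}{3}a + \tfrac{1}{3}F$ and $F_1 = \tfrac{1}{3}a + \tfrac{2}{3}F$: vertices of $F_1$ lie in $\tfrac13\Z^2 \times \{1\}$ and vertices of $F_2$ lie in $\tfrac13\Z^2 \times \{2\}$, while both $F_1$ and $F_2$ must be lattice-free in their respective planes. (3) Impose that $F_1$ and $F_2$ each contain an integer point in the relative interior of each edge (from the triangular facets meeting that edge) — this pins down $F_1$ up to translation, and then $F$ and $a$ are forced. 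Concretely I expect $F$ to be the quadrilateral $\conv(\{-e_1,-e_2,2e_1,2e_2\})$ (up to unimodular equivalence, the one with an interior point and the right edge-lattice-point pattern) and $a = (1,1,3)$, giving $P \equiv M_9$. (4) Rule out the remaining candidate quadrilaterals for $F$ by showing they either make $P$ not lattice-free, or prevent all four triangular facets from having relative-interior integer points, or violate maximality (e.g. $P \subseteq \R^2 \times [0,3]$ could be enlarged).

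**Main obstacle.** The delicate point is the case analysis over the admissible shapes of $F$ together with the divisibility bookkeeping mod $3$: because $a_3 = 3$, the vertices of $F_1$ and $F_2$ live on the finer lattices $\tfrac13\Z^2$, so one must simultaneously use (i) lattice-freeness of $F_1$ and $F_2$ in their planes, (ii) the requirement that $F_1, F_2$ have an integer point interior to each edge, and (iii) integrality of $F$ and of $a$, to knock out all configurations except the one yielding $M_9$. I would streamline this by first using the parity/area tools: $F_1$ lattice-free with an integer point interior to each of its four edges forces $F_1$ to be a maximal lattice-free quadrilateral in its plane, so $w(F_1) \le 1 + 2/\sqrt3$ and $\area(F_1) \le 2$ by Theorem~\ref{area,width}; this severely limits how $F$ can scale and translate under $F_1 = \tfrac13 a + \tfrac23 F$, after which only finitely many $F$ survive and can be checked directly. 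The bookkeeping is routine once organized this way, but organizing it is the real work.
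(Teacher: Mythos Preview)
Your normalization step~(1) contains a genuine error: you assert that $F$ has exactly one interior integer point and then invoke Theorem~\ref{i=1:enum} to list candidates, but Lov\'asz only gives $i(F)\ge 1$, and Lemma~\ref{2ineq} (which would give $i(F)\le 2$) applies only for $a_3\ge 4$. In fact the conclusion $P\equiv M_9$ has base $B=\conv(\{-e_1,-e_2,2e_1,2e_2\})$ with $i(B)=3$ (the points $(0,0)$, $(1,0)$, $(0,1)$ are all interior), so your case analysis over Figures~\ref{quad1-1}--\ref{quad1-7} would never find the answer.

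The paper sidesteps this entirely by never classifying $F$ up front. Instead it works only with the section $F':=F_1=P\cap(\R^2\times\{1\})$. The key observation (which you mention but do not exploit) is the reflection trick: any integer point $p$ at height~$2$ in the relative interior of a triangular facet yields the integer point $2p-a$ at height~$1$ in the same facet, so $F'$ already has an integer point in the relative interior of each of its four edges. Hence $F'$ is a maximal lattice-free quadrilateral with $F'\cap\Z^3=\{0,1\}^2\times\{1\}$ after normalization; its vertices lie in $\tfrac{1}{3}\Z^3$ and avoid $[0,1]^2\times\{1\}$, and a short direct enumeration of such $F'$ gives a unique solution up to unimodular transformation. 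Then $F$ and $a$ are recovered from $F'$. Your overall architecture (constrain the height-$1$ slice, use the $\tfrac13$-lattice for its vertices) is right, but you should drop the classification of $F$ and run the argument on $F'$ alone.
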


\begin{proof}
If $p \in P \cap (\R^2 \times \{2\})$ is an integer point in
the relative interior of a facet of $P$, then $2p - a \in P
\cap (\R^2 \times \{1\})$ is also an integer point in the
relative interior of the same facet of $P$.
Consequently, $F' := P \cap (\R^2 \times \{1\})$ contains
precisely four integer points, one in the relative interior
of each of its edges.
Without loss of generality assume $F'\cap \Z^3 = \{0,1\}^2
\times \{1\}$.
By convexity, $\vertt(F')$ lies in the union of $(0,1)
\times \R \times \{1\}$ and $\R \times (0,1) \times \{1\}$.
On the other hand $\vertt(F') = \frac{1}{3} a + \frac{2}{3}
\vertt(F) \subseteq \frac{1}{3} \Z^3$.
Hence $\vertt(F')$ lies in the union of
$\{\frac{1}{3},\frac{2}{3}\} \times \frac{1}{3} \Z \times
\{1\}$ and $\frac{1}{3} \Z \times
\{\frac{1}{3},\frac{2}{3}\} \times \{1\}$.
Clearly, $\vertt(F')$ is disjoint with $[0,1]^2 \times
\{1\}$.
A simple analysis of all possible cases reveals that, by a
unimodular transformation, only one $F'$ is possible and we
can assume that $F' := \frac{1}{3} e_1 + \frac{1}{3} e_2 +
e_3 + \conv(\{\frac{4}{3} e_1, -\frac{2}{3} e_1, \frac{4}{3}
e_2, -\frac{2}{3} e_2\})$.
Thus, up to a unimodular transformation, $F$ is a translate
of $\conv(\{2 e_1, -e_1, 2 e_2, -e_2\})$.
This implies the assertion. 
\end{proof}

In the following we assume that $a_3 \geq 4$ and show that
no further maximal lattice-free quadrangular pyramid $P$
exists.
The proof consists of the following steps.
First, we construct all bases which are possible for such
a pyramid $P$.
Second, we argue that only two of them can appear as bases
for $a_3 \geq 11$ and analyze these two separately.
Third, the other bases are ruled out by a computer
enumeration. 

We start with a lemma which shall be used later for
simplices in Section \ref{four.facets} as well.

\begin{lemma} \label{2ineq}
Let $P \in \M$ be a simplex or a quadrangular pyramid with
base $F \subseteq \R^2 \times \{0\}$ and apex $a = (a_1,
a_2, a_3)$, where $h := a_3 \geq 4$.
Then $w(F) = 2$ and the following inequalities hold:
\begin{equation} \label{bases.bound}
  2 i(F) + b(F) \le \floor{\frac{6h-4}{h-2}} \le 10.
\end{equation}
In particular, if $P$ is a simplex (resp.~a quadrangular
pyramid), then $(i(F),b(F)) \in Z_S$ (resp.~$Z_Q$),  where
\begin{align*}
Z_S :=& \{(1,j) : j = 3, \dots, 8\} \cup \{(2,j) : j = 3,
\dots, 6\}, \\ 
Z_Q :=& \{(1,j) : j = 4, \dots, 8\} \cup \{(2,j) : j = 4,
\dots, 6\}.
\end{align*}
\end{lemma}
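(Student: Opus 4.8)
The plan is to extract the inequality $2i(F) + b(F) \le \lfloor (6h-4)/(h-2) \rfloor$ from the interaction of three facts: $F$ is an integral polygon with at least one interior integer point, the section $F' = P \cap (\R^2 \times \{1\})$ is lattice-free in $\R^2 \times \{1\}$, and the linear map $x \mapsto \tfrac{h-1}{h}\,\pi(x)$ (for a pyramid; $\tfrac{1}{h}(\pi(a) + (h-1)\,\pi(F))$ more precisely) carries $F$ onto the $\{z = 1\}$-section after rescaling from the apex. First I would set up coordinates with $F \subseteq \R^2 \times \{0\}$, apex $a$ with $a_3 = h \ge 4$, so that $F' = \tfrac{1}{h} a + \tfrac{h-1}{h} F$ (reading the $xy$-part) when $P$ is a pyramid, and $F'$ is the corresponding Minkowski combination of the two triangular bases when $P$ is a simplex. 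The key quantitative input is Pick's formula $\area(F) = i(F) + \tfrac{b(F)}{2} - 1$, so $2i(F) + b(F) = 2\area(F) + 2$; thus bounding $2i(F)+b(F)$ is equivalent to bounding $\area(F)$.

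Next I would bound $\area(F)$ via the lattice width. Since $P$ is maximal lattice-free and $P \subseteq \R^2 \times [0,h]$, and since $F$ has an interior integer point, $F$ is itself a lattice-free polygon in $\R^2$ with $w(F) \ge 2$ (if $w(F) \le 1$ then $F$ lies between two consecutive lattice lines and has no interior lattice point). On the other hand, the section $F'$ scales $F$ by $\tfrac{h-1}{h} < 1$, so $w(F') = \tfrac{h-1}{h} w(F)$; since $F'$ is lattice-free with respect to $\Z^2 \times \{1\}$, and since a lattice-free convex body in the plane has lattice width at most $1 + \tfrac{2}{\sqrt 3} < 2.16$ by \eqref{lwidth.bound}, I get $\tfrac{h-1}{h} w(F) \le 1 + \tfrac{2}{\sqrt3}$, hence for $h \ge 4$ we have $w(F) \le \tfrac{h}{h-1}\bigl(1 + \tfrac{2}{\sqrt3}\bigr) < 3$, so $w(F) = 2$ (it is an integer for an integral polygon and at least $2$). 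Now apply \eqref{area vs small width} of Theorem~\ref{area,width} to the lattice-free set $F'$ with $w = w(F') = \tfrac{2(h-1)}{h} \in (1,2]$: $\area(F') \le \tfrac{w^2}{2(w-1)}$. Since $\area(F) = \bigl(\tfrac{h}{h-1}\bigr)^2 \area(F')$, substituting $w = \tfrac{2(h-1)}{h}$ gives $\area(F) \le \tfrac{h^2}{(h-1)^2}\cdot \tfrac{4(h-1)^2/h^2}{2(2(h-1)/h - 1)} = \tfrac{4}{2\,(h-2)/h} = \tfrac{2h}{h-2}$. Hence $2i(F)+b(F) = 2\area(F)+2 \le \tfrac{4h}{h-2}+2 = \tfrac{6h-4}{h-2}$, and since the left side is an integer we may take the floor; for $h \ge 4$ this floor is at most $\lfloor (6\cdot4-4)/(4-2)\rfloor = 10$ (the function $\tfrac{6h-4}{h-2}$ decreases in $h$).

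Finally, to get the explicit sets $Z_S$ and $Z_Q$: we have $i(F) \ge 1$ because a maximal lattice-free polytope has an interior lattice point on each facet, so $b(F) = 2\area(F) + 2 - 2i(F)$ with $\area(F) \ge \tfrac32$ (smallest area of a lattice polygon with an interior point is... one uses that $b(F) \ge 3$), giving $2i(F)+b(F) \ge 5$; combined with the upper bound $\le 10$ and the parity/integrality constraints ($i(F)\in\{1,2\}$ forced by $2i(F) \le 9$, and $b(F) \ge 3$ always, $b(F) \ge 4$ for a quadrangular pyramid since the base then has at least four vertices) one enumerates the admissible pairs, which are exactly $Z_S$ for simplices ($b(F) \ge 3$) and $Z_Q$ for quadrangular pyramids ($b(F) \ge 4$). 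The main obstacle I anticipate is the bookkeeping in the scaling identity $w(F') = \tfrac{h-1}{h}w(F)$ and the careful justification that $F$ (not just $F'$) is lattice-free with $w(F) \ge 2$ — one must rule out the degenerate possibility that $F$ has no interior lattice point, which is where maximality of $P$ and the facet-interior-point property are used; everything after that is a routine computation with Pick's formula and Theorem~\ref{area,width}.
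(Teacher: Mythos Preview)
Your derivation of $w(F)=2$ and of the inequality
$2i(F)+b(F)\le\lfloor(6h-4)/(h-2)\rfloor\le 10$ is correct and is exactly the
paper's argument (modulo a slip where you call $F$ ``lattice-free''; you clearly
mean $F'$ there, and use it correctly afterwards).

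The gap is in the ``In particular'' part. You claim that
``$i(F)\in\{1,2\}$ is forced by $2i(F)\le 9$'', but that inequality (wherever it
comes from) only gives $i(F)\le 4$, and even the sharper consequence
$2i(F)\le 10-b(F)\le 7$ (resp.\ $\le 6$) yields only $i(F)\le 3$ for both the
triangular and the quadrangular base. The pair $(i(F),b(F))=(3,3)$ satisfies
$2i(F)+b(F)=9\le 10$ and is realised by an integral triangle of lattice width
$2$ (for instance $\conv\{(0,0),(4,1),(1,2)\}$), and $(3,4)$ is likewise not
excluded by the displayed inequality. So the case $i(F)=3$ does \emph{not}
drop out of the Pick/area bound; it has to be ruled out by a separate
three-dimensional argument.

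In the paper this is the main work of the lemma: one assumes $i(F)\ge 3$,
normalises so that $\pi(F)=[0,2e_1]$, uses Lemma~\ref{projection non-free} to
find an interior point of $\pi(P)$ in $s\Z^{d-1}$, and then invokes the
structure of the minimal elements $\cR^2(e_1)$ (Remark~\ref{R^2(0)}) together
with the concave ``X-ray'' length function $f(x)=\operatorname{length}(\pi^{-1}(x)\cap P)$
to force $f>1$ at an interior integer point of $\pi(P)$, contradicting
lattice-freeness of $P$. Without this (or an equivalent substitute) your
enumeration of $Z_S$ and $Z_Q$ is incomplete.
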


\begin{proof} 
Let $F' := P \cap (\R^2 \times \{1\})$.
Since $F$ contains an integer point in its relative interior
we have $w(F) \geq 2$.
Assume that $w(F) \geq 3$.
Then $h \geq 4$ implies $w := w(F') = w(F) \frac{h-1}{h}
\geq \frac{9}{4} > 1 +\frac{2}{\sqrt{3}}$.
Hence, by Theorem \ref{area,width}, $F'$ is not lattice-free
which is a contradiction.
Thus, we have $w(F) = 2$ and it follows
$2 > w = w(F) \frac{h-1}{h} \geq \frac{3}{2}$.
Applying Theorem \ref{area,width} to $F'$, we obtain
\begin{equation} \label{quad.pyr}
  \area(F) = \left(\frac{h}{h-1}\right)^2 \area(F') \le
  \left(\frac{h}{h-1}\right)^2 \frac{w^2}{2(w-1)}
  = \frac{2h}{h-2},
\end{equation}
where the last equality follows from $w = 2 \frac{h-1}{h}$.
Consequently, combining \eqref{quad.pyr} and Pick's formula
and using the fact that $\floor{\frac{6h-4}{h-2}}$ is
monotonically nonincreasing for $h \geq 4$, we arrive at the
stated inequalities.

We now show that $i(F) \le 2$.
Assume the contrary, i.e., $i(F) \ge 3$.
Performing an appropriate unimodular transformation to $P$
we can assume that $\pi(F) = [o,2 e_1]$.
For $x \in \pi(P)$ let $f(x)$ be the length of the line
segment $\pi^{-1}(x) \cap P$.

The conditions $w(F)=2$ and $i(F) \ge 3$ imply $f(e_1) \ge 3$. By Lemma~\ref{projection non-free}, $\pi(P)$ contains an
integer point in its interior. The relative interior of $[e_1,\pi(a)]$ does not contain integer points, since otherwise the value of $f$ at the integer point in $[e_1,\pi(a)] \setminus \{e_1\}$ closest to $e_1$ would be $>1$ yielding a contradiction to the lattice-freeness of $P$. Thus, the interior of $\conv (\{o,e_1,\pi(a)\})$ or $\conv (\{e_1, 2 e_1, \pi(a)\})$ contains an integer point. By symmetry reasons, we may assume that for $T:=\conv (\{o,e_1, \pi(a)\})$ one has  $\intr (T) \cap \integer^2 \ne \emptyset$.

Let $R$ be an element of $\cR^2(e_1)$ contained in $T$ and such that the relative
interior of $R$ contains an interior integer point of
$T$.
Note that $R$ is equivalent to one of the polygons $R_1,
\ldots, R_4$ in Remark~\ref{R^2(0)}.

\emph{Case~1:} $R \equiv R_1 \modulo{\Aff(\integer^2)}$.
Then $R=[e_1,p]$ for some $p \in T \cap \Z^2$ and such
that the point $\frac{1}{2}(e_1+p)$ is integer and in the
interior of $T$.
By the concavity of $f$, one has 
\begin{align*}
f \left( \frac{1}{2}(e_1+p) \right) \ge \frac{1}{2} f(e_1) +
\frac{1}{2} f(p) \ge \frac{1}{2} f(e_1) \ge \frac{3}{2}  > 1.
\end{align*}
Thus, a contradiction to the lattice-freeness of $P$.

\emph{Case~2:} $R \equiv R_4 \modulo{\Aff(\integer^2)}$.
Then $R= \conv \{e_1, p, q\}$ for some $p, q \in T \cap
\integer^2$ and $\frac{1}{3}(e_1 + p + q)$ is integer and
in the interior of $T$.
By the concavity of $f$, we have 
\begin{align*}
f \left( \frac{1}{3} (e_1 + p + q) \right) \ge \frac{1}{3}
\Big( f(e_1) + f(p) + f(q) \Big) \ge \frac{1}{3} f(e_1) \ge
1.
\end{align*}
It follows that $f(p)=f(q)=0$, since otherwise one has $f \left( \frac{1}{3} (e_1 + p + q) \right) > 1$ yielding 
a contradiction to the lattice-freeness of $P$. Then, in view of the choice of $T$, we have $p, q \in [o,\pi(a)]$. The equality $\{p,q\} = \{o,\pi(a)\}$ would imply that $a_3=3$ contradicting the assumption. Thus, one of the points $p$, $q$ (say $p$) lies in the relative interior of $[o,\pi(a)]$. We use the point $2p-q$, which is the integer point on $[o,\pi(a)] \setminus [p,q]$ closest to $p$. 

We shall use the following property of $R_4$.
Let $r_1, r_2, r_3$ be the vertices of $R_4$.
Then the segment joining $r_1$ and $2 r_2 - r_3$ (the
reflection of $r_3$ with respect to $r_2$) contains
precisely two integer points in its relative interior.
Consider the subcase that the point $2p -q$ lies in the
relative interior of $[o,\pi(a)]$.
Then the relative interior of $[e_1,2p-q]$ is contained in
the interior of $T$.
Taking into account the indicated property of $R_4$ we see
that the relative interior of $[e_1,2p-q]$ contains two
integer points.
Thus, applying the arguments as in Case~1, we arrive at a
contradiction.
For the subcase that the point $2p -q$ coincides with $o$ or
$\pi(a)$, the fact that the relative interior of
$[e_1,2p-q]$ contains two integer points contradicts the
fact that the segments $[o,e_1]$ and $[e_1,\pi(a)]$ do not
contain integer points in their relative interiors.

\emph{Case~3:} $R \equiv R_i \modulo{\Aff(\integer^2)}$ for
$i \in \{2,3\}$.
Then there exists an edge $e$ of $R$ incident to $e_1$ which
contains at least three integer points.
Since the edge $[o,2e_1]$ of $\pi(P)$ contains three integer
points and the integer point $e_1$ is between the two
remaining integer points, it follows that the edge $e$ is
not contained in the boundary of $\pi(P)$.
Thus, on $e$ we can find an integer point $p$ such that
$\frac{1}{2}(e_1 + p)$ is integer and in the interior of
$\pi(P)$.
But then, applying the same arguments as in Case~1 we
arrive at a contradiction.

So far, we have shown that $i(F) \in \{1,2\}$ and $2 i(F) +
b(F) \le 10$. 
If $P$ is a simplex, then $b(F) \geq 3$.
Thus, $(i(F),b(F)) \in Z_S$ in this case.
If $P$ is a quadrangular pyramid, then $b(F) \geq 4$.
Thus, we have $(i(F),b(F)) \in Z_Q$.
\end{proof}

In order to analyze quadrangular pyramids $P \in \M$ further
we need a list of all integral quadrilaterals $Q$ in the
plane with $w(Q) = 2$ and $(i(Q),b(Q)) \in Z_Q$ since these
quadrilaterals are candidates for the base of $P$.
By \eqref{bases.bound}, it follows that $2 i(F) + b(F) \leq
6$ for $a_3 \geq 11$ which implies that the base $F$ of such
a pyramid has exactly one integer point in its relative
interior and exactly the four vertices as the only integer
points on its boundary.
From Figure~\ref{i=1}, it follows that only two
quadrilaterals qualify as a base for $P$ in this case
(Figure~\ref{quad1-1} and \ref{quad1-2}).
We will analyze these two possible bases separately from the
others. However, we will first prove the following lemma.

\begin{lemma} \label{i=2,i=3}
Let $Q \subseteq \R^2$ be an integral quadrilateral with
$w(Q) = 2$, $i(Q) = 2$, and $b(Q) \in \{4,5,6\}$.
Then, up to a unimodular transformation, $Q$ is one of the
quadrilaterals depicted in Figure~\ref{i=2}.
\begin{figure}[ht]
  \centering
  \subfigure[\label{quad2-1}]{\includegraphics[height=1.5cm]{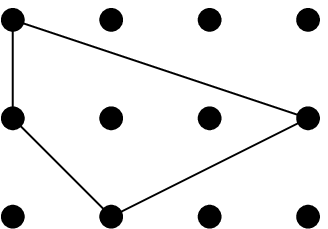}}
  \qquad
  \subfigure[\label{quad2-2}]{\includegraphics[height=1.5cm]{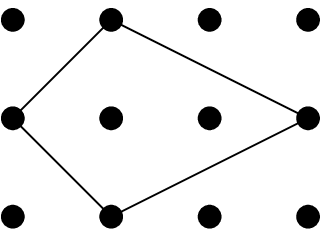}}
  \qquad
  \subfigure[\label{quad2-3}]{\includegraphics[height=1.5cm]{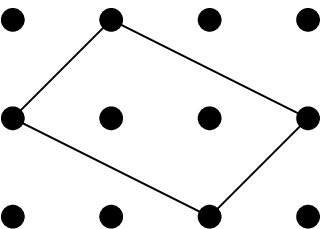}}
  \qquad
  \subfigure[\label{quad2-4}]{\includegraphics[height=1.5cm]{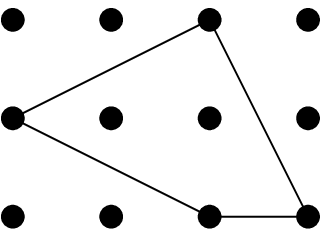}}
  \qquad
  \subfigure[\label{quad2-5}]{\includegraphics[height=1.5cm]{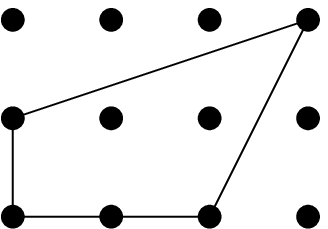}} \\
  \subfigure[\label{quad2-6}]{\includegraphics[height=1.5cm]{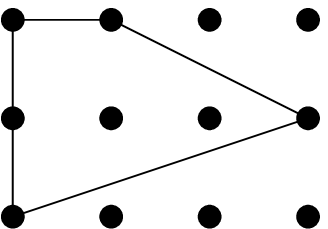}}
  \qquad
  \subfigure[\label{quad2-7}]{\includegraphics[height=1.5cm]{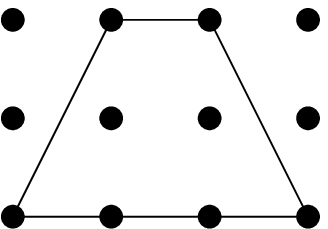}}
  \qquad
  \subfigure[\label{quad2-8}]{\includegraphics[height=1.5cm]{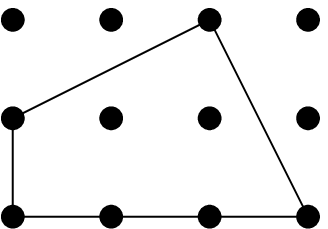}}
  \qquad
  \subfigure[\label{quad2-9}]{\includegraphics[height=1.5cm]{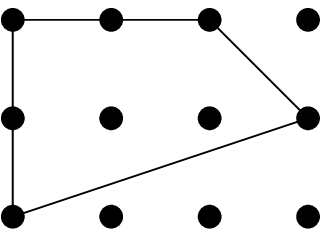}}
  \qquad
  \subfigure[\label{quad2-10}]{\includegraphics[height=1.5cm]{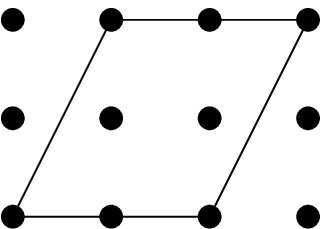}}
  \caption{All integral quadrilaterals $Q$ with $w(Q) = 2$,
    $i(Q) = 2$, and $b(Q) \in \{4,5,6\}$}
  \label{i=2}
\end{figure}

\end{lemma}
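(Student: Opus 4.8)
The plan is to put $Q$ into a normal form governed by the width hypothesis and then run a bounded case analysis on the positions of its vertices.

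First I would record a basic fact: for an integral polygon the width $w(Q,u)$ in a primitive direction $u\in\Z^2$ is a difference of values of the integral linear functional $x\mapsto u^\top x$ at two vertices, hence an integer; it is $\ge 1$ because $Q$ is two-dimensional, and it cannot equal $1$ when $i(Q)\ge 1$, since a strip between two consecutive lattice lines has no interior lattice point. Thus the hypothesis $w(Q)=2$ just means $w(Q,u)=2$ for some primitive $u$, and conversely any integral polygon with $i(Q)\ge 1$ and $w(Q,u)=2$ for some $u$ has $w(Q)=2$. Applying a unimodular map sending such a $u$ to $e_2$, followed by an integer translation, I may assume $Q\subseteq\R\times[0,2]$ with a vertex of $Q$ on each of the lines $\R\times\{0\}$ and $\R\times\{2\}$. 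Every interior lattice point of $Q$ then has second coordinate in $(0,2)\cap\Z=\{1\}$, and a short supporting-line argument shows that the relative interior of the horizontal chord $Q\cap(\R\times\{1\})$ lies in $\intr(Q)$; hence $i(Q)=2$ forces this chord to be a segment whose relative interior contains exactly two (necessarily consecutive) lattice points of $\R\times\{1\}$.

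Next I would split into cases according to the number $n_1$ of vertices of $Q$ lying on $\R\times\{1\}$. Since no three vertices of a quadrilateral are collinear, each of the lines $\R\times\{0\}$, $\R\times\{1\}$, $\R\times\{2\}$ carries at most two vertices of $Q$, and since $\R\times\{0\}$ and $\R\times\{2\}$ carry at least one each, the triple (number on $y=0$, $n_1$, number on $y=2$) is one of $(2,0,2)$, $(2,1,1)$, $(1,1,2)$, $(1,2,1)$; the unimodular reflection $(x,y)\mapsto(x,2-y)$ identifies $(2,1,1)$ with $(1,1,2)$, leaving three structural cases. A vertex of $Q$ on $\R\times\{1\}$ must be an endpoint of the chord above, hence one of the two lattice points adjacent to the interior pair, which pins the chord down completely when $n_1\in\{1,2\}$.

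In each structural case I would write $Q$ as the convex hull of its vertices, those on $\R\times\{0\}$ and $\R\times\{2\}$ carrying unknown integer coordinates. Intersecting the edges of $Q$ with $\R\times\{1\}$ expresses the endpoints of the chord as averages of these coordinates; requiring the chord to contain exactly the prescribed two interior lattice points and no others gives linear constraints over $\Z$ or $\tfrac{1}{2}\Z$ on the coordinates, and Pick's formula turns $b(Q)\le 6$ into $\area(Q)\le 4$. Using the residual symmetries — integer translations, shears $(x,y)\mapsto(x+ky,y)$ and the reflections $(x,y)\mapsto(-x,y)$ and $(x,y)\mapsto(x,2-y)$, all preserving the strip $\R\times[0,2]$ — to move one vertex on $\R\times\{0\}$ to the origin and shear a vertex on $\R\times\{2\}$ into a bounded window, these constraints leave only finitely many quadrilaterals, while non-degeneracy discards the collinear configurations. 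Each survivor automatically has $w(Q)=2$ by the first paragraph, $i(Q)=2$ by the chord condition, and $b(Q)\in\{4,5,6\}$ because it is a quadrilateral with $\area(Q)\le 4$; grouping the survivors into unimodular equivalence classes, and separating the classes by invariants such as $b(Q)$, $\area(Q)$ and the multiset of edge lengths, leaves exactly the ten quadrilaterals of Figure~\ref{i=2}.

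I expect the genuine difficulty to lie entirely in the bookkeeping of the last step: checking that no structural subcase or integer solution of the chord-plus-area constraints is overlooked, and that the reduction modulo unimodular equivalence neither merges inequivalent quadrilaterals nor retains duplicates. The mildly delicate point is tracking the parity of the coordinate sums that decide whether the chord endpoints are integral or half-integral; beyond the normalization of the first paragraph there is no conceptual obstacle.
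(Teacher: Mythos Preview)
Your plan is correct and essentially coincides with the paper's proof. The paper normalizes by placing the two interior points at $(1,0)$ and $(2,0)$ (so the strip is $\R\times[-1,1]$ instead of your $\R\times[0,2]$, a harmless translation) and then runs the same bounded case analysis; the only organizational difference is that the paper splits first by the value of $b(Q)\in\{4,5,6\}$ and then, within each case, by the number of vertices on the middle line, whereas you propose splitting first by the vertex distribution $(n_0,n_1,n_2)$ and treating $b(Q)$ afterwards --- the resulting finite enumeration is identical.
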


\begin{proof}
Let $Q$ be an integral quadrilateral in the plane satisfying
$w(Q) = 2$ and $i(Q) = 2$.
We divide the proof according to the number of integer
points on the boundary of $Q$.

\emph{Case~1:}  $i(Q) = 2$ and $b(Q) = 4$.
Pick's formula gives $\area(Q) = 3$ in this case.
Without loss of generality we assume that the two interior
integer points are placed at $(1,0)$ and $(2,0)$.
This implies that for any $u \in \Z^2 \setminus \{o, \pm
e_2\}$ we have $w(Q,u) \geq 3$ and therefore it must hold
$v_2 \in \{0, \pm 1\}$ for each vertex $v = (v_1, v_2)$ of
$Q$.
We distinguish three subcases based on the number of
vertices of $Q$ that lie on the line $y = 0$.

\emph{Subcase~1a:} Two vertices of $Q = \conv(\{a,b,c,d\})$ lie on the
line $y = 0$.
Then, one vertex is $a = (0,0)$ and the other $c = (3,0)$.
Let the remaining two vertices $b$ and $d$ satisfy $d_2 = 1
= - b_2$.
We can assume that $d = (0,1)$ for if $d = (d_1,1)$ we apply
the unimodular transformation
$(x,y) \mapsto (x - d_1 y,y)$.
For convexity reasons it follows that $b \in \{(1,-1),
(2,-1), (3,-1), (4,-1), (5,-1)\}$.
Choices $b = (1,-1)$ and $b = (5,-1)$ are equivalent and
lead to the quadrilateral shown in Figure~\ref{quad2-1},
$b = (2,-1)$ and $b = (4,-1)$ lead to Figure~\ref{quad2-2}
and $b = (3,-1)$ leads to Figure~\ref{quad2-3}.

\emph{Subcase 1b:} One vertex of $Q = \conv(\{a,b,c,d\})$ lies on the
line $y = 0$. 
Without loss of generality assume that $a = (0,0)$ and
$b$, $c$, and $d$ satisfy $b_2 = 1 = - c_2 = - d_2$.
It follows that $c_1 = d_1 + 1$ since $b(Q) = 4$, by
assumption.
Without loss of generality we can place $b$ at $(0,1)$.
By convexity of $Q$ and since $(1,0)$ and $(2,0)$ are the
only interior integer points of $Q$ we obtain $c = (5,-1)$
and $d = (4,-1)$ giving the quadrilateral shown in
Figure~\ref{quad2-4}.

\emph{Subcase 1c:} No vertex of $Q = \conv(\{a,b,c,d\})$ lies on the line
$y = 0$.
Without loss of generality let $a_2 = b_2 = 1 = - c_2 =
- d_2$. 
It follows that $b_1 = a_1 + 1$ and $c_1 = d_1 + 1$.
Thus, $\area(Q) = 2$ which contradicts Pick's formula.

\emph{Case 2:} $i(Q) = 2$ and $b(Q) = 5$.
Pick's formula gives $\area(Q) = 3{.}5$.
Placing the two interior integer points of $Q$ at $(1,0)$
and $(2,0)$ as above implies again that $v_2 \in \{0, \pm
1\}$ for each vertex $v = (v_1, v_2)$ of $Q$.
If two vertices of $Q$ lie on the line $y = 0$, then $Q$ has
no edge with a relative interior integer point, a
contradiction to $b(Q) = 5$.
If no vertex of $Q$ lies on the line $y = 0$, then $\area(Q)
= 3$, a contradiction to Pick's formula.
Thus, precisely one vertex of $Q = \conv(\{a,b,c,d\})$ lies
on the line $y = 0$.
Place it at $a = (0,0)$.
Without loss of generality let $b_2 = c_2 = -1 = - d_2$.
Using an appropriate unimodular transformation we can assume
that $d = (0,1)$.
Thus, either the edge connecting $b$ and $c$ or the edge
connecting $c$ and $d$ has a relative interior integer point
which is $\frac{1}{2}(b+c)$ or $\frac{1}{2}(c+d)$,
respectively.
In the first case we end up with $b = (3,-1)$ and $c =
(5,-1)$ (Figure~\ref{quad2-5}), whereas the latter leads to
$b = (5,-1)$ and $c = (6,-1)$ (Figure~\ref{quad2-6}).

\emph{Case 3:}  $i(Q) = 2$ and $b(Q) = 6$. 
Pick's formula gives $\area(Q) = 4$.
Placing the two interior integer points of $Q$ at $(1,0)$
and $(2,0)$ as above implies again that $v_2 \in \{0, \pm
1\}$ for each vertex  $v = (v_1, v_2)$ of $Q$.
If two vertices of $Q$ lie on the line $y = 0$, then $Q$ has
no edge with a relative interior integer point, a
contradiction to $b(Q) = 6$. 
We consider two subcases.

\emph{Subcase 3a:} No vertex of $Q = \conv(\{a,b,c,d\})$ lies on the line
$y = 0$.
Without loss of generality assume that $a_2 = b_2 = 1 =
- c_2 = - d_2$.
We either have $b_1 = a_1 + 2$ and $c_1 = d_1 + 2$ or $b_1 =
a_1 + 1$ and $c_1 = d_1 + 3$.
Using an appropriate unimodular transformation we can assume
that $a = (0,1)$.
Then, the first case leads to $b = (2,1)$, $c = (3,-1)$, and
$d = (1,-1)$ (Figure~\ref{quad2-10}), whereas the latter
leads to $b = (1,1)$, $c = (4,-1)$, and $d = (1,-1)$
(Figure~\ref{quad2-7}).

\emph{Subcase 3b:} One vertex of $Q = \conv(\{a,b,c,d\})$ lies on the
line $y = 0$.
Without loss of generality assume that $a = (0,0)$ and
$b$, $c$, and $d$ satisfy $b_2 = 1 = - c_2 = - d_2$.
Using an appropriate unimodular transformation we can assume
that $b = (0,1)$. 
Then, the edge connecting $c$ and $d$ has either two or one
relative interior integer points.
In the first case we obtain $c = (5,-1)$ and $d = (2,-1)$
(Figure~\ref{quad2-8}).
In the second case both edges, the one connecting $c$ and
$d$ and the one connecting $b$ and $c$ have each one
relative interior integer point and it follows $c = (6,-1)$
and $d = (4,-1)$ (Figure~\ref{quad2-9}).
\end{proof}

Lemma \ref{i=2,i=3} completes the list of the possible
bases of a quadrangular pyramid $P \in \M$: precisely the
quadrilaterals shown in Figure~\ref{quad1-1}--\ref{quad1-7}
and \ref{i=2} qualify for a base of $P$.
We will now show that there is no quadrangular pyramid
$P \in \M$ with $a_3 \geq 11$.

\begin{lemma} \label{DiamondBasis}
Let $P \subseteq \R^3$ be a pyramid with base $\conv(\{ \pm
e_1, \pm e_2 \})$ and apex $a = (a_1, a_2, a_3) \in \Z^3$,
where $a_3 \geq 4$.
Then $P$ is not maximal lattice-free.
\end{lemma}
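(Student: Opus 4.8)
The plan is to argue by contradiction: suppose $P$ is maximal lattice-free. By \cite[Proposition~3.3]{Lovasz89}, $P$ is then lattice-free and contains an integer point in the relative interior of each of its facets. Since $a_3\ge 4$, the apex $a$ does not lie in $\aff(B)=\R^2\times\{0\}$, so $P$ is three-dimensional and its facets are exactly the base $B$ together with the four triangles $\conv\{v,v',a\}$, where $[v,v']$ runs through the four edges of the square $B$. First I would normalize the apex. The shears $(x,y,z)\mapsto(x-q_1z,\,y-q_2z,\,z)$ with $q_1,q_2\in\Z$, together with the maps $(x,y,z)\mapsto(\pm x,\pm y,z)$ and $(x,y,z)\mapsto(y,x,z)$, are unimodular and fix the plane $\R^2\times\{0\}$ pointwise, hence also $B$; applying a suitable composition I may assume $0\le a_1\le a_3/2$ and $0\le a_2\le a_3/2$.

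Next I would exploit a single horizontal section, $F_1:=P\cap(\R^2\times\{1\})$, which is a genuine two-dimensional polygon because $1<a_3$. From $P=\conv(B\cup\{a\})$ one sees that, under the identification of $\R^2\times\{1\}$ with $\R^2$, $F_1$ equals $\tfrac{1}{a_3}(a_1,a_2)+\tfrac{a_3-1}{a_3}B$ (with $B$ now regarded as a subset of $\R^2$), i.e.\ the $\ell_1$-ball of radius $(a_3-1)/a_3$ centred at $c:=\tfrac{1}{a_3}(a_1,a_2)$. The relative interior of $F_1$ lies in $\intr(P)$, so it contains no point of $\Z^2$; since $0\le a_1,a_2\le a_3/2$, the $\ell_1$-distance from $c$ to $\Z^2$ equals $(a_1+a_2)/a_3$, and lattice-freeness forces $(a_1+a_2)/a_3\ge(a_3-1)/a_3$. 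As $a_1+a_2\le a_3$ as well, I conclude $a_1+a_2\in\{a_3-1,\,a_3\}$.

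The last step concerns the facet $T:=\conv\{e_1,e_2,a\}$ of $P$. Being a facet of the maximal lattice-free polyhedron $P$, it contains an integer point in its relative interior, so by Pick's formula its normalized area with respect to the planar lattice $\Z^3\cap\aff(T)$ is at least $2\cdot 1+3-2=3$. On the other hand, for any lattice triangle $\conv\{p_0,p_1,p_2\}$ in $\R^3$ this normalized area equals the greatest common divisor of the entries of the cross product $(p_1-p_0)\times(p_2-p_0)$; here $(e_2-e_1)\times(a-e_1)=(a_3,\,a_3,\,1-a_1-a_2)$, so the normalized area of $T$ is $\gcd(a_3,\,a_1+a_2-1)$. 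By the previous paragraph $a_1+a_2-1\equiv-1$ or $-2\pmod{a_3}$, hence $\gcd(a_3,\,a_1+a_2-1)\le 2<3$ --- a contradiction. Thus $P$ cannot be maximal lattice-free.

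I expect the only genuine bookkeeping to be (a) checking that the listed transformations really suffice to move $(a_1,a_2)$ into $[0,a_3/2]^2$ while fixing $B$, and (b) recording the standard identity ``normalized area of $T=\gcd(a_3,a_1+a_2-1)$''; both are routine. As a variant, in place of step (b) one could instead bring in lattice-freeness of the section at height $2$ to force $a_3\le 6$ and then finish by inspecting the three remaining values of $a_3$, but the gcd argument avoids any case distinction on $a_3$.
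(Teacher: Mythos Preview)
Your argument is correct and in fact works for every $a_3\ge 2$, not only $a_3\ge 4$. The normalization to $0\le a_1,a_2\le a_3/2$ is legitimate (the dihedral symmetries of the diamond combined with the shears do the job), the height-$1$ section is the open $\ell_1$-ball you describe so lattice-freeness gives $a_1+a_2\ge a_3-1$, and the identity ``normalized area of a lattice triangle in $\R^3$ equals the $\gcd$ of the entries of the cross product'' is a standard fact (it follows from $d(\Z^3\cap V)=|n|/\gcd(n_1,n_2,n_3)$ for the primitive normal direction). Your final $\gcd$ computation is accurate.

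Your route differs substantially from the paper's. The paper normalizes only to $0\le a_i<a_3$, writes out the facet inequalities of $P$ explicitly, and then tests the four points $(0,0,1)$, $(1,0,1)$, $(0,1,1)$, $(1,1,1)$ to force $|a_1+a_2-a_3|\le 1$ and $|a_1-a_2|\le 1$; under these two constraints it shows that $(1,1,2)\in\intr(P)$ whenever $a_3>4$, and then disposes of the five surviving apices with $a_3=4$ by inspection. So the paper exhibits an interior lattice point directly, at the cost of a small case analysis at $a_3=4$. Your argument never needs the constraint on $|a_1-a_2|$ nor the point $(1,1,2)$: instead of producing an interior lattice point, you show that the facet $\conv\{e_1,e_2,a\}$ is too thin (normalized area at most $2$) to carry a relative-interior lattice point, contradicting the Lov\'asz criterion. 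This is more conceptual and uniform in $a_3$; the paper's approach, on the other hand, is more self-contained in that it avoids invoking Pick's formula in an affine sublattice and the cross-product identity.
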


\begin{proof}
By an appropriate unimodular transformation we can assume
that $0 \le a_i < a_3$ for $i = 1,2$.
We represent the base by $F := \conv(\{ \pm e_1, \pm e_2 \})
= \{y \in \R^3: |y_1|+|y_2| \le 1,\ y_3 = 0\}$.
Then, $P = \{x \in \R^3: x = (1 - \lambda) y + \lambda a
\mbox{ for } 0 \le \lambda \le 1 \mbox{ and } y \in F\}$ and
therefore
\begin{align*}
  \intt(P) & = \{x \in \R^3: x = (1 - \lambda) y + \lambda a
               \mbox{ for } 0 < \lambda < 1 \mbox{ and } y
               \in \relintt(F)\} \\
           & = \{x \in \R^3: \frac{1}{1-\lambda} x -
               \frac{\lambda}{1-\lambda} a \in \relintt (F)
               \text{ for some } 0 < \lambda < 1\} \\
           & = \{x \in \R^3: |x_1 - \lambda a_1| + |x_2 -
               \lambda a_2| < 1 - \lambda \text{ and } x_3 =
               \lambda a_3 \text{ for some } 0 < \lambda <
               1\}.
\end{align*}
It follows 
\begin{equation} \label{ZCapIntrP}
  \Z^3 \cap \intt(P) = \{x \in \Z^3: |a_3 x_1 - a_1 x_3| +
  |a_3 x_2 - a_2 x_3| < a_3 - x_3,\  x_3 \in \{1, \dots, a_3
  - 1\} \}.
\end{equation}
From \eqref{ZCapIntrP} we derive the following equivalences:
\begin{itemize} 
  \item $(0,0,1) \in \intt(P)$ if and only if $a_1 + a_2 < a_3 - 1$;
  \item $(1,1,1) \in \intt(P)$ if and only if $a_1 + a_2 > a_3 + 1$; 
  \item $(1,0,1) \in \intt(P)$ if and only if $a_1 - a_2 > 1$;
  \item $(0,1,1) \in \intt(P)$ if and only if $a_2 - a_1 > 1.$ 
\end{itemize} 
If one of the above mentioned conditions is fulfilled, $P$
is not lattice-free.
We can therefore assume that the following two inequalities
are satisfied:
\begin{eqnarray}
  |a_1 + a_2 - a_3| & \le & 1, \label{RemCase1} \\
  |a_1 - a_2|       & \le & 1. \label{RemCase2}
\end{eqnarray}
It can be verified directly that for $a_1,a_2,a_3$
satisfying \eqref{RemCase1} and \eqref{RemCase2} one has
$|a_3 - 2a_1| + |a_3 - 2a_2| \le 2$.
In view of \eqref{ZCapIntrP}, $(1,1,2) \in \intt(P)$ if and
only if $|a_3 - 2 a_1|+|a_3 - 2 a_2| < a_3-2$.
Hence, when \eqref{RemCase1} and \eqref{RemCase2} are
fulfilled, $(1,1,2)$ is an interior point of $P$ if $a_3 >
4$.
It remains to exclude the case $a_3 = 4$.
Integer vectors $a = (a_1,a_2,a_3)$ satisfying
\eqref{RemCase1}, \eqref{RemCase2} and $a_3=4$ are precisely
vectors from the set
$\{(2,2,4),(2,1,4),(3,2,4),(1,2,4),(2,3,4)\}$.
All these vectors do not correspond to maximal lattice-free
pyramids.
\end{proof}

\begin{lemma} \label{ArrowBasis}
Let $P \subseteq \R^3$ be a pyramid with base
$\conv(\{ e_1, e_2, \pm (e_1 + e_2) \})$ and apex
$a = (a_1, a_2, a_3) \in \Z^3$, where $a_3 \geq 4$.
Then $P$ is not maximal lattice-free.
\end{lemma}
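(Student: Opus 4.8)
The plan is to follow the scheme of the proof of Lemma~\ref{DiamondBasis}. First I would normalise the apex by unimodular maps fixing the base. The shears $(x_1,x_2,x_3)\mapsto(x_1-k_1x_3,\,x_2-k_2x_3,\,x_3)$ with $k_1,k_2\in\Z$ are unimodular, fix the plane $\{x_3=0\}$ pointwise (hence fix the base), and send the apex to $(a_1-k_1a_3,\,a_2-k_2a_3,\,a_3)$; moreover the coordinate swap $(x_1,x_2,x_3)\mapsto(x_2,x_1,x_3)$ is unimodular and maps the base onto itself, since it only interchanges the base vertices $e_1$ and $e_2$ while fixing $e_1+e_2$ and $-(e_1+e_2)$. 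Because maximal lattice-freeness is a unimodular invariant, I may therefore assume $0\le a_1\le a_2<a_3$.

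Next I would write the base as $F=\{y\in\R^3:y_3=0,\ y_1\le 1,\ y_2\le 1,\ y_1-2y_2\le 1,\ -2y_1+y_2\le 1\}$ and, exactly as in Lemma~\ref{DiamondBasis}, describe the interior of the pyramid through the parametrisation $x=(1-\lambda)y+\lambda a$ with $\lambda=x_3/a_3$. Setting $A:=a_3x_1-a_1x_3$, $B:=a_3x_2-a_2x_3$ and $C:=a_3-x_3$, this yields
\begin{equation*}
  \Z^3\cap\intt(P)=\{x\in\Z^3:\ 1\le x_3\le a_3-1,\ A<C,\ B<C,\ A-2B<C,\ -2A+B<C\}.
\end{equation*}

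I would then split into three cases according to $a_1$. If $a_1\ge 2$, then, using $a_2\ge a_1\ge 2$ and $a_1,a_2\le a_3-1$, the point $(1,1,1)$ satisfies all four inequalities, so $P$ is not lattice-free. If $a_1=0$, then (using $a_3\ge 4$) either $(0,0,1)\in\intt(P)$ when $a_2\le 1$, or $(0,1,1)\in\intt(P)$ when $a_2\ge 2$, so again $P$ is not lattice-free. Finally, if $a_1=1$ I would argue via a facet rather than an interior point: since $[e_1+e_2,e_2]$ is an edge of the base, $T:=\conv(\{e_1+e_2,\,e_2,\,a\})$ is a facet of $P$, and it lies in the slab $\{x\in\R^3:0\le x_1\le 1\}$, whose integer points all have $x_1\in\{0,1\}$. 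But $T\cap\{x_1=1\}$ is the edge $[e_1+e_2,a]$ of $T$ and $T\cap\{x_1=0\}=\{e_2\}$ is a vertex of $T$, so $\relintr(T)$ contains no integer point. Since a maximal lattice-free polytope has an integer point in the relative interior of every facet, $P$ is not maximal lattice-free. These three cases are exhaustive, which proves the lemma.

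The only genuine work is the bookkeeping: getting the four facet inequalities of the ``arrow'' base correct and hence the description of $\Z^3\cap\intt(P)$, and then checking the short list of inequalities in the first two cases; once the slab containing $T$ is noticed, the case $a_1=1$ is immediate. If one preferred to stay closer to Lemma~\ref{DiamondBasis}, one could instead rule out interior lattice points at heights $1$ and $2$ and dispose of the remaining finite list of apices by inspection, but the facet argument for $a_1=1$ makes this unnecessary.
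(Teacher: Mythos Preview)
Your proof is correct and follows the paper's template (normalise the apex, write down the facet description of $\intr(P)$, split on the value of $a_1$); the cases $a_1\ge 2$ and $a_1=0$ match the paper's Case~1 and the first part of its argument essentially verbatim.

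The genuine difference is your treatment of $a_1=1$. The paper stays inside the ``interior lattice point'' framework: it shows $(0,1,1)\in\intr(P)$ unless $a_2\le 3$ and $(0,0,1)\in\intr(P)$ unless $2a_2\ge a_3$, which together with $a_3\ge 4$ leaves only the four apices $(1,2,4),(1,3,4),(1,3,5),(1,3,6)$, each of which is then checked by hand not to be maximal lattice-free. Your argument is cleaner: observing that when $a_1=1$ the triangular facet $T=\conv(\{e_2,e_1+e_2,a\})$ has lattice width~$1$ in the $e_1$-direction (its intersection with $\{x_1=0\}$ is the vertex $e_2$ and with $\{x_1=1\}$ is the opposite edge), so $\relintr(T)\cap\Z^3=\emptyset$, immediately contradicting maximality. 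This avoids the finite enumeration entirely and does not even use $a_3\ge 4$ in that case.
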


\begin{proof}
By an appropriate unimodular transformation we can assume
that $0 \le a_i < a_3$ for $i = 1,2$.
The set $\conv(\{e_1, e_2, \pm(e_1+e_2)\})$ is the set of
all $y = (y_1, y_2, y_3) \in \R^3$ satisfying 
\begin{align*}
  y_1 & \le 1, & y_1 - 2 y_2 & \le 1, & y_3 & = 0, \\
  y_2 & \le 1, & y_2 - 2 y_1 & \le 1. & &
\end{align*}
By this, $\intt(P)$ is the set of all $x = (x_1, x_2, x_3)
\in \R^3$ satisfying
\begin{align*}
  x_1 - \lambda a_1 & < 1 - \lambda, &
  x_1 - \lambda a_1 - 2 (x_2 - \lambda a_2) &
  < 1 - \lambda, & x_3 & = \lambda a_3, \\
  x_2 - \lambda a_2 &
  < 1 - \lambda, & x_2 - \lambda a_2 - 2 (x_1 - \lambda a_1) &
  < 1 - \lambda & & 
\end{align*}
for some $0 < \lambda < 1$. Consequently, $\Z^3 \cap
\intt(P)$ is the set of all $x = (x_1, x_2, x_3) \in \Z^3$
satisfying
\begin{align*}
  a_3 x_1 + (1 - a_1) x_3 & < a_3, &
  a_3 x_1 - 2a_3 x_2 + (1 - a_1 + 2a_2) x_3 & < a_3, &
  x_3 & \in \{1,\ldots, a_3 - 1\}, \\
  a_3 x_2 + (1 - a_2) x_3 & < a_3, &
  a_3 x_2 - 2a_3 x_1 + (1 - a_2 + 2a_1) x_3 & < a_3.
\end{align*}
From these inequalities we obtain that $(1,1,1) \in
\intt(P)$ if and only if $a_1 > 1$ and $a_2 > 1$.
Hence, lattice-freeness requires that $a_1 \in \{0,1\}$ or
$a_2 \in \{0,1\}$.
By symmetry, it suffices to consider the cases $a_1 = 0$ and
$a_1 = 1$.

\emph{Case 1}: $a_1 = 0$.
If $a_2 > 1$, then $(0,1,1) \in \intt(P)$.
Otherwise $(0,0,1) \in \intt(P)$.

\emph{Case 2}: $a_1 = 1$.
If $a_2 > 3$, then $(0,1,1) \in \intt(P)$.
Thus, we have $a_2 \le 3$.
If $2 a_2 < a_3$, then $(0,0,1) \in \intt(P)$.
So we have $2 a_2 \ge a_3$ and it follows $a_3 \in
\{4,5,6\}$.
Hence, $a \in \{(1,2,4), (1,3,4), (1,3,5), (1,3,6)\}$.
All these vectors do not correspond to maximal lattice-free
pyramids.
\end{proof}

Lemmas \ref{DiamondBasis} and \ref{ArrowBasis} restrict
potential quadrangular pyramids $P \in \M$ to satisfy $4
\leq a_3 \leq 10$.
Since, in addition, the set of possible bases is known from
Figures~\ref{quad1-3}--\ref{quad1-7} and \ref{i=2} we are
left with a finite list of quadrangular candidate pyramids.
Computer enumeration shows that none of them is maximal
lattice-free.


\subsection{Triangular prisms}

Let $P \in \M$ be a triangular prism.
We first show that the two triangular bases of $P$ are
translates.

\begin{lemma} \label{shape.prism}
Let $P \in \M$ be combinatorially equivalent to a triangular
prism.
Then $P$ is a prism, i.e., the two bases of $P$ are parallel
translates.
\end{lemma}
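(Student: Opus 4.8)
The plan is to exploit the combinatorial structure of a triangular prism together with the maximality of $P$ and the fact that each facet of a maximal lattice-free polytope contains a relative interior integer point. A triangular prism $P$ has two triangular facets $F_1, F_2$ (the bases) and three quadrilateral facets $G_1, G_2, G_3$ (the sides); each $G_i$ contains exactly one edge from $F_1$ and one (parallel-in-the-combinatorial-sense) edge from $F_2$. Write $F_1 = \conv(\{v_1, v_2, v_3\})$ and $F_2 = \conv(\{w_1, w_2, w_3\})$ with $v_i, w_i$ labelled so that the side facet $G_i$ is $\conv(\{v_j, v_k, w_j, w_k\})$ for $\{i,j,k\} = \{1,2,3\}$. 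The goal is to show $w_i - v_i$ is the same vector $u$ for all $i$, which is equivalent to saying $P = F_1 + [o, u]$.

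First I would set up coordinates so that $F_1 \subseteq \R^2 \times \{0\}$ and $P \subseteq \R^2 \times \R_{\ge 0}$, and so that $F_2 \subseteq \R^2 \times \{h\}$ for some $h > 0$; this is possible because the two triangular facets of a triangular prism are disjoint and each of the three quadrilateral facets meets both of them, which forces $F_2$ to lie in a hyperplane parallel to $\aff(F_1)$ (otherwise two of the side facets would intersect outside $P$, or $P$ would fail to be convex). Actually, the cleaner way: since $G_i$ is a planar quadrilateral containing the two segments $[v_j, v_k] \subseteq \aff(F_1)$ and $[w_j, w_k] \subseteq \aff(F_2)$, and a planar quadrilateral with two opposite edges forces those edges to be... no — a planar quadrilateral's two "opposite" edges need not be parallel. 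So instead I would argue: the line $\aff(F_1) \cap \aff(G_i)$ carries the edge $[v_j, v_k]$ and the line $\aff(F_2) \cap \aff(G_i)$ carries $[w_j, w_k]$; if $\aff(F_1)$ and $\aff(F_2)$ are not parallel they meet in a line $\ell$, and then for each $i$ the two edges $[v_j,v_k]$ and $[w_j,w_k]$ of $G_i$, lying in $\aff(G_i)$, would have to be arranged so that $G_i$ is a trapezoid; one then shows the three planes $\aff(G_i)$ together with $\aff(F_1), \aff(F_2)$ are incompatible with $P$ being bounded and three-dimensional with exactly this face lattice. This purely affine/combinatorial argument is the technical core.

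Once $\aff(F_1) \parallel \aff(F_2)$, put $F_1 \subseteq \R^2 \times \{0\}$, $F_2 \subseteq \R^2 \times \{h\}$. Then $\pi(F_1)$ and $\pi(F_2)$ are triangles in $\R^2$, and each side facet $G_i$, being planar and containing edge $[v_j, v_k]$ of $F_1$ and edge $[w_j, w_k]$ of $F_2$, forces $\pi([w_j, w_k])$ to be a translate of $\pi([v_j, v_k])$ along the direction in which $\aff(G_i)$ tilts — more precisely, $\aff(G_i)$ is a plane, so its intersections with the two parallel planes $z = 0$ and $z = h$ are parallel lines, hence $[v_j, v_k]$ and $[w_j, w_k]$ are parallel segments. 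Thus $\pi(F_2)$ is a triangle whose three edges are respectively parallel to the three edges of $\pi(F_1)$; two triangles with pairwise parallel edges are homothetic, so $\pi(F_2) = t \cdot \pi(F_1) + c$ for some scalar $t > 0$ and vector $c \in \R^2$. If $t \ne 1$, then $P$ is (combinatorially) a cone frustum and in fact $\conv(F_1 \cup F_2)$ has an apex — extend the side facets until they meet; I would then show that replacing $P$ by the cone up to that apex (or the opposite cone) yields a strictly larger lattice-free polytope unless the apex is "at infinity," i.e. $t = 1$, contradicting maximality of $P$. (Here one uses that the side facets carry interior lattice points and that a slightly enlarged pyramid/frustum remains lattice-free when $P$ was already maximal only if it coincides with $P$ — the standard argument via $\cR^d(a)$ or directly via checking that no new interior lattice point appears.) Hence $t = 1$, so $w_i - v_i$ has the same horizontal part $c$ for all $i$ and the same vertical part $h$, giving $w_i - v_i = (c, h) =: u$ independent of $i$, i.e. $F_2 = F_1 + u$ and $P = \conv(F_1 \cup (F_1 + u)) = F_1 + [o,u]$ is a genuine prism.

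I expect the main obstacle to be the case analysis showing $t = 1$: ruling out a nondegenerate frustum requires carefully producing a strictly larger lattice-free integral polytope, and one must make sure the enlargement does not swallow a lattice point in its interior. The natural move is to extend the three quadrilateral facets to meet at the apex $a$ of the cone over $F_1$ (assuming $t < 1$, say), consider $\conv(F_1 \cup \{a\})$, and invoke lattice-freeness of $P$ together with the position of the mandatory interior lattice points on the facets to derive that either this larger pyramid is still lattice-free — contradicting maximality — or $a$ already forces a contradiction with $a_3 \geq$ the relevant bound, echoing the pyramid analysis of Section~\ref{quad-pyramids}. Getting the bookkeeping of lattice points right in that extension is where the real work lies; everything else is routine affine geometry of parallel planes and homothetic triangles.
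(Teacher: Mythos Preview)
Your outline has two genuine gaps.

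First, the claim that a ``purely affine/combinatorial argument'' forces $\aff(F_1)\parallel\aff(F_2)$ is false: truncate a tetrahedron near one vertex by a generic plane and you obtain a polytope with two triangular and three quadrilateral facets---combinatorially a triangular prism---whose triangular facets are not parallel. So lattice information is needed already at this stage. The paper does not try to prove parallelism first; instead it assumes the three side hyperplanes $H_1,H_2,H_3$ meet at a point $p$ (the case to be excluded) and treats parallel and non-parallel bases separately. In the non-parallel case it exhibits an explicit interior lattice point of $P$ by translating the smaller triangle by a lattice vector into a parallel cross-section of $P$.

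Second, and more seriously, your plan for ruling out the homothety ratio $t\neq 1$ by extending the frustum to the full pyramid $S=\conv(F_1\cup\{a\})$ is circular. Elements of $\M$ are maximal among \emph{all} lattice-free convex sets, not just among integral ones; hence every proper convex superset of $P$, in particular $S$, automatically contains an interior lattice point. So $S$ is never lattice-free and no contradiction with maximality can arise this way. Your fallback (``$a$ forces a contradiction with $a_3\ge\dots$, echoing Section~\ref{quad-pyramids}'') does not work either: the height bounds there are proved for pyramids that themselves lie in $\M$, not for an auxiliary pyramid properly containing a member of $\M$, and there is no mechanism to transfer them.

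The paper closes the parallel, strictly homothetic case by a completely different and much shorter argument. Since the smaller integral triangle $T_1$ has an interior lattice point, $w(T_1)\ge 2$; since $T_2$ is a strictly larger integral homothet, $w(T_2)>2$ and hence $w(T_2)\ge 3$ (the lattice width of an integral polygon is an integer). With $T_2\subseteq\real^2\times\{0\}$, $T_1\subseteq\real^2\times\{h\}$ and $h\ge 2$, the lattice-free section $T'=P\cap(\real^2\times\{1\})$ satisfies
\[
w(T')\ =\ \tfrac{h-1}{h}\,w(T_2)+\tfrac{1}{h}\,w(T_1)\ \ge\ 3-\tfrac{1}{h}\ \ge\ \tfrac{5}{2}\ >\ 1+\tfrac{2}{\sqrt 3},
\]
contradicting the Hurkens bound \eqref{lwidth.bound} in Theorem~\ref{area,width}. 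This lattice-width estimate is the key idea your plan is missing.
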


\begin{proof}
Let $H_1$, $H_2$, and $H_3$ be the hyperplanes containing
the quadrilateral facets of $P$.
We show that $H_1$, $H_2$, and $H_3$ do not share a point.
Assume the contrary and choose $p \in H_1 \cap H_2 \cap
H_3$.
Let $T_2$ be the triangular facet of $P$ such that the
pyramid $S$ with base $T_2$ and apex $p$ contains $P$.
Let $T_1$ be the triangular facet of $P$ distinct from
$T_2$.
Let $q$ be a vertex of $T_2$ closest to $\aff(T_1)$ and let
$H$ be the hyperplane parallel to $\aff(T_1)$ and passing
through $q$.
If $T_1$ and $T_2$ are not parallel, then the relative
interior of $P \cap H$ is contained in the interior of $P$.
On the other hand $T_1 + q - r$, where $r$ is the integer
point $r = T_1 \cap [p,q]$, is contained in $P \cap H$.
Hence the relative interior of $P \cap H$ contains an
integer point, a contradiction.
Thus, $T_1$ and $T_2$ are parallel.
Then, since $T_2$ is a base of $P$ and $T_1$ is a section of
$S$ parallel to $T_2$, we infer that $T_1$ and $T_2$ are
homothetic.
By construction, $T_1$ is strictly smaller than $T_2$.
Since $T_1$ is an integral triangle which contains at least
one integer point in its relative interior we have $w(T_1)
\geq 2$.
Therefore, since $T_2$ is integer and strictly larger,
$w(T_2) \geq 3$.
Without loss of generality we assume that $T_2 \subseteq
\R^2 \times \{0\}$ and $T_1 \subseteq \R^2 \times \{h\}$
with $h \ge 2$ ($h = 1$ do not need to be considered since
the quadrangular facets of $P$ contain integer points in
their relative interior).
Let now $T':= P \cap (\R^2 \times \{1\})$.
It follows that
\[ w(T') = \frac{h-1}{h} w(T_2) + \frac{1}{h} w(T_1) \ge
\frac{3(h-1)+2}{h} = 3 - \frac{1}{h} \ge \frac{5}{2} > 1 +
\frac{2}{\sqrt{3}},\]
a contradiction to \eqref{lwidth.bound} in Theorem
\ref{area,width}, since $T'$ is a lattice-free polygon in
$\R^2 \times \{1\}$ with respect to the lattice $\Z^2 \times
\{1\}$.
Hence $H_1$, $H_2$, and $H_3$ do not share a point and $P$
is a prism.
\end{proof}

According to Lemma \ref{shape.prism} it suffices to
investigate triangular prisms $P \in \M$ whose triangular
facets are parallel translates.
Without loss of generality we assume that the triangular
facets $T_1,T_2$ of $P$ satisfy $T_2 \subseteq \R^2 \times
\{0\}$ and $T_1 \subseteq \R^2 \times \{h\}$ with $h \ge
2$.
From Theorem \ref{area,width} and the fact that $P$ is
lattice-free, it follows that the hyperplane $H := \R^2
\times\{1\}$ satisfies $w(P \cap H) \leq 1 +
\frac{2}{\sqrt{3}}$.
Hence, $1 + \frac{2}{\sqrt{3}} \geq w(P \cap H) = w(T_2)
\geq 2$ and since $w(T_2) \in \Z$ we obtain $2 = w(T_2) =
w(P \cap H)$.
Theorem \ref{area,width} yields $2 \geq \area(P \cap H) =
\area(T_2)$ and Pick's formula gives $2i(T_2) + b(T_2) \leq
6$ implying $i(T_2) = 1$ and $b(T_2) \in \{3,4\}$.
Thus, by Figure~\ref{i=1}, $P$ has two triangular facets
which are either the triangle shown in Figure~\ref{tria1-5}
or the triangle shown in Figure~\ref{tria1-3}.
We prove that for each of these two cases there exists
exactly one maximal lattice-free triangular prism, up to
a unimodular transformation.

\begin{lemma}
Let $P \in \M$ be a triangular prism whose triangular facets
are the triangle shown in Figure~\ref{tria1-5}. 
Then, $P$ is equivalent to $M_{10}$.
\end{lemma}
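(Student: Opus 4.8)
The plan is to make $P$ completely explicit as a prism over a normalized copy of the triangle of Figure~\ref{tria1-5}, and then read off the translation vector from the lattice-freeness of $P$. First I would normalize: by Lemma~\ref{shape.prism} the two triangular facets of $P$ are parallel translates, so after a unimodular transformation one of them is $T_2 := \conv(\{e_1,e_2,-e_1-e_2\}) \subseteq \R^2\times\{0\}$ (this is the triangle of Figure~\ref{tria1-5}), the other is $T_1 = T_2 + u$ with $u = (u_1,u_2,h)$ and $h\ge 2$, and $P = T_2 + [o,u]$. Since the vertical shears $(x,y,z)\mapsto(x+az,y+bz,z)$ with $a,b\in\Z$ are unimodular and fix $T_2$, I may additionally assume $0\le u_1,u_2 < h$.

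The key preliminary step is an exact description of the lattice-free translates of $T_2$: I would show that, for $w\in\R^2$, the translate $T_2+w$ is lattice-free if and only if $w\equiv(\tfrac13,\tfrac23)$ or $w\equiv(\tfrac23,\tfrac13)\pmod{\Z^2}$. Writing $\intr(T_2)=\{(x,y):x+y<1,\ 2x-y>-1,\ x-2y<1\}$, one checks that $T_2+w$ has an interior integer point unless $w$ lies on a line of \emph{each} of the three families $\{x+y=n\}$, $\{2x-y=n\}$, $\{x-2y=n\}$ ($n\in\Z$); these common points are exactly $\Z^2\cup((\tfrac13,\tfrac23)+\Z^2)\cup((\tfrac23,\tfrac13)+\Z^2)$, and of these only the last two classes yield lattice-free translates. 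The threefold unimodular symmetry of $T_2$ generated by $e_1\mapsto e_2\mapsto -e_1-e_2\mapsto e_1$ cuts the verification down to the behaviour along a single such line.

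Now I would pin down $u$. For $1\le j\le h-1$ the section $P\cap(\R^2\times\{j\})$ is, in the plane $z=j$, the translate $T_2+\tfrac{j}{h}(u_1,u_2)$, and it must be lattice-free because $P$ is. Applied at $j=1$, the description above forces $\tfrac1h(u_1,u_2)\equiv(\tfrac13,\tfrac23)$ or $(\tfrac23,\tfrac13)\pmod{\Z^2}$; in particular $\tfrac{u_1}{h}\equiv\tfrac13\pmod1$, so $3\mid h$, say $h=3m$, and then $0\le u_i<h$ leaves only $u=m(1,2,3)$ or $u=m(2,1,3)$. If $m\ge2$ then $0<3<h$, and the section at height $z=3$ equals $T_2+(1,2)$ (resp.\ $T_2+(2,1)$), a translate of $T_2$ by an integer vector; its relative interior therefore contains a point of $\Z^3$ lying in $\intr(P)$, contradicting lattice-freeness. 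Hence $m=1$, so $h=3$ and $u\in\{(1,2,3),(2,1,3)\}$. The reflection $e_1\leftrightarrow e_2$ is a unimodular symmetry of $T_2$ and interchanges the two vectors, so up to a unimodular transformation $u=(1,2,3)$ and $P=\conv(T_2\cup(T_2+(1,2,3)))=M_{10}$.

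The main obstacle is the characterization of the lattice-free translates of $T_2$ — the assertion that only two translation classes work; once that is in hand everything else is a short divisibility argument. For the statement of Theorem~\ref{main.thm} one should also record the (routine) check that $M_{10}$ is genuinely maximal lattice-free, i.e.\ that each of its two triangular facets and three parallelogram facets contains an integer point in its relative interior; for the parallelogram facets this amounts to exhibiting, for each edge direction, an interior integer point of the corresponding parallelogram spanned by that edge and $u=(1,2,3)$.
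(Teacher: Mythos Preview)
Your argument is correct and takes a genuinely different route from the paper's proof. The paper writes out the three quadrangular-facet inequalities of the prism explicitly in terms of $a=(a_1,a_2,a_3)$, tests the points $(0,0,1),(0,1,1),(1,1,1)$ against them to obtain three linear inequalities, solves these to force $a=a_1(1,2,3)$, and finally checks that $(1,2,3)\in\intr(P)$ when $a_1\ge 2$. You instead isolate a clean planar lemma --- the only lattice-free translates of $T_2=\conv(\{e_1,e_2,-e_1-e_2\})$ are those by $w\equiv(\tfrac13,\tfrac23)$ or $(\tfrac23,\tfrac13)\pmod{\integer^2}$ --- and then read the prism off from the height-$1$ slice together with an integrality/height-$3$ obstruction. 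This is more conceptual: the $3$-divisibility of $h$ and the shape of $u$ fall out immediately from the rigidity of the two admissible translation classes, and the threefold symmetry of $T_2$ does real work for you. The paper's approach, by contrast, is entirely self-contained and requires no auxiliary classification.

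Two small points. First, your sketch of the key planar lemma (``$T_2+w$ has an interior integer point unless $w$ lies on a line of each of the three families'') is correct but the justification as written is thin; the cleanest way to make it rigorous is exactly the kind of four-point check the paper performs at height~$1$ --- for $w\in[0,1)^2$ only $(0,0),(1,0),(0,1),(1,1)$ can lie in $\intr(T_2+w)$, and excluding all four forces $w\in\{(\tfrac13,\tfrac23),(\tfrac23,\tfrac13)\}$. Second, your closing remark about verifying that $M_{10}$ is actually in $\M$ is not needed for this lemma (the hypothesis already places $P$ in $\M$), though of course it belongs to the proof of Theorem~\ref{main.thm} as a whole.
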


\begin{proof}
Without loss of generality we assume that the two triangular
facets of $P$, denoted $F$ and $F'$, are given by
$F := \conv(\{e_1, e_2, -(e_1 + e_2)\})$ and $F' := a + F$,
where $a = (a_1, a_2, a_3)$ is the integer point in the
relative interior of $F'$.
By applying an appropriate unimodular transformation we can
further assume that $0 \leq a_i < a_3$ for $i = 1,2$.
Since the quadrangular facets of $P$ need to contain integer
points in their relative interior it holds $a_3 \geq 2$.
By symmetry, we assume $a_1 \leq a_2$.
In particular, we have $a_2 \geq 1$, otherwise $(0,0,1) \in
\intt(P)$.
We now set up the facet description of $P$ which is only
dependent on the parameters $a_1$, $a_2$, and $a_3$.
It follows that $\Z^3 \cap \intt(P)$ is the set of all
$x = (x_1, x_2, x_3) \in \Z^3$ satisfying
\begin{align*}
  a_3 x_1 - 2 a_3 x_2 + (2a_2 - a_1) x_3 & < a_3, &
  a_3 x_1 + a_3 x_2 - (a_1 &+ a_2) x_3 < a_3, \\
  a_3 x_2 - 2 a_3 x_1 + (2a_1 - a_2) x_3 & < a_3, &
  x_3 \in \{1,\ldots, &\,a_3 - 1\}.
\end{align*}
From these inequalities we obtain the following
equivalences:
\begin{itemize} 
  \item $(0,0,1) \in \intt(P)$ if and only if $-a_1 + 2a_2 < a_3$;
  \item $(0,1,1) \in \intt(P)$ if and only if $2a_1 < a_2$;  
  \item $(1,1,1) \in \intt(P)$ if and only if $a_3 < a_1 + a_2$. 
\end{itemize}
This implies that the following inequalities hold:
\begin{eqnarray}
  a_1 + a_3 & \le & 2 a_2, \label{kite.1} \\
  a_2       & \le & 2 a_1, \label{kite.2} \\
  a_1 + a_2 & \le & a_3. \label{kite.3}
\end{eqnarray}
Adding \eqref{kite.1} and \eqref{kite.3} yields $2 a_1 \leq
a_2$ and together with \eqref{kite.2} we obtain $a_2 = 2
a_1$.
Substituting this into \eqref{kite.1} and \eqref{kite.3}
leads to $a_3 \leq 3 a_1$ and $3 a_1 \leq a_3$ which means
that $a_3 = 3 a_1$.
It follows that $a = (a_1, 2a_1, 3a_1)$ for some $a_1 \geq
1$.
We infer that $(1,2,3) \in \intt(P)$ if $a_1 \geq 2$.
Thus, we have $a = (1,2,3)$ and end up with the triangular
prism $M_{10}$.
\end{proof}

\begin{lemma}
Let $P \in \M$ be a triangular prism whose triangular facets
are the triangle shown in Figure~\ref{tria1-3}.
Then, $P$ is equivalent to $M_{11}$.
\end{lemma}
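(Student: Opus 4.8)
The plan is to follow the scheme used for $M_{10}$ in the preceding lemma. First, by Lemma~\ref{shape.prism}, $P$ is a genuine prism, so its two triangular facets are parallel translates. Applying a unimodular transformation I would place the base triangle $F$, a unimodular copy of the triangle in Figure~\ref{tria1-3}, in $\R^2 \times \{0\}$ with its unique relative interior integer point at the origin, say $F = \conv(\{(-1,-1,0)^\top,(1,-1,0)^\top,(0,1,0)^\top\})$, and write the other triangular facet as $F' = F + a$, where $a = (a_1,a_2,a_3)^\top \in \Z^3$ is the integer point in the relative interior of $F'$. As in the surrounding discussion we have $a_3 \geq 2$ (the case $a_3 = 1$ being excluded because the quadrangular facets of $P$ carry relative interior integer points), and a shear parallel to $e_3$ lets me assume $0 \leq a_1, a_2 < a_3$; the reflection symmetry $x \mapsto (-x_1,x_2,x_3)^\top$ of $F$ would additionally give $2a_1 \leq a_3$, though I expect not to need it.

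Next I would write down the halfspace description of $P$ in terms of the parameters $a_1,a_2,a_3$: the two triangular facets give $0 \leq x_3 \leq a_3$, and each of the three edges of $F$, swept by $a$, contributes one affine inequality, which after clearing the positive denominator $a_3$ becomes an inequality with integer coefficients. From this one reads off $\Z^3 \cap \intt(P)$ as the set of integer points with $x_3 \in \{1,\dots,a_3-1\}$ satisfying the three corresponding strict inequalities.

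The decisive step is then to test a handful of small lattice points against this description. I would check that $(0,0,1)^\top$ lies in $\intt(P)$ precisely when $2a_1 - a_2 < a_3$, and $(1,0,1)^\top$ precisely when $2a_1 + a_2 > a_3$ (the remaining inequalities being automatic because $0 \leq a_1,a_2 < a_3$). Since $P$ is lattice-free, neither point is interior, so $2a_1 + a_2 \leq a_3 \leq 2a_1 - a_2$; combined with $a_2 \geq 0$ this forces $a_2 = 0$ and $a_3 = 2a_1$, i.e.\ $a = (a_1,0,2a_1)^\top$ with $a_1 \geq 1$. Finally, for $a_1 \geq 2$ a direct check shows $(1,0,2)^\top \in \intt(P)$, contradicting lattice-freeness; hence $a_1 = 1$, $a = (1,0,2)^\top$, and translating $P$ by the integer vector $(0,1,0)^\top$ turns $F$ into $\conv(\{\pm e_1, 2e_2\})$ and $a$ into the vector $u = (1,0,2)$ from the description of $M_{11}$. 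Thus $P$ is equivalent to $M_{11}$.

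The work is essentially bookkeeping, so I do not expect a serious obstacle: the only places that need care are setting up the three swept-edge inequalities with the correct signs and making sure the normalization is legitimate, and the only mildly creative point is choosing in advance which lattice points to test, namely $(0,0,1)^\top$ and $(1,0,1)^\top$ at height one and $(1,0,2)^\top$ at height two, so that lattice-freeness pins down all three parameters at once and no computer enumeration (of the kind used for the quadrangular pyramids) is needed.
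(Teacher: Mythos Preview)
Your proposal is correct and follows essentially the same approach as the paper: normalize the prism, write the three swept-edge inequalities, test two lattice points at height one to pin down $a_1,a_2,a_3$ up to a one-parameter family, and then test a point at height two to force $a_1=1$. The only difference is your choice of coordinates: by placing the unique interior integer point of the base at the origin rather than at $(0,1)$ (as the paper does with $F=\conv(\{\pm e_1,2e_2\})$), you obtain the cleaner pair of constraints $2a_1+a_2\le a_3\le 2a_1-a_2$, which immediately forces $a_2=0$ and $a_3=2a_1$; in the paper's coordinates the analogous inequalities leave $a_2\in\{0,1\}$ and a short extra case $a_2=0$ must be ruled out separately before reaching the same conclusion.
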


\begin{proof}
Without loss of generality we assume that the two triangular
facets of $P$, denoted $F$ and $F'$, are given by
$F := \conv(\{ \pm e_1, 2 e_2\})$ and $F' := a + F$, where
$a = (a_1, a_2, a_3)$ is the integer point in the relative
interior of $F'$.
By applying an appropriate unimodular transformation we can
further assume that $0 \leq a_i < a_3$ for $i = 1,2$.
Since the quadrangular facets of $P$ need to contain integer
points in their relative interior it holds $a_3 \geq 2$.
We now set up the facet description of $P$ which is only
dependent on the parameters $a_1$, $a_2$, and $a_3$.
It follows that $\Z^3 \cap \intt(P)$ is the set of all
$x = (x_1, x_2, x_3) \in \Z^3$ satisfying
\begin{align*}
  2 a_3 x_1 + a_3 x_2 - (2a_1 + a_2 - 1) x_3 & < 2 a_3, &
  - a_3 x_2 & + (a_2 - 1) x_3 < 0, \\
  -2 a_3 x_1 + a_3 x_2 + (2a_1 - a_2 + 1) x_3 & < 2 a_3, &
  x_3 & \in \{1,\ldots, a_3 - 1\}.
\end{align*}
From these inequalities we obtain the following
equivalences:
\begin{itemize} 
  \item $(0,1,1) \in \intt(P)$ if and only if $2a_1 + 1 < a_2 + a_3$;  
  \item $(1,1,1) \in \intt(P)$ if and only if $a_3 + 1 < 2 a_1 + a_2$. 
\end{itemize}
This implies that the following inequalities hold:
\begin{eqnarray}
  a_2 + a_3   & \le & 2 a_1 + 1, \label{sail.1} \\
  2 a_1 + a_2 & \le & a_3 + 1. \label{sail.2}
\end{eqnarray}
Adding \eqref{sail.1} and \eqref{sail.2} yields $a_2 \leq 1$
and therefore $a_2 \in \{0,1\}$. We distinguish into two
cases.

\emph{Case 1}: $a_2 = 0$.
If $2 a_1 > 1$, then $(1,0,1) \in \intt(P)$. 
Thus, we have $2 a_1 \leq 1$ implying $a_1 = 0$.
Substituting this into \eqref{sail.1} leads to $a_3 \leq 1$
which is a contradiction.

\emph{Case 2}: $a_2 = 1$.
From \eqref{sail.1} and \eqref{sail.2}, we obtain $a_3 = 2
a_1$, i.e., $a = (a_1,1,2a_1)$ for some $a_1 \geq 1$.
If $a_1 \geq 2$ we have $(1,1,2) \in \intt(P)$.
Thus, it holds $a = (1,1,2)$ which leads to the triangular
prism $M_{11}$.
\end{proof}


\section{Elements in $\boldsymbol{\M}$ with four facets} \label{four.facets}

Let $P \in \M$ be a simplex and let $F$ be an arbitrary
facet of $P$.
Using a unimodular transformation we can assume that $F
\subseteq \R^2 \times \{0\}$.
Throughout this section we refer to $F$ as the base of $P$
and denote the vertex $a = (a_1, a_2, a_3)$ of $P$ which is
not contained in $\aff(F)$ as the apex of $P$, where we
assume $a_3 > 0$.
We can further assume that $a_3 \geq 2$ since for $a_3 = 1$,
$P$ is contained in the split $\{x \in \R^3 : 0 \leq x_3
\leq 1\}$ which is a contradiction to its maximality.

We first consider simplices $P \in \M$ with $a_3 = 2$ and
$a_3 = 3$, respectively.
Let $F':= P \cap (\R^2 \times \{1\})$.
Since each facet of $P$ contains an integer point in its
relative interior, it follows that $F'$ is a maximal
lattice-free triangle.
Indeed, if $a_3 = 2$, then any integer point $w =
(w_1,w_2,w_3)$ in the relative interior of one of the three
facets different from $F$ satisfies $w_3 = 1$.
On the other hand, if $a_3 = 3$, then any integer point
$w = (w_1,w_2,w_3)$ in the relative interior of one of the
three facets different from $F$ with $w_3 = 2$ guarantees
that the point $2w - a \in F'$ is also an integer point in
the relative interior of the same facet as $w$.
According to Dey and Wolsey \cite{DeyWolsey08} the maximal
lattice-free triangles can be partitioned into three types:
\begin{itemize}
  \item a type 1 triangle, i.e., a triangle with integer
    vertices and exactly one integer point in the relative
    interior of each edge,
  \item a type 2 triangle, i.e., a triangle with at least
    one fractional vertex $v$, exactly one integer point in
    the relative interior of the two edges incident to $v$
    and at least two integer points on the third edge,
  \item a type 3 triangle, i.e., a triangle with exactly
    three integer points on the boundary, one in the
    relative interior of each edge.
\end{itemize}

\begin{figure}[ht]
  \unitlength=1mm
  \begin{center}
    \subfigure[\label{mlf.tr1} Type 1 triangle]{
      \begin{picture}(23,20)
        \put(2,0){\includegraphics[width=20\unitlength]{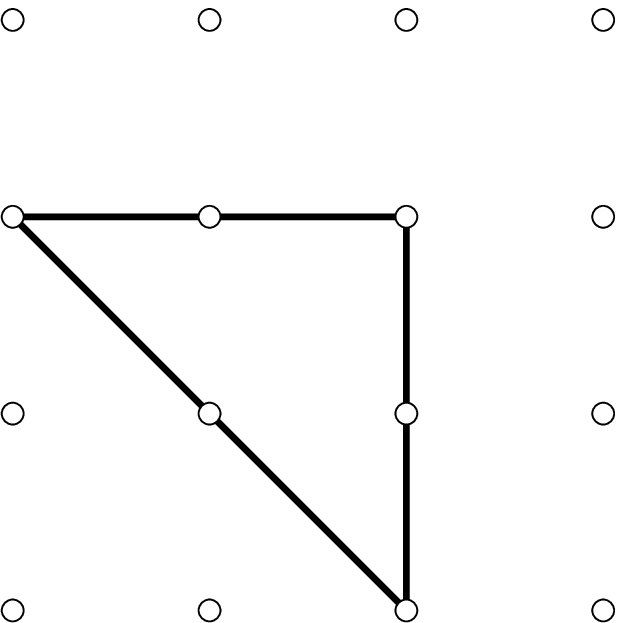}}
      \end{picture}
    } 
    \quad 
    \subfigure[\label{mlf.tr2} Type 2 triangle]{
      \begin{picture}(23,20)
        \put(2,0){\includegraphics[width=20\unitlength]{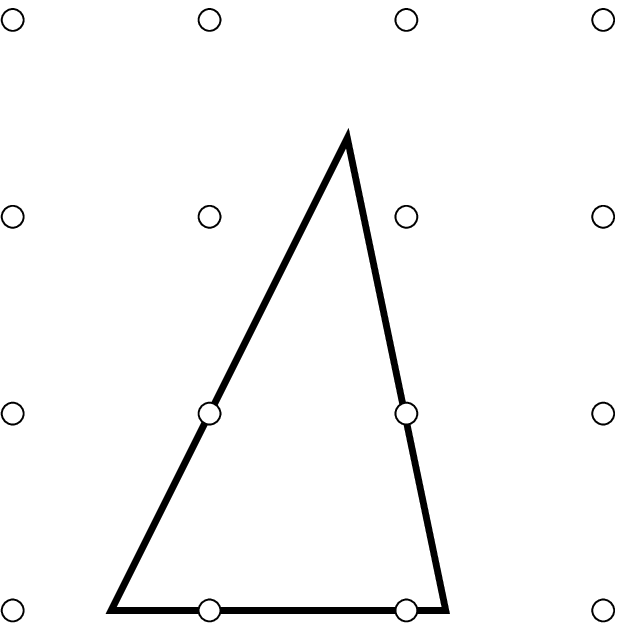}}
      \end{picture}
    }
    \quad
    \subfigure[\label{mlf.tr3} Type 3 triangle]{
      \begin{picture}(23,20)
        \put(2,0){\includegraphics[width=20\unitlength]{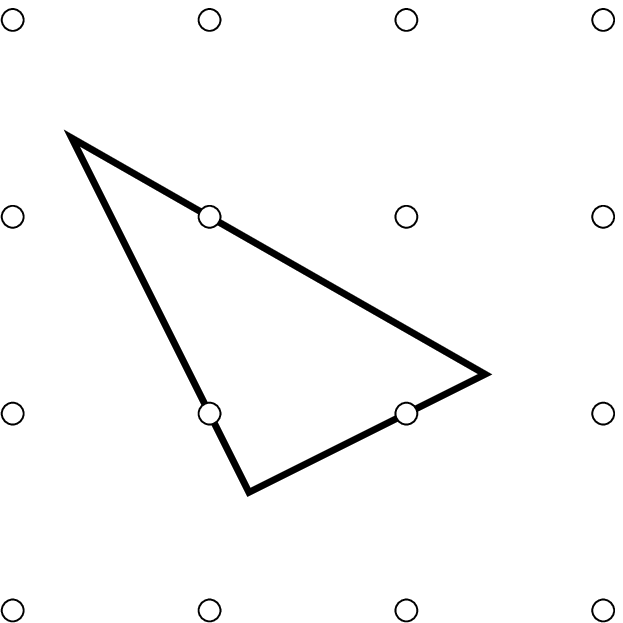}}
      \end{picture}
    } 
  \end{center}
  \caption{\label{mlf.sets} All types of maximal
    lattice-free triangles in dimension two}
\end{figure}

\begin{lemma}
Let $P \in \M$ be a simplex with base $F \subseteq \R^2
\times \{0\}$ and apex $a = (a_1, a_2, a_3)$, where $a_3 \in
\{2,3\}$.
Then $P$ is equivalent to one of the simplices $M_1$, $M_2$,
$M_3$, $M_6$, or $M_7$.
\end{lemma}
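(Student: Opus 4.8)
The plan is to slice $P$ by the hyperplane $H := \R^2\times\{1\}$. Put $F' := P\cap H$; by the discussion preceding the lemma, $F'$ is a maximal lattice-free triangle, so it is of type~1, 2 or~3 in the sense of Dey and Wolsey \cite{DeyWolsey08}, and in particular every edge of $F'$ carries an integer point in its relative interior. Since $P$ is a simplex with base $F$ at height $0$ and apex $a$ at height $a_3\in\{2,3\}$, one has $F' = \frac{a_3-1}{a_3}F + \frac1{a_3}a$; hence $F'$ and $F$ are positively homothetic with ratio $\frac{a_3}{a_3-1}\in\{2,\tfrac32\}$, and, $F$ and $a$ being integral, $\vertt(F')\subseteq \frac1{a_3}\Z^2\times\{1\}$ (identifying $\R^2\times\{1\}$ with $\R^2$).

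First I would enumerate, up to unimodular transformation, all maximal lattice-free triangles $T$ with $\vertt(T)\subseteq\frac12\Z^2$ (the case $a_3=2$), respectively $\vertt(T)\subseteq\frac13\Z^2$ (the case $a_3=3$). This list is finite: by Theorem~\ref{area,width} one has $1<w(T)\le 1+\frac2{\sqrt3}$ and $\area(T)$ is bounded, so combining this with Pick's formula, the type trichotomy, and the restriction on the denominators of the vertices leaves only a small explicit collection of triangles $T$ (with the unique type-1 triangle $\conv(\{o,2e_1,2e_2\})$ always among them). For each such $T$, set $F := \frac{a_3}{a_3-1}\bigl(F'-\frac1{a_3}a\bigr)$; integrality of $F$, together with the normalisation $0\le a_1,a_2<a_3$ obtained by a unimodular shear fixing $\R^2\times\{0\}$, leaves only finitely many candidate apices $a$.

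For each candidate pair $(F,a)$ I would then write down the halfspace description of $P=\conv(F\cup\{a\})$ and compute $\Z^3\cap\intt(P)$ explicitly, exactly as in the proofs of Lemmas~\ref{DiamondBasis} and~\ref{ArrowBasis}, discarding every $a$ for which $P$ fails to be lattice-free or fails to be maximal (some facet having no integer point in its relative interior). The surviving simplices are then matched, by exhibiting explicit unimodular maps (e.g.\ aligning the chosen base facet and comparing the lattice lengths of the three edges at a vertex), with $M_1$, $M_2$, $M_3$, $M_6$, $M_7$; in particular $M_4$ and $M_5$ cannot occur here, since for these two simplices the vertex opposite to every facet lies at lattice distance at least $4$ from that facet.

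I expect the main obstacle to be the bookkeeping in the two enumerations: proving that the list of admissible sections $F'$ is complete, and, for each of them, correctly sorting all candidate apices into ``maximal lattice-free'' and ``not''. No individual step is deep — each is a bounded lattice-point computation — but there are many of them, and the homothety-plus-integrality constraint must be applied with care to avoid overlooking or double-counting cases.
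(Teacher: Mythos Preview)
Your proposal is correct and follows essentially the same route as the paper: slice at height~$1$, use that $F'$ is a maximal lattice-free triangle of type~1, 2, or~3, and exploit $\vertt(F')\subseteq\frac{1}{a_3}\Z^2$ to pin down $F'$ and hence $F$. The paper organises the casework by triangle type rather than by first listing all admissible $F'$, and it skips your explicit lattice-freeness check (unnecessary, since $P\in\M$ is assumed), but these are cosmetic differences. One small caution: the shear you invoke to normalise $0\le a_1,a_2<a_3$ fixes $F$ but moves $F'$, so you cannot apply it after having fixed $F'$ in standard position; the paper sidesteps this by noting only that $F$ is a translate of a specific triangle and then identifying $P$ directly.
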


\begin{proof}
We distinguish into three cases according to the type of
triangle of $F' := P \cap (\R^2 \times \{1\})$.

\emph{Case 1}: $F'$ is a triangle of type 1.
Without loss of generality assume $F' = \conv(\{e_3, 2e_1 +
e_3, 2e_2 + e_3\})$.
Thus, if $a_3 = 2$, $F$ is a translate of $\conv(\{o, 4e_1,
4e_2\})$ which leads to $M_2$.
If $a_3 = 3$, $F$ is a translate of $\conv(\{o, 3e_1,
3e_2\})$ which leads to $M_3$.

\emph{Case 2}: $F'$ is a triangle of type 2.
Without loss of generality assume that the edge of $F'$
having at least two relative interior integer points
contains the points $(0,0,1)$ and $(0,1,1)$ in its relative
interior, and let the vertex $w = (w_1,w_2,1)$ of $F'$
opposite to this edge satisfy $w_1 > 1$.
By an appropriate unimodular transformation we can assume
that the remaining two edges pass through the points
$(1,0,1)$ and $(1,1,1)$.
First assume $a_3 = 2$.
Then $\vertt(F') = \frac{1}{2}a + \frac{1}{2}\vertt(F)
\subseteq \frac{1}{2} \Z^3$.
Hence, the three vertices of $F'$ lie in $\frac{1}{2} \Z
\times \{\frac{1}{2}\} \times\{1\}$ and $\{0\} \times
\frac{1}{2} \Z \times\{1\}$.
It follows that $F' = \conv(\{(0,\frac{3}{2},1),
(0,-\frac{1}{2},1), (2,\frac{1}{2},1)\})$ or $F' =
\conv(\{(0,2,1), (0,-1,1), (\frac{3}{2},\frac{1}{2},1)\})$.
Thus, in the former case, $F$ is a translate of
$\conv(\{(0,3,0), (0,-1,0), (4,1,0)\})$ leading to $M_7$,
whereas in the latter case $F$ is a translate of
$\conv(\{(0,4,0), (0,-2,0), (3,1,0)\})$ leading to $M_1$.
Now assume $a_3 = 3$.
Then $\vertt(F') = \frac{1}{3}a + \frac{2}{3}\vertt(F)
\subseteq \frac{1}{3} \Z^3$.
Hence, two vertices of $F'$ lie in $\{0\} \times \frac{1}{3}
\Z \times\{1\}$ and the third vertex lies either in
$\frac{1}{3} \Z \times \{\frac{1}{3}\} \times\{1\}$ or
$\frac{1}{3} \Z \times \{\frac{2}{3}\} \times\{1\}$.
By symmetry, we can assume that the third vertex lies in
$\frac{1}{3} \Z \times \{\frac{2}{3}\} \times\{1\}$.
It follows that $F' = \conv(\{(0,2,1), (0,-2,1),
(\frac{4}{3},\frac{2}{3},1)\})$ or $F' =
\conv(\{(0,\frac{4}{3},1), (0,-\frac{2}{3},1),
(2,\frac{2}{3},1)\})$.
Thus, in the former case, $F$ is a translate of
$\conv(\{(0,3,0), (0,-3,0), (2,1,0)\})$ leading to $M_1$,
whereas in the latter case $F$ is a translate of
$\conv(\{(0,2,0), (0,-1,0), (3,1,0)\})$ leading to $M_6$.

\emph{Case 3}: $F'$ is a triangle of type 3.
Without loss of generality assume that $F' =
\conv(\{u,v,w\})$ with $u_1 < 0$, $1 < u_2$, $1 < v_1$, $0 <
v_2 < 1$, $0 < w_1 < 1$, $w_2 < 0$, and $u_3 = v_3 = w_3 =
1$, see Figure~\ref{fig.mlpf.tria}.
\begin{figure}[ht]
  \centering
  \includegraphics[height=4.8cm]{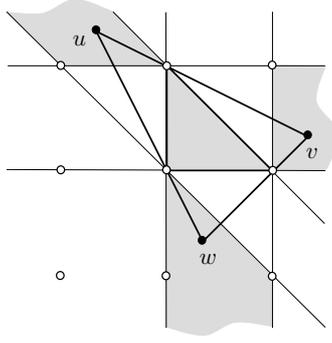}
  \caption{Triangle of type $3$}
  \label{fig.mlpf.tria}
\end{figure}
First assume $a_3 = 2$.
Then $\vertt(F') = \frac{1}{2}a + \frac{1}{2}\vertt(F)
\subseteq \frac{1}{2} \Z^3$.
Thus, it follows $v_2 = w_1 = \frac{1}{2}$ and hence we
obtain $v = (\frac{3}{2},\frac{1}{2},1)$ and $w =
(\frac{1}{2},-\frac{1}{2},1)$.
This implies $u = (-\frac{3}{2},\frac{3}{2},1)$.
However, the edge connecting $u$ and $w$ contains the two
integer points $(0,0,1)$ and $(-1,1,1)$ in its relative
interior which is a contradiction to the fact that $F'$ is
of type $3$.
Now assume $a_3 = 3$.
Then $\vertt(F') = \frac{1}{3}a + \frac{2}{3}\vertt(F)
\subseteq \frac{1}{3} \Z^3$.
Thus, it follows $v_2 \in \{\frac{1}{3},\frac{2}{3}\}$ and
$w_1 \in \{\frac{1}{3},\frac{2}{3}\}$.
Since the edge connecting $v$ and $w$ goes through the point
$(1,0,1)$, the following cases are possible:
\medskip

$ \begin{array}{lllll}
  v = (\frac{5}{3},\frac{1}{3},1),\ w = (\frac{1}{3},-\frac{1}{3},1)
  & \Longrightarrow & u = (-\frac{5}{3},\frac{5}{3},1)
  & \Longrightarrow & F' \mbox{ is of type 2}, \\[2mm]
  v = (\frac{7}{3},\frac{2}{3},1),\ w = (\frac{1}{3},-\frac{1}{3},1)
  & \Longrightarrow & u = (-\frac{7}{6},\frac{7}{6},1)
  & \Longrightarrow & u \not \in \frac{1}{3} \Z^3, \\[2mm]
  v = (\frac{4}{3},\frac{1}{3},1),\ w = (\frac{1}{3},-\frac{2}{3},1)
  & \Longrightarrow & u = (-\frac{2}{3},\frac{4}{3},1),
  & & \\[2mm]
  v = (\frac{5}{3},\frac{2}{3},1),\ w = (\frac{1}{3},-\frac{2}{3},1)
  & \Longrightarrow & u = (-\frac{5}{9},\frac{10}{9},1)
  & \Longrightarrow & u \not \in \frac{1}{3} \Z^3, \\[2mm]
  v = (\frac{4}{3},\frac{2}{3},1),\ w = (\frac{1}{3},-\frac{4}{3},1)
  & \Longrightarrow & u = (-\frac{4}{15},\frac{16}{15},1)
  & \Longrightarrow & u \not \in \frac{1}{3} \Z^3, \\[2mm]
  v = (\frac{4}{3},\frac{1}{3},1),\ w = (\frac{2}{3},-\frac{1}{3},1)
  & & & \Longrightarrow & F' \mbox{ is no triangle}, \\[2mm]
  v = (\frac{5}{3},\frac{2}{3},1),\ w = (\frac{2}{3},-\frac{1}{3},1)
  & \Longrightarrow & u = (-\frac{10}{3},\frac{5}{3},1)
  & \Longrightarrow & (-1,1,1) \in \relintt(F'), \\[2mm]
  v = (\frac{4}{3},\frac{2}{3},1),\ w = (\frac{2}{3},-\frac{2}{3},1)
  & \Longrightarrow & u = (-\frac{4}{3},\frac{4}{3},1)
  & \Longrightarrow & F' \mbox{ is of type 2}.
\end{array} $
\medskip

In seven of these eight cases, it follows that $F'$ is not a
valid triangle.
In the open case where $v = (\frac{4}{3},\frac{1}{3},1)$, $w
= (\frac{1}{3},-\frac{2}{3},1)$, and $ u =
(-\frac{2}{3},\frac{4}{3},1)$ we infer that $F$ is a
translate of $\conv(\{(2,\frac{1}{2},0), (\frac{1}{2},-1,0),
(-1,2,0)\})$.
However, such a translate does never have all three vertices
integer.
\end{proof}

In the following we assume that $a_3 \geq 4$.
Our proof consists of the following steps.
Firstly, we construct all bases which are possible for such
a simplex $P \in \M$.
Secondly, we argue that all simplices $P \in \M$ satisfy
$a_3 \leq 12$.
This gives a finite set of simplices that need to be checked
for maximal lattice-freeness.
Finally, the ultimate list of maximal lattice-free simplices
is obtained by computer enumeration.

By Lemma \ref{2ineq}, all integral triangles $T$ in the
plane with $w(T) = 2$ and $(i(T),b(T)) \in Z_S$ are
potential bases for a maximal lattice-free simplex $P \in
\M$ with $a_3 \ge 4$.
From \eqref{bases.bound}, it follows that $2 i(F) + b(F)
\leq 6$ for $a_3 \geq 11$ and therefore $(i(F),b(F)) =
(1,3)$ or $(i(F),b(F)) = (1,4)$.
If $(i(F),b(F)) = (1,3)$, then $F$ is, up to a unimodular
transformation, the triangle shown in Figure~\ref{tria1-5}.
In Lemma \ref{simpl.kite} we shall show that $a_3 \leq 12$
in this case since otherwise $P$ is not lattice-free.
If $(i(F),b(F)) = (1,4)$, then $F$ is, up to a unimodular
transformation, the triangle shown in Figure~\ref{tria1-3}.
In Lemma \ref{simpl.sail} we shall show that $a_3 \leq 8$ in
this case since otherwise $P$ is not lattice-free.
Thus, we can use computer enumeration to find all simplices
$P \in \M$.

\begin{lemma} \label{triangles.i=2,i=3}
Let $T \subseteq \R^2$ be an integral triangle with $w(T) =
2$, $i(T) = 2$, and $b(T) \in \{3,4,5,6\}$.
Then, up to a unimodular transformation, $T$ is one of the
triangles depicted in Figure~\ref{triangles.i=2}.
\begin{figure}[ht]
  \centering
  \subfigure[\label{tria2-1}]{\includegraphics[height=1.5cm]{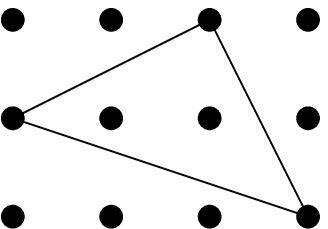}}
  \qquad
  \subfigure[\label{tria2-2}]{\includegraphics[height=1.5cm]{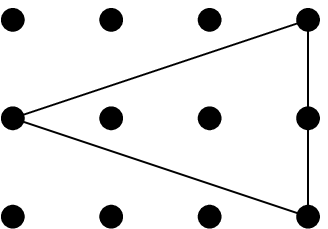}}
  \qquad
  \subfigure[\label{tria2-3}]{\includegraphics[height=1.5cm]{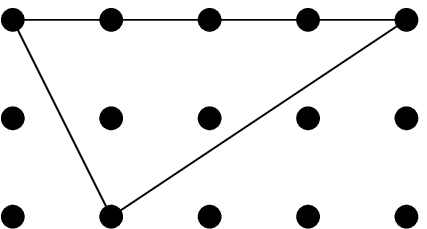}}
  \caption{All integral triangles $T$ with $w(T) = 2$, $i(T)
    = 2$, and $b(T) \in \{3,4,5,6\}$}
  \label{triangles.i=2}
\end{figure}

\end{lemma}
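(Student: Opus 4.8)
The plan is to follow the scheme of the proof of Lemma~\ref{i=2,i=3}, only with a triangle in place of a quadrilateral; the case analysis turns out to be somewhat shorter.

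\emph{Normalization.} First I would fix a primitive vector $u$ with $w(T,u)=w(T)=2$ and apply a unimodular transformation sending $u$ to $e_2$, so that $T\subseteq\R\times[\alpha,\alpha+2]$ for some $\alpha\in\R$. Since $T$ is integral, the extreme values of the second coordinate on $T$ are integers; if the vertical extent of $T$ were less than $2$, then $T$ would lie in a strip $\R\times[m,m+1]$ with $m\in\Z$, forcing $i(T)=0$, a contradiction. Hence the vertical extent is exactly $2$ and, after a vertical translation, $T\subseteq\R\times[0,2]$ with both lines $y=0$ and $y=2$ meeting $T$. The only interior integer points of $T$ then lie on the line $y=1$; since there are exactly $i(T)=2$ of them and the relative interior of $T\cap\{y=1\}$ is an open segment, they are two consecutive lattice points, and after a shear $(x,y)\mapsto(x+ky,y)$ and a horizontal translation (both preserving the strip) I may assume they are $(0,1)$ and $(1,1)$. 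In particular $(-1,1)$ and $(2,1)$ are \emph{not} interior points of $T$.

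\emph{Case analysis.} I would then distinguish cases by the multiset of second coordinates of the three vertices of $T$. As each vertex is integral and lies in $\R\times[0,2]$, and both $y=0$ and $y=2$ are attained, this multiset is one of $\{0,0,2\}$, $\{0,2,2\}$, $\{0,1,2\}$; the second is equivalent to the first under $y\mapsto 2-y$. In each remaining case $T\cap\{y=1\}$ is a segment whose two endpoints are explicit functions of the vertices (one endpoint being the midpoint of the edge joining the vertices at heights $0$ and $2$), and, after possibly reflecting in $x$, the requirements that $(0,1),(1,1)$ be interior and $(-1,1),(2,1)$ not interior become a short list of linear inequalities in the $x$-coordinates of the vertices. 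Together with Pick's formula, which here reads $\area(T)=1+\frac{b(T)}{2}\in\{\frac{5}{2},3,\frac{7}{2},4\}$, these inequalities leave only finitely many candidates; a direct check shows that the admissible ones realize precisely $(i(T),b(T))\in\{(2,3),(2,4),(2,6)\}$ and yield, up to a unimodular transformation, the three triangles of Figure~\ref{triangles.i=2} (in particular no triangle with $b(T)=5$ occurs). To close, one verifies $w(T)=2$ for each of the three, which is immediate once coordinates are fixed.

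\emph{Main obstacle.} The one delicate point is the very last step of the case analysis: after the inequalities and Pick's formula pin down the vertex coordinates only up to the residual normalizing symmetries (reflections in $x$, the map $y\mapsto 2-y$, and the remaining translational freedom), one must check that the various admissible coordinate tuples all lie in a single unimodular class, so that exactly three triangles appear and none is counted twice. As in Lemma~\ref{i=2,i=3}, I expect this bookkeeping, rather than any single geometric idea, to be the crux.
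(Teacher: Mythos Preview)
Your approach is correct and essentially the same as the paper's: both normalize so that the width-$2$ direction is vertical and the two interior lattice points sit on the middle lattice line, then combine Pick's formula with the constraint that exactly those two points (and not their neighbours) lie in the interior. The only difference is organizational---the paper splits first by $b(T)\in\{3,4,5,6\}$ and then locates the vertices relative to the middle line, whereas you split first by the multiset of vertex heights ($\{0,0,2\}$ up to $y\mapsto 2-y$, versus $\{0,1,2\}$) and then read off $b(T)$ from Pick; both routes use the residual shear freedom in the same way and arrive at the same three triangles with $b(T)\in\{3,4,6\}$.
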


\begin{proof}
Let $T$ be an integral triangle in the plane satisfying
$w(T) = 2$ and $i(T) = 2$.
We divide the proof according to the number of integer
points on the boundary of $T$.

\emph{Case 1:} $i(T) = 2$ and $b(T) = 3$. 
Without loss of generality we assume that the two interior
integer points are placed at $(1,0)$ and $(2,0)$.
This implies that for any $u \in \Z^2 \setminus \{o, \pm
e_2\}$ we have $w(T,u) \geq 3$ and therefore it must hold
$v_2 \in \{0, \pm 1\}$ for each vertex $v = (v_1, v_2)$ of
$T$.
Observe that exactly one vertex of $T = \conv(\{a,b,c\})$
lies on the line $y = 0$, say $a = (0,0)$.
Let the remaining two vertices $b$ and $c$ satisfy $b_2 = 1
= - c_2$.
Using an appropriate unimodular transformation we can assume
that $b = (0,1)$.
For convexity reasons it follows that $c = (5,-1)$ which
leads to the triangle shown in Figure~\ref{tria2-1}.

\emph{Case 2:} $i(T) = 2$ and $b(T) = 4$.
Pick's formula gives $\area(T) = 3$ in this case.
Placing the two interior integer points of $T$ at $(1,0)$
and $(2,0)$ as above implies again that $v_2 \in \{0, \pm
1\}$ for each vertex $v = (v_1, v_2)$ of $T$.
Let $T = \conv(\{a,b,c\})$.
Clearly, we cannot have two vertices on the line $y = 0$.
If none of the vertices is on the line $y = 0$, then assume
without loss of generality that $a_2 = b_2 = 1 = - c_2$.
It follows that either $b_1 = a_1 + 2$ with $\area(T) = 2$,
or $b_1 = a_1 + 1$ with $\area(T) = 1$.
In both cases this is a contradiction to Pick's formula.
Thus, exactly one vertex lies on the line $y = 0$, say $a =
(0,0)$.
Let the remaining two vertices $b$ and $c$ satisfy $b_2 = 1
= - c_2$.
As above, we can assume that $b = (0,1)$ which implies $c =
(6,-1)$.
This gives the triangle shown in Figure~\ref{tria2-2}.

\emph{Case 3:} $i(T) = 2$ and $b(T) = 5$.
Pick's formula gives $\area(T) = 3{.}5$.
Placing the two interior integer points of $T$ at $(1,0)$
and $(2,0)$ as above implies again that $v_2 \in \{0, \pm
1\}$ for each vertex $v = (v_1, v_2)$ of $T$.
Clearly, we cannot have two vertices on the line $y = 0$.
If no vertex of $T$ lies on the line $y = 0$, then with
similar arguments as above we infer that $\area(T) \leq 3$,
a contradiction to Pick's formula.
Thus, precisely one vertex of $T = \conv(\{a,b,c\})$ lies on
the line $y = 0$, say $a = (0,0)$.
Without loss of generality let $b_2 = 1 = - c_2$.
Note that the two edges connecting $a$ and $b$,
resp.~connecting $a$ and $c$, do not have integer points in
their relative interior.
The edge connecting $b$ and $c$ has at most one relative
interior integer point.
Therefore, we have at most four integer points on the
boundary of $T$ which is a contradiction to $b(T) = 5$.

\emph{Case 4:} $i(T) = 2$ and $b(T) = 6$.
Pick's formula gives $\area(T) = 4$.
Placing the two interior integer points of $T$ at $(1,0)$
and $(2,0)$ as above implies again that $v_2 \in \{0, \pm
1\}$ for each vertex $v = (v_1, v_2)$ of $T$.
Clearly, we cannot have two vertices on the line $y = 0$.
If exactly one vertex of $T$ lies on the line $y = 0$, say
$a = (0,0)$, then using the same arguments as above we infer
that $T$ has at most four integer points on its boundary, a
contradiction to $b(T) = 6$.
Thus, no vertex of $T$ is on the line $y = 0$.
Without loss of generality let $a_2 = b_2 = 1 = - c_2$.
It follows that $b_1 = a_1 + 4$, otherwise Pick's formula is
violated.
Using an appropriate unimodular transformation, we obtain
$a = (0,1)$, $b = (4,1)$ and $c = (1,-1)$, see
Figure~\ref{tria2-3}.
\end{proof}

From Lemma \ref{2ineq} and Lemma \ref{triangles.i=2,i=3},
it follows that any facet of a simplex $P \in \M$ with
$a_3 \geq 4$ has the structure shown in Figures
\ref{tria1-1}--\ref{tria1-5} and \ref{triangles.i=2}.
Furthermore, inequalities \eqref{bases.bound} imply that
only \ref{tria1-3} and \ref{tria1-5} are possible if $a_3
\geq 11$.
In the following two lemmas we will show that simplices
having those two bases are not lattice-free for $a_3 \geq
13$.
Thus, by computer enumeration over all potential bases and
values for $a_3$ ranging from $4$ to $12$, we obtain a
finite list of simplices.
Screening those which are not maximal lattice-free we end up
with the simplices $M_4$ and $M_5$.

\begin{lemma} \label{simpl.kite}
Let $P \subseteq \R^3$ be a simplex with one facet being
$\conv(\{e_1, e_2, -(e_1 + e_2)\})$ and apex $a = (a_1, a_2,
a_3) \in \Z^3$, where $a_3 \geq 13$. Then, $P$ is not
lattice-free.
\end{lemma}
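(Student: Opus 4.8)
The plan is to exhibit, for every such simplex $P$, one explicit interior lattice point; the natural candidate is $(1,2,3)$ --- the same direction singled out by the prism $M_{10}$, whose base is the triangle $F:=\conv(\{e_1,e_2,-(e_1+e_2)\})$ appearing here.

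First I would normalize. A shear $(x_1,x_2,x_3)\mapsto(x_1-k_1x_3,\,x_2-k_2x_3,\,x_3)$ fixes the plane $\R^2\times\{0\}\supseteq F$ pointwise and sends the apex to $(a_1-k_1a_3,\,a_2-k_2a_3,\,a_3)$, while the coordinate swap $x_1\leftrightarrow x_2$ is a symmetry of $F$; hence I may assume $0\le a_1\le a_2<a_3$. Computing the barycentric coordinates of a point $x$ with respect to the four vertices $a,e_1,e_2,-(e_1+e_2)$ of $P$ gives that $\Z^3\cap\intr(P)$ consists exactly of the integer $x=(x_1,x_2,x_3)$ with $x_3\in\{1,\dots,a_3-1\}$ satisfying
\begin{align*}
  a_3x_1+a_3x_2-(a_1+a_2-1)x_3 &< a_3, \\
  -2a_3x_1+a_3x_2-(a_2-2a_1-1)x_3 &< a_3, \\
  a_3x_1-2a_3x_2-(a_1-2a_2-1)x_3 &< a_3.
\end{align*}

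Next I would use lattice-freeness of $P$ by forbidding the small interior points $(1,1,1)$, $(0,0,1)$, $(0,1,1)$. Substituting them into the system above and demanding that each one fails to lie in $\intr(P)$ shows, respectively, that $a_1+a_2\le a_3+1$; that $2a_1-a_2+1\ge a_3$ or $2a_2-a_1+1\ge a_3$, which because $a_1\le a_2$ forces $2a_2-a_1\ge a_3-1$; and --- provided $a_1+a_2>1$ --- that $a_2\le 2a_1+1$. The two remaining apices $a\in\{(0,0,a_3),(0,1,a_3)\}$ are disposed of at once, since for them $(0,0,1)\in\intr(P)$ already. From $a_2\le 2a_1+1$ one gets $a_1\ge(a_2-1)/2$, and feeding this into $2a_2-a_1\ge a_3-1$ and $a_1+a_2\le a_3+1$ yields $\frac{a_3-3}{3}\le a_1\le\frac{a_3+3}{3}$, $\frac{2a_3-3}{3}\le a_2\le\frac{2a_3+6}{3}$, and in particular $a_1+a_2\ge a_3-2$; so $a$ is close to the ray $\R_{\ge 0}(1,2,3)$, as anticipated.

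Finally I would check that $x=(1,2,3)$ lies in $\intr(P)$. Since $a_3\ge13\ge4$ we have $x_3=3\in\{1,\dots,a_3-1\}$, and the three inequalities of the system become $3(a_1+a_2)>2a_3+3$, $\;6a_1-3a_2+3<a_3$, and $6a_2-3a_1+3<4a_3$. The first two are immediate from the bounds of the previous paragraph: $3(a_1+a_2)\ge3(a_3-2)=3a_3-6>2a_3+3$, and $6a_1-3a_2+3\le(2a_3+6)-(2a_3-3)+3=12<a_3$ (this last step is exactly where $a_3\ge13$ is needed). The third, $6a_2-3a_1+3<4a_3$, is the delicate point, because the crude bounds only give $6a_2-3a_1+3\le3a_3+18$, which beats $4a_3$ only for $a_3>18$; here I would argue by contradiction instead. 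If $6a_2-3a_1+3\ge4a_3$, i.e.\ $2a_2-a_1\ge\frac{4a_3-3}{3}$, then combining with $a_1\ge(a_2-1)/2$ gives $a_2\ge\frac{8a_3-9}{9}$, whereas combining $a_2\le 2a_1+1$ with $a_1+a_2\le a_3+1$ gives $a_2\le\frac{2a_3+3}{3}$; together these force $a_3\le9$, contradicting $a_3\ge13$. Hence all three inequalities hold, so $(1,2,3)\in\Z^3\cap\intr(P)$ and $P$ is not lattice-free. I expect this last contradiction --- recognizing that the third inequality cannot fail --- to be the only real obstacle; everything else is routine bookkeeping with linear inequalities.
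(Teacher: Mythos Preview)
Your proof is correct and follows essentially the same approach as the paper's: after the same normalization you derive the same three linear constraints $a_1+a_2\le a_3+1$, $a_2\le 2a_1+1$, $2a_2-a_1\ge a_3-1$ from excluding $(1,1,1)$, $(0,1,1)$, $(0,0,1)$, and then verify $(1,2,3)\in\intr(P)$. The only differences are bookkeeping: the paper first isolates $a_1\ge 4$ in order to push through a direct chain of inequalities for the hardest of the three checks, whereas you bypass that step by deriving explicit ranges for $a_1,a_2$ and disposing of the recalcitrant inequality by contradiction.
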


\begin{proof}
By applying an appropriate unimodular transformation we can
assume that $0 \leq a_i < a_3$ for $i = 1,2$.
By symmetry, we assume $a_1 \leq a_2$.
We now set up the facet description of $P$ which is only
dependent on the parameters $a_1$, $a_2$, and $a_3$.
It follows that $\Z^3 \cap \intt(P)$ is the set of all
$x = (x_1, x_2, x_3) \in \Z^3$ satisfying
\begin{align*}
  a_3 x_1 - 2 a_3 x_2 + (1 + 2a_2 - a_1) x_3 & < a_3, &
  a_3 x_1 + a_3 x_2 + (1 - &a_1 - a_2) x_3 < a_3, \\
  a_3 x_2 - 2 a_3 x_1 + (1 + 2a_1 - a_2) x_3 & < a_3, &
  x_3 \in \{1,\ldots, &\,a_3 - 1\}.
\end{align*}
From these inequalities, it follows that
\begin{equation} \label{(1,1,1)}
  a_3 + 1 \geq a_1 + a_2,
\end{equation}
since otherwise $(1,1,1) \in \intt(P)$.
Assume $a_1 = 0$.
If $a_2 \leq 1$, we have $(0,0,1) \in \intt(P)$, otherwise
$(0,1,1) \in \intt(P)$.
Therefore, we must have $a_1 \geq 1$.
It follows that
\begin{equation} \label{(0,1,1)}
  2a_1 + 1 \geq a_2,
\end{equation}
since otherwise $(0,1,1) \in \intt(P)$.
Observe that $(0,0,1) \in \intt(P)$ if and only if $a_1 +
a_3 - 2a_2 > 1$ and $a_2 + a_3 - 2a_1 > 1$.
Assume $a_1 \leq 3$.
Then $(0,0,1) \in \intt(P)$:
$a_1 + a_3 - 2a_2 \stackrel{\eqref{(0,1,1)}}{\geq} a_3 -
3a_1 - 2 \geq 2 > 1$;
$a_2 + a_3 - 2a_1 = (a_2 - a_1) + a_3 - a_1 \geq a_3 - a_1
\geq 10 > 1$.
Thus, we have $a_1 \geq 4$.
Using \eqref{(1,1,1)} this implies $a_3 \geq a_2 + 3$ and
therefore $a_2 + a_3 - 2a_1 \geq 2(a_2 - a_1) + 3 > 1$.
Hence, we have
\begin{equation} \label{(0,0,1)}
  2a_2 + 1 \geq  a_1 + a_3,
\end{equation}
since otherwise $(0,0,1) \in \intt(P)$.
However, using inequalities \eqref{(1,1,1)}--\eqref{(0,0,1)}
it can now be shown that $(1,2,3) \in \intt(P)$:
\begin{align*}
  (1,2,3) \in \intt(P) \qquad \Longleftrightarrow \qquad
      3&a_1 + 3a_2 - 2a_3 > 3, \\
      3&a_1 - 6a_2 + 4a_3 > 3, \\
    -~6&a_1 + 3a_2 + \ \, a_3 > 3.
\end{align*}
$3a_1 + 3a_2 - 2a_3 \stackrel{\eqref{(0,0,1)}}{\geq} 5a_1 - a_2 - 2
\stackrel{\eqref{(0,1,1)}}{\geq} 3a_1 - 3 = 3(a_1  -1) > 3$;
$3a_1 - 6a_2 + 4a_3 \stackrel{\eqref{(1,1,1)}}{\geq} 7a_1 - 2a_2 - 4
\stackrel{\eqref{(0,1,1)}}{\geq} 3(a_1 - 2) > 3$;
$-6a_1 + 3a_2 + a_3 \stackrel{\eqref{(1,1,1)}}{\geq} -5a_1 + 4a_2 - 1
\stackrel{\eqref{(0,0,1)}}{\geq} 2a_3 - 3a_1 - 3
\stackrel{\eqref{(1,1,1)}}{\geq} 2a_2 - a_1 - 5
\stackrel{\eqref{(0,0,1)}}{\geq} a_3 - 6 > 3$.
\end{proof}

\begin{lemma} \label{simpl.sail}
Let $P \subseteq \R^3$ be a simplex with one facet being
$\conv(\{\pm e_1, 2e_2\})$ and apex $a = (a_1, a_2, a_3) \in
\Z^3$, where $a_3 \geq 9$. Then, $P$ is not lattice-free.
\end{lemma}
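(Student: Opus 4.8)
The plan is to follow the same template as the proof of Lemma~\ref{simpl.kite}: write down the halfspace description of $P$ as a function of the apex $a=(a_1,a_2,a_3)$, squeeze out necessary inequalities from lattice-freeness by inspecting a handful of small candidate interior points, and then exhibit an explicit interior lattice point to reach a contradiction once $a_3$ is large.

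First I would normalize. The maps $(x_1,x_2,x_3)\mapsto(x_1+k_1x_3,x_2+k_2x_3,x_3)$ with $k_1,k_2\in\Z$ are unimodular and fix the base $\conv(\{\pm e_1,2e_2\})$ pointwise while sending the apex to $(a_1+k_1a_3,a_2+k_2a_3,a_3)$; since ``$P$ is lattice-free'' is a unimodular invariant, I may assume $0\le a_i<a_3$ for $i=1,2$. Then, besides the base inequality $x_3\ge0$, each edge of the base spans one lateral facet with the apex, and a routine cross-product computation (fixing signs by testing the base-interior point $(0,1,0)$, and noting that the apex lies on all three slanted hyperplanes) gives
\begin{align*}
  \Z^3\cap\intr(P)=\Bigl\{x\in\Z^3:\ & -a_3x_2+a_2x_3<0,\quad 2a_3x_1+a_3x_2+(2-2a_1-a_2)x_3<2a_3,\\
  & -2a_3x_1+a_3x_2+(2a_1+2-a_2)x_3<2a_3,\quad x_3\in\{1,\dots,a_3-1\}\Bigr\}.
\end{align*}

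I would then test $(0,1,1)$ and $(1,1,1)$. For $a_3$ large and $0\le a_i<a_3$ the first inequality is automatically satisfied at both points, so the slanted inequalities yield $(0,1,1)\in\intr(P)\iff a_2+a_3>2a_1+2$ and $(1,1,1)\in\intr(P)\iff 2a_1+a_2>a_3+2$. Hence lattice-freeness of $P$ forces $a_2+a_3\le2a_1+2$ and $2a_1+a_2\le a_3+2$, and adding these gives $a_2\le2$. Now test $(1,1,2)$: membership in $\intr(P)$ amounts to $a_3>2a_2$, $4a_1+2a_2>a_3+4$, $4a_1+4-2a_2<3a_3$ and $a_3\ge3$. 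The first holds since $2a_2\le4<a_3$ and the last is trivial; from $a_2+a_3\le2a_1+2$ one gets $4a_1+2a_2\ge2a_3-4>a_3+4$ using $a_3\ge9$, and from $2a_1+a_2\le a_3+2$ one gets $4a_1+4-2a_2\le2a_3+8<3a_3$ again using $a_3\ge9$. Thus $(1,1,2)\in\intr(P)$, a contradiction, so $P$ is not lattice-free.

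The substance is pure bookkeeping; the one place needing genuine care is producing the halfspace description with the correct constants and signs and confirming that the ``flat'' inequality $-a_3x_2+a_2x_3<0$ never obstructs any of the test points $(0,1,1),(1,1,1),(1,1,2)$ once $a_3\ge9$. Note that the two extracted inequalities are jointly satisfiable for infinitely many $a$ (e.g.\ $a=(4,0,9)$), so no separate enumeration of residual cases is required — the single point $(1,1,2)$ disposes of all of them — and the value $9$ in the hypothesis is exactly the threshold at which the estimates $2a_3-4>a_3+4$ and $2a_3+8<3a_3$ become valid.
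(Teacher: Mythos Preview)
Your proof is correct and follows essentially the same approach as the paper: normalize so that $0\le a_i<a_3$, derive the two constraints $a_2+a_3\le 2a_1+2$ and $2a_1+a_2\le a_3+2$ from lattice-freeness at $(0,1,1)$ and $(1,1,1)$, add them to get $a_2\le 2$, and then verify that $(1,1,2)\in\intr(P)$. Your final estimates are arranged slightly differently but are equivalent to the paper's, and your remark that $a_3\ge 9$ is precisely the threshold for $2a_3-4>a_3+4$ and $2a_3+8<3a_3$ matches the paper's computations.
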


\begin{proof}
By applying an appropriate unimodular transformation we can
assume that $0 \leq a_i < a_3$ for $i = 1,2$.
We now set up the facet description of $P$ which is only
dependent on the parameters $a_1$, $a_2$, and $a_3$.
It follows that $\Z^3 \cap \intt(P)$ is the set of all
$x = (x_1, x_2, x_3) \in \Z^3$ satisfying
\begin{align*}
   2a_3 x_1 + a_3 x_2 + (2 - 2a_1 - a_2) x_3 & < 2a_3, &
            - a_3 x_2 +              a_2 x_3 < 0, \\
  -2a_3 x_1 + a_3 x_2 + (2 + 2a_1 - a_2) x_3 & < 2a_3, &
  x_3 \in \{1, \ldots, a_3 - 1\}.
\end{align*}
From these inequalities we obtain the following
equivalences:
\begin{itemize} 
  \item $(0,1,1) \in \intt(P)$ if and only if $2a_1 + 2 < a_2 + a_3$;  
  \item $(1,1,1) \in \intt(P)$ if and only if $a_3 + 2 < 2a_1 + a_2$. 
\end{itemize}
This implies that the following inequalities hold:
\begin{eqnarray}
  2a_1 + 2 & \geq & \ \, a_2 + a_3, \label{cond.(0,1,1)} \\
   a_3 + 2 & \geq & 2a_1 + a_2. \label{cond.(1,1,1)}
\end{eqnarray}
Adding \eqref{cond.(0,1,1)} and \eqref{cond.(1,1,1)} yields
$a_2 \leq 2$.
Using \eqref{cond.(0,1,1)}, \eqref{cond.(1,1,1)} and $a_2
\leq 2$ it can be shown that $(1,1,2) \in \intt(P)$:
\begin{align*}
  (1,1,2) \in \intt(P) \qquad \Longleftrightarrow \qquad
      4a_1 + 2&a_2 - \ \, a_3 > 4, \\
    -~4a_1 + 2&a_2 + 3a_3 > 4, \\
           -~2&a_2 + \ \, a_3 > 0.
\end{align*}
$4a_1 + 2a_2 - a_3 \stackrel{\eqref{cond.(0,1,1)}}{\geq}
4a_2 + a_3 - 4 > 4$;
$-4a_1 + 2a_2 + 3a_3 \stackrel{\eqref{cond.(1,1,1)}}{\geq}
4a_2 + a_3 - 4 > 4$;
$-2a_2 + a_3 \stackrel{a_2 \leq 2,\, a_3 \geq 9}{>} 4$.
\end{proof}


\section{Remarks on the computer enumeration} \label{sect:computer-search}

In view of the results in
Sections~\ref{notions}--\ref{four.facets} for proving Theorem~\ref{main.thm} it remains to
verify the following.
\begin{itemize} 
  \item The integral quadrangular pyramids with bases as in
    Figures~\ref{quad1-3}--\ref{quad1-7} and \ref{i=2} of
    height $h$ with $4 \le h \le 10$ are not in $\M$. 
 \item The integral simplices with bases as in
   Figures~\ref{tria1-1}--\ref{tria1-5} and
   \ref{triangles.i=2} of height $h$ with $4 \le h \le 12$
   belonging to $\M$ are equivalent to $M_4$ or $M_5$.
\end{itemize}
This can be done by a computer enumeration which involves
less than $15\,000$ polytopes.


\small

\providecommand{\bysame}{\leavevmode\hbox to3em{\hrulefill}\thinspace}
\providecommand{\MR}{\relax\ifhmode\unskip\space\fi MR }
\providecommand{\MRhref}[2]{%
  \href{http://www.ams.org/mathscinet-getitem?mr=#1}{#2}
}
\providecommand{\href}[2]{#2}


\begin{tabular}{l}
  \\
  \\
  \\
  \\
  \textsc{Gennadiy Averkov} \\
  \textsc{Institute of Mathematical Optimization} \\
  \textsc{Faculty of Mathematics} \\
  \textsc{University of Magdeburg} \\
  \textsc{Universit\"atsplatz 2, 39106 Magdeburg} \\
  \textsc{Germany}        \\
  \emph{e-mail}: \texttt{averkov@math.uni-magdeburg.de} \\
  \emph{web}: \texttt{http://fma2.math.uni-magdeburg.de/$\sim$averkov}
  \\ \\
\end{tabular}

\begin{tabular}{l}
  \textsc{Christian Wagner} \\
  \textsc{Institute for Operations Research} \\
  \textsc{ETH Z\"urich} \\
  \textsc{R\"amistrasse 101, 8092 Z\"urich} \\
  \textsc{Switzerland}        \\
  \emph{e-mail}: \texttt{christian.wagner@ifor.math.ethz.ch} \\
  \emph{web}:
  \texttt{http://www.ifor.math.ethz.ch/staff/chwagner}
  \\ \\
\end{tabular}

\begin{tabular}{l}
  \textsc{Robert Weismantel} \\
  \textsc{Institute for Operations Research} \\
  \textsc{ETH Z\"urich} \\
  \textsc{R\"amistrasse 101, 8092 Z\"urich} \\
  \textsc{Switzerland}        \\
  \emph{e-mail}: \texttt{robert.weismantel@ifor.math.ethz.ch} \\
  \emph{web}: \texttt{http://www.ifor.math.ethz.ch/staff/weismant}
\end{tabular}

\end{document}